\newtheorem{theorem}{Theorem}[section]
\newtheorem{proposition}[theorem]{Proposition}
\newtheorem{lemma}[theorem]{Lemma}
\newtheorem{corollary}[theorem]{Corollary}
\theoremstyle{definition}
\newtheorem{definition}[theorem]{Definition}
\newtheorem{remark}[theorem]{Remark}
\DeclareMathOperator{\CG}{\mathcal G}
\DeclareMathOperator{\CT}{\mathcal T}
\DeclareMathOperator{\CO}{\mathcal O}
\DeclareMathOperator{\Mod}{- \, Mod}
\DeclareMathOperator{\Gr}{- \, Gr}
\DeclareMathOperator{\Ind}{Ind}
\DeclareMathOperator{\Res}{Res}
\DeclareMathOperator{\Hom}{Hom}
\DeclareMathOperator{\HOM}{HOM}
\DeclareMathOperator{\Ker}{Ker}
\DeclareMathOperator{\Bco}{{\it B}^{co}}
\DeclareMathOperator{\Balpha}{{\it B}^{co}_\alpha}
\DeclareMathOperator{\Bgamma}{{\it B}^{co}_\gamma}
\DeclareMathOperator{\Id}{Id}
\DeclareMathOperator{\1}{\bf {1}}
\def\Z{{\mathbb Z}}
\begin{document}

\title{On induced graded simple modules over graded Steinberg algebras with applications to Leavitt path algebras}

\author{Nguyen Quang Loc \thanks{Corresponding author: N. Q. Loc, Department of Mathematics and Computer Science, Hanoi National University of Education, 136 Xuan Thuy, Hanoi, Vietnam. Email: nqloc@hnue.edu.vn} \quad and \quad  Nguyen Bich Van \thanks{N. B. Van, Institute of Mathematics, Vietnam Academy of Science and Technology, 18 Hoang Quoc Viet, Hanoi, Vietnam. Email: nbvan@math.ac.vn}}

\date{}

\maketitle

\begin{abstract}
For an ample groupoid $\CG$ and a unit $x$ of $\CG$, Steinberg constructed the induction and  restriction functors between the category of modules over the Steinberg algebra $A_R(\CG)$ and the category of modules over the isotropy group algebra $R\CG_x$. In this paper, we prove a graded version of these functors and related results for the graded Steinberg algebra of a graded ample groupoid. As an application, the spectral simple and graded simple modules over the Leavitt path algebra $L_K(E)$ are classified. In particular, we show that many of previously known simple and graded simple $L_K(E)$-modules, including the Chen simple modules, are induced from (graded or non-graded) simple modules over isotropy group algebras.
\end{abstract}

\small

{\bf Keywords:}  Graded groupoid, Steinberg algebra, Leavitt path algebra, Graded simple module,  Chen simple module

{\bf 2010 MSC:} 16S99, 16W50, 16D60

\normalsize

\section{Introduction}	\label{1}

Steinberg algebras were introduced independently by Steinberg in \cite{S1} and by Clark et al. in \cite{CFSM} as algebraic analogues of Renault's groupoid $C^*$-algebras \cite{Renault}. Given an ample groupoid $\CG$ and a commutative ring $R$, the Steinberg algebra $A_R(\CG)$ is a convolution algebra of functions from $\CG$ to $R$. Just as groupoid $C^*$-algebras include the classes of graph $C^*$-algebras and higher-rank graph $C^*$-algebras, Steinberg algebras include Leavitt path algebras \cite{AA, AMP} and Kumjian-Pask algebras \cite{ACHR, CP}, which are algebraisation of graph $C^*$-algebras and their higher-rank analogues. Moreover, inverse semigroup algebras and partial skew group rings can also be realised as Steinberg algebras. The theory of Steinberg algebras provides a new, effective and unifying approach to investigate these different classes of algebras. For instance, the Steinberg algebra model has been used to characterise various properties of Leavitt path algebras, Kumjian-Pask algebras or inverse semigroup algebras in a series of papers \cite{CHR, CMMS1, CMMS2, S3, S4, S5}. Inspired by these results, we present in this article another application of Steinberg algebras in studying (graded) simple modules over Leavitt path algebras.

Given a directed graph $E$ and a field $K$, the Leavitt path algebra $L_K(E)$ was introduced and initially studied in \cite{AA, AMP}. The theory of Leavitt path algebras has experienced a tremendous growth in the last fifteen years; we refer to \cite{AAS} for general terminology and results on Leavitt path algebras (see also a brief account in Section~\ref{4}). However, their module theory is still at an early stage. We are interested in simple modules, or equivalently, irreducible representations, of $L_K(E)$. In \cite{GR}, the concept of an $E$-algebraic branching system was introduced to construct various representations of $L_K(E)$. Following this, Chen \cite{Chen} constructed two classes of irreducible representations of $L_K(E)$ using the sinks and infinite paths in $E$. He also showed that these representations associate to some $E$-algebraic branching systems, and studied representations twisted by scaling the action. Later, Ara and Rangaswamy \cite{AR}, and Rangaswamy \cite{R1}, introduced additional classes of simple modules constructed from infinite emitters, and extended the twisting of Chen. All these modules are now called the Chen simple modules. An important result states that any primitive ideal of $L_K(E)$ is the annihilator of some Chen simple module \cite[Theorem 3.9]{AR}.

Leavitt path algebras are equipped with a canonical $\Z$-grading. This $\Z$-graded structure was utilised in the very first paper on the subject \cite{AA} and has turned into an indispensable tool in investigating Leavitt path algebras. In \cite{HR}, graded simple modules of $L_K(E)$ were studied using the concept of graded $E$-algebraic branching systems. The authors constructed a graded simple $L_K(E)$-module which is not simple, and showed that the Chen simple modules corresponding to rational paths are not graded simple (the other Chen simple modules are graded). Leavitt path algebras are prominent examples of graded Steinberg algebras and likewise, graded Steinberg algebras arising from graded ample groupoids and their graded representations have been intensively studied; see, for instance, \cite{AHLS, CEP, CHR, HL}.

In the seminal paper \cite{S1}, Steinberg defined the induction functor $\Ind_x$ from the category of $R\CG_x$-modules to the category of  $A_R(\CG)$-modules, where $x$ is a unit in an ample groupoid $\CG$ and $\CG_x$ is the isotropy group at $x$ (see Section~\ref{2} for definitions). He also constructed the restriction functor $\Res_x$, which is right adjoint to $\Ind_x$. In some aspects, these functors resemble the classical induction and restriction functors in the representation theory of groups. They allow to investigate modules over $A_R(\CG)$ via modules over isotropy groups, which could be better understood. In particular, Steinberg proved that $\Ind_x$ sends simple $R\CG_x$-modules to simple $A_R(\CG)$-modules, and identified all simple $A_R(\CG)$-modules arisen as induced modules. It turns out that they are isomorphic to {\it spectral} simple modules, that is, the simple modules whose restrictions to various isotropy groups do not all vanish \cite[Theorem 7.26]{S1}. 

After reviewing some basic definitions in Section~\ref{2}, we show that, in case of graded ample groupoids, $\Ind_x$ and $\Res_x$ are functors between the categories of graded modules. Along with related results, we obtain a description of spectral graded simple modules over $A_R(\CG)$ (Theorem~\ref{main1}). This extends \cite[Theorem 7.26]{S1} to the case of graded simple modules. Unlike the non-graded case, inducing graded modules from the same orbit may result in non-isomorphic graded modules. The shift of the gradings plays an important role here. These results are presented in Section~\ref{3}.

In Section~\ref{4}, we apply the obtained results to study graded and non-graded simple modules over the  Leavitt path algebra $L_K(E)$. For the ample groupoid associated to a graph, the isotropy groups are known: the infinite cyclic group or the trivial group. In both cases, simple and graded simple modules over the corresponding group algebra are clear. This enables us to prove that the (graded) simple $L_K(E)$-modules in \cite{Chen, AR, R1, HR} mentioned above, including the Chen simple modules and (graded) minimal left ideals, are naturally isomorphic to induced modules of (graded) simple modules over isotropy groups. We also classify all spectral simple and graded simple modules over $L_K(E)$ (Theorems~\ref{main2} and \ref{main3}), including the finite-dimensional ones. Specialising to the case of a row-finite graph $E$, we describe in Corollaries~\ref{row1} and \ref{row2} the finite-dimensional simple and graded simple $L_K(E)$-modules (a similar result for finite-dimensional simple modules is also obtained in \cite{KO} by a completely different method).

\section{Preliminaries} \label{2}

We recall below some of the needed basic concepts and results. Throughout this paper, $R$ denotes a commutative unital ring. All modules are assumed to be left modules, unless otherwise stated.

\subsection{Graded algebras and graded modules}

Let $\Gamma$ be a group. An $R$-algebra $A$ is called $\Gamma$-{\it graded} if $A = \bigoplus_{\gamma \in \Gamma} A_{\gamma}$, where each $A_{\gamma}$ is an $R$-submodule of $A$ and $A_{\gamma}A_{\delta} \subseteq A_{\gamma\delta}$ for all $\gamma, \delta \in \Gamma$. Then $A_{\gamma}$ is called the $\gamma$-{\it homogeneous component} of $A$, and its nonzero elements are called {\it homogeneous of degree $\gamma$}. If the algebra $A$ is $\Gamma$-graded and $\Gamma$ is clear from context, we say simply that $A$ is a graded algebra. A {\it graded homomorphism} of $\Gamma$-graded algebras is an algebra homomorphism $f: A \to B$ such that $f(A_{\gamma}) \subseteq B_{\gamma}$ for all $\gamma \in \Gamma$. If such a homomorphism is bijective, then we write $A \cong_{gr} B$, and say that $A$ and $B$ are {\it  graded isomorphic}.

Let $M$ be a (left) $A$-module. Then $M$ is said to be {\it unital} if $AM = M$. If $A$ is a $\Gamma$-graded $R$-algebra,  $M$ is called a {\it graded $A$-module} if $M = \bigoplus_{\gamma \in \Gamma} M_{\gamma}$, where each $M_\gamma$ is an $R$-submodule of $M$ and $A_{\delta}M_{\gamma} \subseteq M_{\delta\gamma}$ for all $\gamma, \delta \in \Gamma$. A graded homomorphism between graded $A$-modules is an $A$-module homomorphism $f: M \to N$ such that $f(M_\gamma)\subseteq N_\gamma$ for all $\gamma \in \Gamma$. We denote by $A \Mod$ the category of unital $A$-modules and by $A \Gr$ the category of unital graded $A$-modules with graded homomorphisms. The notions such as graded submodule, graded simple module, etc. are counterparts in $A \Gr$ of the familiar concepts in $A \Mod$. In particular, graded isomorphic modules $M, N$ will be denoted as $M \cong_{gr} N$.

  For a graded $A$-module $M$ and $\alpha \in \Gamma$, the $\alpha$-{\it shifted} graded $A$-module $M(\alpha)$ is 
$$
M(\alpha) = \bigoplus_{\gamma \in \Gamma} M(\alpha)_{\gamma},
$$
where $M(\alpha)_{\gamma} = M_{\gamma\alpha}$. This defines the {\it shift functor } $\CT_{\alpha}: A \Gr \to A \Gr, \; M \mapsto M(\alpha)$, which is an auto-equivalence of the category $A \Gr$. Observe that $\CT_{\alpha}\CT_{\beta} = \CT_{\alpha\beta}$ for all $\alpha, \beta \in \Gamma$. 

Let $A$ be a $\Gamma$-graded $R$-algebra, $M = \bigoplus_{\gamma \in \Gamma} M_\gamma$ and $N = \bigoplus_{\gamma \in \Gamma} N_\gamma$ be graded $A$-modules. For each $\alpha \in \Gamma$, let
$$
\HOM_A(M, N)_{\alpha} = \{f \in \Hom_A(M, N) \mid f(M_\gamma) \subseteq N_{\gamma\alpha} \text{ for all $\gamma \in \Gamma$}\}
$$
and
$$
\HOM_A(M, N) = \bigoplus_{\alpha \in \Gamma} \HOM_A(M, N)_{\alpha}. 
$$
In general, $\HOM_A(M, N)$ is an $R$-submodule of $\Hom_A(M, N)$. We also observe that, as $R$-modules,  
\begin{equation} \label{shift}
\HOM_A(M, N)_{\alpha} = \Hom_{A \Gr}(M, N(\alpha)) = \Hom_{A \Gr}(M(\alpha^{-1}), N).
\end{equation}
The following results are known; see, for example, \cite[Chapter 9]{K}.

\begin{lemma} \label{hom}
Let $A, B$ be $\Gamma$-graded $R$-algebras, $M$ a graded $A$-$B$-bimodule and $N$ a graded left $A$-module. Then $\HOM_A(M, N)$ is a graded left $B$-module. In addition, if $M$ is finitely generated as an $A$-module, then $\HOM_A(M, N) = \Hom_A(M, N)$.
\end{lemma}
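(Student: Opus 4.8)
The plan is to verify directly that $\HOM_A(M,N)$ is closed under the natural left $B$-action, that this action respects the $\Gamma$-grading declared on $\HOM_A(M,N)$, and finally that the inclusion $\HOM_A(M,N) \subseteq \Hom_A(M,N)$ is an equality when $M$ is finitely generated over $A$. First I would recall the $B$-module structure: for $b \in B$ and $f \in \Hom_A(M,N)$, define $(bf)(m) = f(mb)$ for $m \in M$. One checks this is an $A$-module homomorphism using that $M$ is an $A$-$B$-bimodule: $(bf)(am) = f((am)b) = f(a(mb)) = a\,f(mb) = a\,(bf)(m)$. Associativity and unitality of this action follow from the corresponding bimodule axioms, so $\Hom_A(M,N)$ is a left $B$-module and $\HOM_A(M,N)$ is at least an $R$-submodule of it.

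Next I would check that the $B$-action is graded, i.e. $B_\delta \cdot \HOM_A(M,N)_\alpha \subseteq \HOM_A(M,N)_{\delta\alpha}$. Take $b \in B_\delta$ and $f \in \HOM_A(M,N)_\alpha$. For a homogeneous $m \in M_\gamma$ we have $mb \in M_{\gamma\delta}$ since $M$ is a graded bimodule, hence $(bf)(m) = f(mb) \in N_{(\gamma\delta)\alpha} = N_{\gamma(\delta\alpha)}$. Thus $bf$ shifts the $\gamma$-component into the $\gamma(\delta\alpha)$-component, which is exactly the condition for $bf \in \HOM_A(M,N)_{\delta\alpha}$. Combined with the fact that $\HOM_A(M,N) = \bigoplus_{\alpha} \HOM_A(M,N)_\alpha$ is already a direct sum decomposition of it as an $R$-module (by definition), this shows $\HOM_A(M,N)$ is a graded left $B$-module.

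For the second assertion, suppose $M$ is generated as an $A$-module by finitely many elements $m_1,\dots,m_n$, which we may take to be homogeneous, say $m_i \in M_{\gamma_i}$. The inclusion $\HOM_A(M,N) \subseteq \Hom_A(M,N)$ always holds, so I must show every $f \in \Hom_A(M,N)$ decomposes as a finite sum of homogeneous pieces lying in the various $\HOM_A(M,N)_\alpha$. Each $f(m_i) \in N = \bigoplus_\delta N_\delta$ has a finite decomposition $f(m_i) = \sum_{\delta} f(m_i)_\delta$; collecting over all $i$, only finitely many $\alpha \in \Gamma$ arise in the expressions $\delta = \gamma_i \alpha$. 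For each such $\alpha$ define $f_\alpha$ on generators by $f_\alpha(m_i) = f(m_i)_{\gamma_i \alpha}$ and extend $A$-linearly; one must check this is well defined (independent of the chosen presentation in terms of the $m_i$) — this is where the homogeneity of the generators and the gradedness of $M$ and $N$ are used, since a relation $\sum a_i m_i = 0$ can be taken homogeneous and then projects componentwise. Then $f_\alpha \in \HOM_A(M,N)_\alpha$ and $f = \sum_\alpha f_\alpha$ with only finitely many terms, giving $f \in \HOM_A(M,N)$.

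The main obstacle is the well-definedness check in the last step: one needs that the $R$-span of homogeneous relations among the $m_i$ is preserved by the componentwise projection maps, so that setting $f_\alpha(m_i) = f(m_i)_{\gamma_i\alpha}$ genuinely extends to an $A$-module map. This is routine but is the only point where something could go wrong, and it is precisely the place where finite generation together with the grading hypotheses is essential; without finite generation the sum $\sum_\alpha f_\alpha$ need not be finite and the equality fails.
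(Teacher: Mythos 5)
The paper gives no proof of this lemma at all: it is stated as known, with a pointer to \cite[Chapter 9]{K}, so your proposal has to be judged as a self-contained argument, and as such it is essentially correct. The first half is exactly the needed verification: $(bf)(m)=f(mb)$ is $A$-linear because $M$ is an $A$-$B$-bimodule, and your degree computation gives precisely $B_\delta\cdot\HOM_A(M,N)_\alpha\subseteq\HOM_A(M,N)_{\delta\alpha}$, which is the graded-module condition in the paper's convention. For the second half, the well-definedness that you single out as the main obstacle is genuine (extending ``$A$-linearly'' from a generating set is not automatic), but it is completable exactly along the lines you sketch: given $\sum_i a_i m_i=0$, split each $a_i$ into homogeneous components, project the relation onto each degree $\beta$ to get $\sum_{\delta\gamma_i=\beta}a_{i,\delta}m_i=0$, apply $f$ and take the component in $N_{\beta\alpha}$; since $a_{i,\delta}\,f(m_i)_\mu\in N_{\delta\mu}$, only $\mu=\gamma_i\alpha$ contributes, and summing over $\beta$ gives $\sum_i a_i\,f(m_i)_{\gamma_i\alpha}=0$, so $f_\alpha$ is well defined (one also checks, similarly, that $f_\alpha(M_\gamma)\subseteq N_{\gamma\alpha}$). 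A cleaner variant avoids generators-and-relations altogether: for arbitrary $m\in M$ with homogeneous decomposition $m=\sum_\gamma m_\gamma$, set $f_\alpha(m)=\sum_\gamma\bigl(f(m_\gamma)\bigr)_{\gamma\alpha}$, the component of $f(m_\gamma)$ in $N_{\gamma\alpha}$. This is manifestly well defined and $R$-linear, and $A$-linearity follows from $A_\delta N_\beta\subseteq N_{\delta\beta}$ together with uniqueness of homogeneous components; finite generation by homogeneous elements is then used only where you say it is needed, namely to guarantee that all but finitely many $f_\alpha$ vanish, so that $f=\sum_\alpha f_\alpha$ is a finite sum and $f\in\HOM_A(M,N)$.
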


\begin{lemma} \label{tensor}
Let $A, B$ be $\Gamma$-graded $R$-algebras, $M$ a graded $A$-$B$-bimodule and $N$ a graded left $B$-module. Then $M \otimes_B N$ is a graded left $A$-module.
\end{lemma}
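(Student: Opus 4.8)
The plan is to realise $M \otimes_B N$ as a graded quotient of a suitable graded $A$-module, mirroring the construction in Lemma~\ref{hom}. First I would equip $M \otimes_R N$ with a $\Gamma$-grading: since tensor products commute with direct sums, $M \otimes_R N = \bigoplus_{\mu, \nu \in \Gamma} (M_\mu \otimes_R N_\nu)$, so putting $(M \otimes_R N)_\gamma = \bigoplus_{\mu \nu = \gamma} M_\mu \otimes_R N_\nu$ defines a direct sum decomposition indexed by $\Gamma$. With the left $A$-action $a \cdot (m \otimes n) = am \otimes n$ one has $A_\delta (M \otimes_R N)_\gamma \subseteq (M \otimes_R N)_{\delta \gamma}$ because $A_\delta M_\mu \subseteq M_{\delta\mu}$; hence $M \otimes_R N$ is a graded left $A$-module.

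Next, recall $M \otimes_B N = (M \otimes_R N)/T$, where $T$ is the submodule generated by all elements $mb \otimes n - m \otimes bn$ with $m \in M$, $b \in B$, $n \in N$; this $T$ is both an $R$-submodule and a left $A$-submodule of $M \otimes_R N$, since $a(mb \otimes n - m \otimes bn) = (am)b \otimes n - (am) \otimes bn$ is again of the generating form. The key point is that $T$ is a \emph{graded} submodule. Writing $m = \sum_\mu m_\mu$, $b = \sum_\beta b_\beta$, $n = \sum_\nu n_\nu$ as sums of homogeneous components gives
$$
mb \otimes n - m \otimes bn = \sum_{\mu, \beta, \nu} \bigl( m_\mu b_\beta \otimes n_\nu - m_\mu \otimes b_\beta n_\nu \bigr),
$$
and each summand is homogeneous of degree $\mu\beta\nu$, because $m_\mu b_\beta \in M_{\mu\beta}$ and $b_\beta n_\nu \in N_{\beta\nu}$, so it lies in $(M \otimes_R N)_{\mu\beta\nu} \cap T$. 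Thus $T$ is generated by homogeneous elements and is therefore a graded $A$-submodule of $M \otimes_R N$.

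Finally, a quotient of a graded module by a graded submodule is again graded: $M \otimes_B N = (M \otimes_R N)/T$ carries the grading $(M \otimes_B N)_\gamma = (M \otimes_R N)_\gamma / \bigl(T \cap (M \otimes_R N)_\gamma\bigr)$, and the residual left $A$-action respects it, so $M \otimes_B N$ is a graded left $A$-module; concretely, $(M \otimes_B N)_\gamma$ is the $R$-span of the images of the simple tensors $m \otimes n$ with $m \in M_\mu$, $n \in N_\nu$ and $\mu\nu = \gamma$. The only step needing any thought is the homogeneity of the generators of $T$, which the displayed identity settles; the remainder is routine bookkeeping with the grading group $\Gamma$. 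Although this is the standard argument (cf.\ \cite[Chapter 9]{K}), the explicit description of the homogeneous components obtained here will be used repeatedly in what follows.
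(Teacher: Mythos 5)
Your proof is correct: grading $M \otimes_R N$ by total degree, checking that the submodule $T$ of $B$-balanced relations is $A$-stable and generated by homogeneous elements, and passing to the quotient is precisely the standard argument that the paper does not spell out, since it cites \cite[Chapter 9]{K} for this lemma. Your explicit description of the homogeneous components of $M \otimes_B N$ (images of simple tensors $m \otimes n$ with $m \in M_\mu$, $n \in N_\nu$, $\mu\nu = \gamma$) is also exactly the form the paper uses later when proving that $\Ind_x(N)$ is a graded $A_R(\CG)$-module.
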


\begin{lemma} \label{adjoint}
Let $A, B$ be $\Gamma$-graded $R$-algebras, $M$ a graded $A$-$B$-bimodule, $N$ a graded left $B$-module and $P$ a graded left $A$-module. Then there is a graded isomorphism of graded $R$-modules
$$
\HOM_A(M \otimes_B N, P) \cong_{gr} \HOM_B(N, \HOM_A(M, P)).
$$
\end{lemma}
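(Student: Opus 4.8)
The plan is to exhibit the classical tensor--hom adjunction isomorphism and then check that it is compatible with the gradings. First recall that, by Lemmas~\ref{tensor} and~\ref{hom}, $M \otimes_B N$ is a graded left $A$-module, $\HOM_A(M, P)$ is a graded left $B$-module, and consequently both $\HOM_A(M \otimes_B N, P)$ and $\HOM_B(N, \HOM_A(M, P))$ are graded left $R$-modules. I would define
$\Phi \colon \HOM_A(M \otimes_B N, P) \to \HOM_B(N, \HOM_A(M, P))$ by $\Phi(f)(n)(m) = f(m \otimes n)$, the usual adjunction assignment; its candidate inverse is $\Psi(g)(m \otimes n) = g(n)(m)$. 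That $\Psi(g)$ is well defined on $M \otimes_B N$ uses that $g$ is $B$-linear and that the $B$-action on $\HOM_A(M, P)$ is $(b \cdot h)(m) = h(mb)$, so $g(n)(mb) = (b \cdot g(n))(m) = g(bn)(m)$; that $\Phi(f)(n)$ is $A$-linear, that $\Phi(f)$ is $B$-linear (namely $\Phi(f)(bn) = b \cdot \Phi(f)(n)$, again using $f(mb \otimes n) = f(m \otimes bn)$), and that $\Phi$ and $\Psi$ are mutually inverse $R$-module homomorphisms, are the standard verifications, carried out exactly as in the ungraded case.

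The substantive point is that $\Phi$ is \emph{degree preserving}. Suppose $f \in \HOM_A(M \otimes_B N, P)_{\alpha}$ and take homogeneous $m \in M_{\gamma}$, $n \in N_{\delta}$. By Lemma~\ref{tensor} the element $m \otimes n$ lies in $(M \otimes_B N)_{\gamma\delta}$, so $f(m \otimes n) \in P_{\gamma\delta\alpha}$. Hence $\Phi(f)(n)$ sends $M_{\gamma}$ into $P_{\gamma(\delta\alpha)}$ for every $\gamma$, i.e. $\Phi(f)(n) \in \HOM_A(M, P)_{\delta\alpha}$; and therefore $\Phi(f)$ sends $N_{\delta}$ into $\HOM_A(M, P)_{\delta\alpha}$ for every $\delta$, i.e. $\Phi(f) \in \HOM_B(N, \HOM_A(M, P))_{\alpha}$. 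Thus $\Phi$ maps the degree-$\alpha$ component of the left-hand side into the degree-$\alpha$ component of the right-hand side; the same computation applied to $\Psi$ (or, equivalently, the observation that the bijective $R$-linear map $\Phi$ respects the two direct-sum decompositions, so its inverse must too) gives the reverse inclusion. Hence $\Phi$ restricts to an $R$-module isomorphism on each homogeneous component, so it is a graded isomorphism of graded $R$-modules, which is the assertion.

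An alternative route, if one prefers to avoid writing the maps out, is to use \eqref{shift} together with the identity $\HOM_A(M, P(\alpha)) = \HOM_A(M, P)(\alpha)$, which is immediate from the definitions of $\HOM$ and of the shift functor $\CT_{\alpha}$: then the degree-$\alpha$ component of the left-hand side is $\Hom_{A \Gr}(M \otimes_B N, P(\alpha))$, that of the right-hand side is $\Hom_{B \Gr}(N, \HOM_A(M, P(\alpha)))$, and the graded refinement of the ordinary adjunction (a graded $A$-map $M \otimes_B N \to Q$ corresponds to a graded $B$-map $N \to \HOM_A(M, Q)$) identifies them naturally in $\alpha$; assembling over $\alpha \in \Gamma$ yields the graded isomorphism. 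I expect the only real care needed is bookkeeping: since $\Gamma$ need not be abelian, one must keep the order of group products straight throughout --- the degree of $m \otimes n$ is $\gamma\delta$, an element of $\HOM_A(M, P)$ carrying $M_{\gamma}$ into $P_{\gamma\alpha}$ has degree $\alpha$, and so on --- and confirm that all subscripts line up as claimed. No genuine obstacle is anticipated beyond this.
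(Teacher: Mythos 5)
Your proof is correct, and the degree bookkeeping is exactly right for the paper's convention $\HOM_A(M,N)_{\alpha}=\{f \mid f(M_\gamma)\subseteq N_{\gamma\alpha}\}$: both the explicit adjunction map $\Phi(f)(n)(m)=f(m\otimes n)$ and the alternative route via \eqref{shift} and $\HOM_A(M,P(\alpha))=\HOM_A(M,P)(\alpha)$ go through, including for non-abelian $\Gamma$. The paper itself offers no proof of this lemma, quoting it as known from \cite[Chapter 9]{K}, so your argument simply writes out the standard graded tensor--hom adjunction that the citation stands for.
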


\subsection{Groupoids and Steinberg algebras}

A {\it groupoid} is a small category in which every morphism has an inverse.  Let $\CG$ be a groupoid. The  set of objects of $\CG$ is denoted by $\CG^{(0)}$ and called the {\it unit space}, where we identify objects with their identity morphims. If $x \in \CG$, then $d(x) = x^{-1}x$ is the {\it domain} and $c(x) = x x^{-1}$ is the {\it codomain} of $x$. Thus we have maps $d, c: \CG \to \CG^{(0)}$ such that $xd(x) = x$ and $r(x)x = x$ for all $x \in \CG$. Moreover, a pair $(x, y) \in \CG \times \CG$ is composable (with product written as $xy$)  if and only if $d(x) = c(y)$; we write $\CG^{(2)}$ for the set of all composable pairs. For $U, V \subseteq \CG$, we define
$$
UV = \{xy \mid x \in U, y \in V, d(x) = c(y)\}.
$$
Let $x \in \CG^{(0)}$. We denote by $L_x = d^{-1}(x)$ the set of all morphisms whose domain is $x$. The set 
$$
\CG_x = \{y \in \CG \mid d(y) = x = c(y)\}
$$ 
is a group with identity $x$, called the {\it isotropy group} of $\CG$ at $x$. The {\it orbit} of $x$ is defined to be
$$
\CO_x = \{y \in \CG^{(0)} \mid \text{ there exists } z \in \CG \text { with } d(z) = x, c(z) = y\}.
$$
It is clear that if $x$ and $y$ are in the same orbit, then the isotropy groups $\CG_x$ and $\CG_y$ are isomorphic. 

A {\it topological groupoid} is a groupoid endowed with a topology such that the inversion map $\CG \to \CG$ and the composition map $\CG^{(2)} \to \CG$ are continuous, where $\CG^{(2)}$ has the relative product topology. In addition, if the map $d$ is a local homeomorphism, then $\CG$ is called an {\it \'etale groupoid}; in this case, $c$ is also a local homeomorphism. An {\it open bisection} of $\CG$ is an open subset $U \subseteq \CG$ such that $d|_U$ and $c|_U$ are homeomorphisms onto an open subset of $\CG^{(0)}$. An \'etale groupoid $\CG$ is called {\it ample} if it has a topological basis consisting of compact open bisections and $\CG^{(0)}$ is Hausdorff. Let $\Bco(\CG) = \{U \subseteq \CG \mid U \text{ is a compact open bisection}\}$.

\begin{definition}\cite{S1} 
Let $\CG$ be an ample groupoid. Define $A_R(\CG)$ to be the $R$-submodule of $R^{\CG}$ spanned by the set $\{\1_U \mid U \in \Bco(\CG)\}$, where $\1_U$ denotes the characteristic function of $U$. The convolution product of $f, g \in A_R(\CG)$ is defined by
$$
f*g(x) = \sum_{d(y) = d(x)} f(xy^{-1})g(y) \quad \text{ for all } x \in \CG.
$$
The $R$-module $A_R(\CG)$, with the convolution product, is called the {\it Steinberg algebra} of $\CG$ over $R$.
\end{definition}

That the convolution product is well-defined comes from the fact $\1_U * \1_V = \1_{UV}$ for all $U, V \in \Bco(\CG)$. If $\CG$ is Hausdorff, $A_R(\CG)$ equals the set of locally constant functions with compact support from $\CG$ to $R$.

\subsection{Graded groupoids and graded Steinberg algebras}

Let $\Gamma$ be a discrete abelian group with identity $\varepsilon$ and $\CG$ be a topological groupoid. The groupoid $\CG$ is said to be $\Gamma$-{\it graded} if there is a continuous map $\kappa: \CG \to \Gamma$ such that $\kappa(xy) = \kappa(x)\kappa(y)$ for all $(x, y) \in \CG^{(2)}$. Equivalently, $\CG$ is $\Gamma$-{\it graded} if it decomposes as a disjoint union $\bigsqcup _{\gamma \in \Gamma} \CG_{\gamma}$, where $\CG_{\gamma}$'s are clopen subsets of $\CG$ such that $\CG_{\gamma}\CG_{\delta} \subseteq \CG_{\gamma\delta}$ for all $\gamma, \delta \in \Gamma$ (to see the equivalence, one takes $\CG_{\gamma} = \kappa^{-1}(\gamma)$). The set $\CG_{\gamma}$ is called the $\gamma$-{\it homogeneous component} of $\CG$, and a subset $X \subseteq \CG$ is called $\gamma$-{\it homogeneous} if $X \subseteq \CG_{\gamma}$. Note that $\CG^{(0)}$ is $\varepsilon$-homogeneous. 

If $\CG$ is a $\Gamma$-graded ample groupoid, then we denote by $\Bgamma(\CG)$ the set of all $\gamma$-homogeneous compact open bisections of $\CG$. An important observation is that the set of all homogeneous compact open bisections is a basis for the topology on $\CG$.

\begin{lemma}\cite[Lemma 3.1]{CS} \label{graded}
Let $\CG$ be a $\Gamma$-graded ample groupoid. Then $A_R(\CG) = \bigoplus_{\gamma \in \Gamma} A_R(\CG)_\gamma$ is a $\Gamma$-graded algebra, where $A_R(\CG)_{\gamma}$ is the $R$-submodule spanned by the set $\{\1_U \mid U \in \Bgamma(\CG)\}$.
\end{lemma}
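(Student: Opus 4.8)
The plan is to verify the three defining conditions of a $\Gamma$-grading on $A_R(\CG)$ with the prescribed homogeneous components $A_R(\CG)_\gamma$: that these submodules span $A_R(\CG)$, that their sum is direct, and that multiplication is compatible with the grading. The essential point is the spanning statement, and that is where the real work lies. Since $A_R(\CG)$ is spanned by the $\1_U$ with $U \in \Bco(\CG)$, it suffices to write each such $\1_U$ as a finite sum of functions $\1_V$ with $V$ homogeneous, and for this I would decompose $U$ itself into a finite disjoint union of homogeneous compact open bisections. Using the observation recalled above that the homogeneous compact open bisections form a basis for the topology, for every $x \in U$ pick a homogeneous compact open bisection $W_x$ with $x \in W_x \subseteq U$; by compactness of $U$ finitely many $W_{x_1}, \dots, W_{x_n}$ already cover $U$. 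Now transport the situation along the homeomorphism $d|_U$ onto the open subset $d(U) \subseteq \CG^{(0)}$: since $\CG^{(0)}$ is Hausdorff and $\CG$ is ample, $d(U)$ is a locally compact Hausdorff totally disconnected space, so its compact open subsets are closed under finite intersections and relative complements. The sets $d(W_{x_i})$ are compact open in $d(U)$, hence so are the ``atoms'' $V_S := \bigcap_{i \in S} d(W_{x_i}) \setminus \bigcup_{j \notin S} d(W_{x_j})$ for $\emptyset \neq S \subseteq \{1, \dots, n\}$; these are pairwise disjoint and their union is $d(U)$. Pulling them back through $(d|_U)^{-1}$ produces pairwise disjoint compact open bisections $U_S$ with $U = \bigsqcup_S U_S$ and $U_S \subseteq W_{x_i}$ for each $i \in S$, so each $U_S$ is homogeneous of the degree of any chosen $W_{x_i}$. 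Therefore $\1_U = \sum_S \1_{U_S} \in \sum_{\gamma \in \Gamma} A_R(\CG)_\gamma$.

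Directness and multiplicativity are then formal. Every generator $\1_U$ of $A_R(\CG)_\gamma$ is supported on $U \subseteq \CG_\gamma$, so $A_R(\CG)_\gamma$ sits inside the $R$-submodule of $R^{\CG}$ consisting of functions supported on $\CG_\gamma$; since $\CG = \bigsqcup_\gamma \CG_\gamma$ is a genuine disjoint decomposition, these supports are pairwise disjoint, which forces $\sum_\gamma A_R(\CG)_\gamma$ to be a direct sum. For multiplicativity, recall that products of compact open bisections are again compact open bisections and that $\1_U * \1_V = \1_{UV}$; if $U$ is $\gamma$-homogeneous and $V$ is $\delta$-homogeneous then $UV \subseteq \CG_\gamma \CG_\delta \subseteq \CG_{\gamma\delta}$, so $\1_U * \1_V = \1_{UV} \in A_R(\CG)_{\gamma\delta}$, and extending by bilinearity yields $A_R(\CG)_\gamma \, A_R(\CG)_\delta \subseteq A_R(\CG)_{\gamma\delta}$.

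The only step that is not pure bookkeeping is the passage, inside the spanning argument, from a finite homogeneous \emph{cover} of a bisection to a finite homogeneous \emph{partition}; this rests on the compact open subsets of the (Hausdorff, totally disconnected) unit space, equivalently of any single bisection, being stable under finite intersection and set difference. Once that is granted, the remainder of the argument is a short assembly of the pieces above.
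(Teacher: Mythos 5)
Your argument is correct, but note that the paper itself gives no proof of this lemma --- it is quoted from Clark--Sims \cite[Lemma 3.1]{CS} --- and your route is more roundabout than the standard one. The entire detour through a finite cover by homogeneous basis elements and the "atoms" $V_S$ in $d(U)$ can be bypassed: since the grading decomposes $\CG = \bigsqcup_{\gamma} \CG_\gamma$ into \emph{clopen} subsets, for any $U \in \Bco(\CG)$ each $U \cap \CG_\gamma$ is automatically an open (as $\CG_\gamma$ is open) and compact (as a closed subset of the compact set $U$) sub-bisection of $U$, homogeneous of degree $\gamma$; the sets $\CG_\gamma$ form a disjoint open cover of the compact set $U$, so only finitely many intersections are nonempty and $\1_U = \sum_\gamma \1_{U \cap \CG_\gamma}$ at once. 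This is essentially the argument in \cite{CS}, and it needs neither the basis observation nor the Hausdorff/totally-disconnected bookkeeping on the unit space. Your version is nevertheless sound: the pullbacks $U_S$ are indeed pairwise disjoint compact open bisections partitioning $U$, and (a point you assert implicitly) their homogeneity is unambiguous because a nonempty $U_S$ lies in $W_{x_i}$ for every $i \in S$, forcing all those degrees to coincide since distinct homogeneous components are disjoint; the directness and multiplicativity arguments ($\1_U * \1_V = \1_{UV}$ with $UV \subseteq \CG_{\gamma\delta}$, and disjointness of supports) are exactly as one would expect.
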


Note that any ample groupoid $\CG$ admits a trivial grading from the trivial group $\{\varepsilon\}$, which gives rise to a trivial grading on $A_R(\CG)$.

\section{Induction and restriction functors of graded Steinberg algebras} \label{3}

Let $\CG$ be an ample groupoid and $x \in \CG^{(0)}$. In \cite{S1}, Steinberg constructed the induction and the restriction functors between the category of $A_R(\CG)$-modules and the category of modules over the group algebra $R\CG_x$. In this section, we prove a graded version of these functors and related results.

Recall that $L_x = d^{-1}(x)$ denotes the set of morphisms starting at $x$. Let $RL_x$ be the free $R$-module with basis $L_x$. The isotropy group $\CG_x$ acts freely on the right of $L_x$ by composition of morphisms. Thus $RL_x$ is a free right $R\CG_x$-module with basis being a transversal for $L_x/\CG_x$. Also, a left $A_R(\CG)$-module structure on $RL_x$ is given by
$$
f t = \sum_{d(y) = c(t)} f(y) yt,
$$
for $f \in A_R(\CG)$ and $t \in L_x$. Observe that $d(yt) = d(t) = x$, so $ft \in RL_x$.

\begin{proposition}\cite[Proposition 7.8]{S1} \label{bimodule}
If $U \in \Bco(\CG)$ and $t \in L_x$, then
$$
\1_U t = \begin{cases} 
yt, & \text{if } c(t) = d(y) \text{ for some (unique) element } y \in U,  \\ 
0, & \text{else.}
\end{cases}
$$
Consequently, $RL_x$ is a well-defined $A_R(\CG)$-$R\CG_x$-bimodule .
\end{proposition}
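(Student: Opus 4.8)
The plan is to prove the formula for $\mathbf{1}_U t$ first and then deduce that $RL_x$ is a genuine bimodule, checking associativity of the two actions.

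\medskip

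\textbf{Step 1: the formula for $\mathbf{1}_U t$.} Fix $U \in \Bco(\CG)$ and $t \in L_x$. By definition of the $A_R(\CG)$-action on $RL_x$ we have $\mathbf{1}_U t = \sum_{d(y) = c(t)} \mathbf{1}_U(y)\, yt$. The function $\mathbf{1}_U$ is nonzero only on $U$, so the sum runs over $\{y \in U \mid d(y) = c(t)\}$. Since $U$ is a bisection, $d|_U$ is injective, hence there is \emph{at most one} $y \in U$ with $d(y) = c(t)$; if such a $y$ exists, the sum collapses to the single term $yt$ (note $(y,t)$ is composable precisely because $d(y) = c(t)$), and if no such $y$ exists the sum is empty and equals $0$. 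This gives exactly the case distinction in the statement. One should also note $yt \in L_x$ since $d(yt) = d(t) = x$, so the formula does land in $RL_x$.

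\medskip

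\textbf{Step 2: $RL_x$ is a left $A_R(\CG)$-module.} The $R$-linearity in the module element is immediate from the definition. For associativity it suffices, by $R$-bilinearity and since the $\mathbf{1}_U$ ($U \in \Bco(\CG)$) span $A_R(\CG)$, to check $(\mathbf{1}_U * \mathbf{1}_V) t = \mathbf{1}_U(\mathbf{1}_V t)$ for $U, V \in \Bco(\CG)$ and $t \in L_x$. Using $\mathbf{1}_U * \mathbf{1}_V = \mathbf{1}_{UV}$ together with the formula from Step~1 applied twice, both sides are seen to equal $(uv)t$ when there exist (necessarily unique) $v \in V$ with $d(v) = c(t)$ and $u \in U$ with $d(u) = c(v)$ — so that $uv \in UV$ with $d(uv) = c(t)$ — and both sides are $0$ otherwise. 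Here one uses that $d|_{UV}$ is injective (because $UV \in \Bco(\CG)$, products of compact open bisections being compact open bisections) to match the "else" cases; this bookkeeping is the only mildly delicate point, but it is routine.

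\medskip

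\textbf{Step 3: the right $R\CG_x$-action and compatibility.} The right action of $R\CG_x$ is by $R$-linear extension of right composition $t \cdot g = tg$ for $g \in \CG_x$, which is well-defined since $d(g) = c(g) = x = d(t)$ forces $(t,g)$ composable with $d(tg) = d(g) = x$, so $tg \in L_x$; this makes $RL_x$ a right $R\CG_x$-module (group associativity). Finally, the two actions commute: for $\mathbf{1}_U \in A_R(\CG)$, $t \in L_x$, $g \in \CG_x$, apply Step~1 — if there is $y \in U$ with $d(y) = c(t)$ then $(\mathbf{1}_U t)g = (yt)g = y(tg)$, and since $c(tg) = c(t)$ the \emph{same} $y \in U$ witnesses $\mathbf{1}_U(tg) = y(tg)$; if no such $y$ exists, both $(\mathbf{1}_U t)g$ and $\mathbf{1}_U(tg)$ vanish (again using $c(tg)=c(t)$). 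Extending $R$-bilinearly over $A_R(\CG)$ and $R\CG_x$ gives the bimodule compatibility $(ft)g = f(tg)$, completing the proof. I do not anticipate a real obstacle here; the "hard part", such as it is, is purely the case-chasing in Step~2 to confirm the zero cases agree, which hinges on the stability of $\Bco(\CG)$ under products and the injectivity of the domain map on bisections.
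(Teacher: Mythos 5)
Your proof is correct: the case analysis in Step 1 follows directly from the definition of the action and the injectivity of $d|_U$, and Steps 2--3 correctly reduce associativity and compatibility to the spanning elements $\1_U$ via $\1_U * \1_V = \1_{UV}$ and the closure of $\Bco(\CG)$ under products. The paper itself gives no proof, simply citing \cite[Proposition 7.8]{S1}, and your argument is essentially the standard verification carried out in that reference, so there is nothing to flag.
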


\begin{definition}\cite[Definition 7.9]{S1}
For $x \in \CG^{(0)}$, the induction functor
$$
\Ind_x: R\CG_x \Mod \to A_R(\CG) \Mod
$$
is defined by
$$
\Ind_x(N) = RL_x \otimes_{R\CG_x} N.
$$
\end{definition}

Now let $\CG = \bigsqcup_{\gamma \in \Gamma} \CG_{\gamma}$ be a $\Gamma$-graded ample groupoid and let $x \in \CG^{(0)}$. Then $\CG_x = \bigsqcup_{\gamma \in \Gamma} \CG_{x,\gamma}$, where $\CG_{x,\gamma} = \CG_x \cap \CG_{\gamma}$. Clearly $\CG_{x,\gamma}\CG_{x,\delta} \subseteq \CG_{x,\gamma\delta}$ for all $\gamma, \delta \in \Gamma$. Hence $R\CG_x$ is a $\Gamma$-graded $R$-algebra whose $\gamma$-homogeneous component is the free $R$-module with basis $\CG_{x,\gamma}$; in other words, $R\CG_x = \bigoplus_{\gamma \in \Gamma}R\CG_{x,\gamma}$.

Analogously, there is a decomposition $L_x = \bigsqcup_{\gamma \in \Gamma} L_{x, \gamma}$ with $L_{x, \gamma} = L_x \cap \CG_{\gamma}$. Moreover, it follows from Proposition~\ref{bimodule} and the definition of the right action of $\CG_x$ on $L_x$ that 
$$
A_R(\CG)_{\alpha} RL_{x,\gamma} \subseteq RL_{x, \alpha\gamma} \quad \text{ and } \quad RL_{x, \gamma}R\CG_{x,\beta} \subseteq RL_{x, \gamma\beta}
$$
for all $\alpha, \beta, \gamma \in \Gamma$. Thus $RL_x = \bigoplus_{\gamma \in \Gamma} RL_{x, \gamma}$ is a graded $A_R(\CG)$-$R\CG_x$-bimodule.

\begin{proposition}
Let $\CG$ be a $\Gamma$-graded ample groupoid and $x \in \CG^{(0)}$. If $N = \bigoplus_{\gamma \in \Gamma} N_{\gamma}$ is a graded $R\CG_x$-module, then $\Ind_x(N) = RL_x \otimes_{R\CG_x} N$ is a graded $A_R(\CG)$-module. Moreover, $\Ind_x: R\CG_x \Gr \to A_R(\CG) \Gr$ is a functor, and it commutes with the shift functors $\CT_{\alpha}: R\CG_x \Gr \to R\CG_x \Gr$ and $\CT_{\alpha}: A_R(\CG)\Gr \to A_R(\CG) \Gr$.
\end{proposition}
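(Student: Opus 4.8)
The plan is to obtain the entire statement as a fairly direct consequence of Lemma~\ref{tensor} together with the grading on $RL_x$ that was established immediately before the proposition.

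\emph{Graded module structure.} Since it has just been shown that $RL_x = \bigoplus_{\gamma \in \Gamma} RL_{x, \gamma}$ is a graded $A_R(\CG)$-$R\CG_x$-bimodule, I would apply Lemma~\ref{tensor} with $M = RL_x$ (a graded $A_R(\CG)$-$R\CG_x$-bimodule) and $N$ the given graded left $R\CG_x$-module. This immediately gives that $\Ind_x(N) = RL_x \otimes_{R\CG_x} N$ is a graded left $A_R(\CG)$-module. Concretely, the $\gamma$-homogeneous component $\Ind_x(N)_\gamma$ is the image in $RL_x \otimes_{R\CG_x} N$ of $\bigoplus_{\mu\nu = \gamma} RL_{x,\mu} \otimes_R N_\nu$; I would recall from the construction underlying Lemma~\ref{tensor} that the submodule of relations defining the tensor product over $R\CG_x$ is a graded $R$-submodule of $RL_x \otimes_R N$, so this description is well-defined.

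\emph{Functoriality.} For a graded homomorphism $f \colon N \to N'$ of graded $R\CG_x$-modules, I set $\Ind_x(f) = \Id_{RL_x} \otimes f$. This is an $A_R(\CG)$-module homomorphism by the usual functoriality of $- \otimes_{R\CG_x} N$, and it is graded because on a homogeneous generator $t \otimes n$ with $t \in L_{x,\mu}$ and $n \in N_\nu$ one has $(\Id_{RL_x} \otimes f)(t \otimes n) = t \otimes f(n)$ with $f(n) \in N'_\nu$, so the degree $\mu\nu$ is preserved. The identities $\Ind_x(\Id_N) = \Id_{\Ind_x(N)}$ and $\Ind_x(g \circ f) = \Ind_x(g) \circ \Ind_x(f)$ are inherited verbatim from the corresponding facts for $- \otimes_{R\CG_x} -$. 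Hence $\Ind_x \colon R\CG_x \Gr \to A_R(\CG) \Gr$ is a functor.

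\emph{Compatibility with the shifts.} Here I would establish the stronger assertion $\Ind_x \circ \CT_\alpha = \CT_\alpha \circ \Ind_x$ on the nose. Both $\Ind_x(N(\alpha))$ and $\Ind_x(N)(\alpha)$ have the same underlying $A_R(\CG)$-module, namely $RL_x \otimes_{R\CG_x} N$, since $N(\alpha)$ has underlying $R\CG_x$-module $N$ and the $A_R(\CG)$-action on the tensor product depends only on the bimodule $RL_x$. It remains to match the gradings: by the first part $\Ind_x(N(\alpha))_\gamma$ is the image of $\bigoplus_{\mu\nu = \gamma} RL_{x,\mu} \otimes_R N(\alpha)_\nu = \bigoplus_{\mu\nu = \gamma} RL_{x,\mu} \otimes_R N_{\nu\alpha}$, and reindexing by $\nu' = \nu\alpha$, so that $\mu\nu = \gamma$ becomes $\mu\nu' = \gamma\alpha$ (using that $\Gamma$ is abelian), this image equals $\Ind_x(N)_{\gamma\alpha} = \bigl(\Ind_x(N)(\alpha)\bigr)_\gamma$. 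On morphisms both composites send $f$ to $\Id_{RL_x} \otimes f$, so the two functors agree; in particular $\Ind_x$ commutes with the shift functors, and the relation $\CT_\alpha \CT_\beta = \CT_{\alpha\beta}$ is preserved on both sides. The only point requiring a little care is this reindexing and checking that the grading supplied by Lemma~\ref{tensor} is indeed the expected one, but no real difficulty arises.
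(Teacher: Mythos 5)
Your proposal is correct and follows essentially the same route as the paper: Lemma~\ref{tensor} applied to the graded bimodule $RL_x$ for the grading, $\Id \otimes f$ for functoriality, and compatibility of the tensor product with shifts for the last claim (which the paper simply cites as an easy fact, while you spell out the reindexing explicitly). No issues.
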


\begin{proof}
The first statement follows from the above observations and Lemma~\ref{tensor}. Concretely, the $\gamma$-homogeneous component of $\Ind_x(N)$ is the quotient module $(Z+T)/T$, where $Z$ is an $R$-submodule of $RL_x \otimes_R N$ generated by elements of the form $t \otimes n$
for $t \in L_{x,\alpha}$ and $n \in N_\beta$ such that $\alpha \beta = \gamma$, and $T$ is the graded submodule generated by elements
$$
\{ty \otimes n - t \otimes yn \mid t \in L_x, \; n \in N, y \in \CG_x \text{ are all homogeneous}\}.
$$
If $f: N \to N'$ is a graded homomorphism of graded $R\CG_x$-modules, then clearly
$$
\Id_{R\CG_x} \otimes f: \Ind_x(N) \to \Ind_x(N'), \; t \otimes n \mapsto t \otimes f(n) 
$$
is a graded homomorphism, for $t \in RL_x$ and $n \in N$. The last statement follows from an easy fact that the tensor product commutes with the shift functors.
\end{proof}

\begin{definition}\cite[Definition 7.12]{S1}
For $x \in \CG^{(0)}$, the restriction functor
$$
\Res_x: A_R(\CG) \Mod \to R\CG_x \Mod
$$
is defined by
$$
\Res_x(M) = \bigcap_{U \ni x} \1_U M,
$$
where $U$ ranges over the compact open subsets of $\CG^{(0)}$ that contain $x$.
\end{definition}

Since $\1_U M$ is an $R$-submodule of $M$, $\Res_x(M)$ is also an $R$-submodule. The $R\CG_x$-module structure on $\Res_x(M)$ is given by 
\begin{equation} \label{res}
tw = \1_V w,
\end{equation}
for $t \in \CG_x$ and $w \in \Res_x(M)$, where $V$ is any compact open bisection of $\CG$ such that $t \in V$. Note that \eqref{res} also holds for $t \in L_x$ (see \cite[p. 717]{S1}).

\begin{proposition}
Let $\CG$ be a $\Gamma$-graded ample groupoid and $x \in \CG^{(0)}$. If $M$ is a graded $A_R(\CG)$-module, then $\Res_x(M)$ is a graded $R\CG_x$-module. Moreover, $\Res_x: A_R(\CG) \Gr \to R\CG_x \Gr$ is a functor.
\end{proposition}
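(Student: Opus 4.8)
The plan is to check, in sequence, three things: that $\Res_x(M)$ carries a $\Gamma$-grading making it a graded $R$-submodule of $M$; that the $R\CG_x$-action \eqref{res} respects that grading; and that a graded $A_R(\CG)$-homomorphism restricts to a graded $R\CG_x$-homomorphism. The non-graded forms of all three are already supplied by \cite{S1}, so the only work is to track how the construction interacts with the $\Gamma$-gradings of $\CG$, $A_R(\CG)$ and $R\CG_x$.

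First, I would note that a compact open subset $U$ of $\CG^{(0)}$ is contained in $\CG^{(0)} \subseteq \CG_\varepsilon$, so $\1_U$ is homogeneous of degree $\varepsilon$ by Lemma~\ref{graded}. Consequently $\1_U M = \bigoplus_{\gamma \in \Gamma} \1_U M_\gamma$ is a graded $R$-submodule of $M$. Invoking the elementary fact that an intersection of graded $R$-submodules is graded --- if $w = \sum_\gamma w_\gamma$ with $w_\gamma \in M_\gamma$ lies in every $\1_U M$, then so does each $w_\gamma$ --- I conclude that $\Res_x(M) = \bigcap_{U \ni x} \1_U M$ is graded, and I set $\Res_x(M)_\gamma := \Res_x(M) \cap M_\gamma$.

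Next, to see the $R\CG_x$-action is homogeneous of the right degree, I would take $t \in \CG_{x,\alpha}$ and $w \in \Res_x(M)_\gamma$. Since the homogeneous compact open bisections form a basis for the topology on $\CG$, there is a compact open bisection $V$ with $t \in V$ that is homogeneous --- necessarily of degree $\alpha$, as the sets $\CG_\delta$ are pairwise disjoint --- so $\1_V \in A_R(\CG)_\alpha$. Because \eqref{res} is independent of the choice of bisection containing $t$, this gives $tw = \1_V w \in A_R(\CG)_\alpha M_\gamma \subseteq M_{\alpha\gamma}$, while $tw \in \Res_x(M)$ by the non-graded statement; hence $tw \in \Res_x(M)_{\alpha\gamma}$, and extending $R$-linearly yields $R\CG_{x,\alpha}\,\Res_x(M)_\gamma \subseteq \Res_x(M)_{\alpha\gamma}$ for all $\alpha,\gamma \in \Gamma$. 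For functoriality, a graded $A_R(\CG)$-homomorphism $f : M \to M'$ satisfies $f(\1_U m) = \1_U f(m)$, so it carries $\Res_x(M)$ into $\Res_x(M')$; the resulting map is the $R\CG_x$-homomorphism $\Res_x(f)$ of \cite{S1}, and it is graded because $f(\Res_x(M) \cap M_\gamma) \subseteq \Res_x(M') \cap M'_\gamma$. Preservation of identities and composites is inherited from the non-graded functor.

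The step I expect to need the most care is the second: realising the $R\CG_x$-action on $\Res_x(M)$ via a \emph{homogeneous} representative bisection. This is precisely where the basis property of homogeneous compact open bisections is used, together with the independence of \eqref{res} from the choice of bisection; everything else reduces to the routine facts that $\1_U$ is $\varepsilon$-homogeneous for $U \subseteq \CG^{(0)}$ and that an intersection of graded submodules is graded.
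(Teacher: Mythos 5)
Your proposal is correct and follows essentially the same route as the paper: show $\1_U$ is $\varepsilon$-homogeneous so each $\1_U M$ and hence the intersection $\Res_x(M)$ is a graded $R$-submodule, then use the basis of homogeneous compact open bisections to realise the action of $t \in \CG_{x,\alpha}$ by some $\1_V$ with $V$ homogeneous of degree $\alpha$, giving $tw \in M_{\alpha\gamma} \cap \Res_x(M)$, and finally note that a graded homomorphism restricts to a graded homomorphism between the restricted modules. The only differences are cosmetic (you spell out why the intersection of graded submodules is graded and why $tw$ stays in $\Res_x(M)$, which the paper leaves implicit).
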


\begin{proof}
Since $M = \bigoplus_{\gamma \in \Gamma} M_{\gamma}$ is a graded $A_R(\CG)$-module, this is also the decomposition of $M$ as a graded $R$-module (where $R = R_{\varepsilon}$ has the trivial grading). For any compact open subset $U$ of $\CG^{(0)}$ with $x \in U$, $\1_U$ is an $\varepsilon$-homogeneous element and so $\1_U M_\gamma \subseteq M_\gamma$. It follows that $\1_U M = \bigoplus_{\gamma \in \Gamma} \1_U M_\gamma$ is a graded $R$-submodule of $M$. As an intersection of graded $R$-submodules, $W = \Res_x(M) = \bigcap_U \1_U M$ is also a graded $R$-submodule.  Hence $W = \bigoplus_{\gamma \in \Gamma} W_{\gamma}$, where $W_{\gamma} = W \cap M_{\gamma}$ (it can be shown that $W_\gamma = \bigcap_U \1_U M_\gamma$). We need to verify that $\CG_{x,\alpha} W_{\gamma} \subseteq W_{\alpha\gamma}$ for all $\alpha, \gamma \in \Gamma$.

Let $t \in \CG_{x,\alpha}$ and $w \in W_{\gamma}$. As homogeneous compact open bisections form a basis for the topology on $\CG$, there exists $V \in \Balpha(\CG)$ such that $t \in V$. Then $\1_V \in A_R(\CG)_\alpha$. Hence $tw = \1_V w \in M_{\alpha\gamma} \cap W = W_{\alpha\gamma}$, as required. Consequently, $W$ is a graded $R\CG_x$-module.

Let $f: M \to M'$ be a graded homomorphism of graded $A_R(\CG)$-modules, where $M' = \bigoplus_{\gamma} M'_{\gamma}$. Then clearly $f(\Res_x M) \subseteq \Res_x M'$ and $f(M_\gamma) \subseteq M'_\gamma$. It follows that $\Res_x(f)$ is a graded homomorphism.
\end{proof}

The following result is a graded version of \cite[Lemma 7.14]{S1} with almost the same proof, which will be omitted.

\begin{proposition} \label{res_simple}
Let $M$ be a graded simple $A_R(\CG)$-module. Then the graded $R\CG_x$-module $\Res_x(M)$ is either zero or a graded simple $R\CG_x$-module.
\end{proposition}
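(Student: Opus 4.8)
The plan is to mimic the proof of the non-graded statement \cite[Lemma 7.14]{S1}, working throughout in the category $A_R(\CG) \Gr$ and keeping careful track of the $\Gamma$-grading. Suppose $\Res_x(M) = W = \bigoplus_{\gamma \in \Gamma} W_\gamma$ is nonzero; we must show it has no proper nonzero graded $R\CG_x$-submodule. So let $0 \neq V$ be a graded $R\CG_x$-submodule of $W$, i.e.\ $V = \bigoplus_{\gamma \in \Gamma} (V \cap W_\gamma)$; the goal is to prove $V = W$.

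First I would pass from $V$ back up to $M$: form the graded $A_R(\CG)$-submodule $A_R(\CG) V$ of $M$. Since $V$ contains a nonzero homogeneous element and $M$ is graded simple, we get $A_R(\CG) V = M$. Now pick an arbitrary nonzero homogeneous element $w \in W_\gamma \subseteq M_\gamma$; the task is to show $w \in V$. Because $A_R(\CG) V = M$ and $w$ is homogeneous, $w$ can be written as a finite sum $w = \sum_i a_i v_i$ with $a_i \in A_R(\CG)$ and $v_i \in V$, and by separating into homogeneous components we may arrange that each $a_i v_i$ is homogeneous of degree $\gamma$, with $a_i$ homogeneous; moreover each $a_i$ is an $R$-linear combination of indicator functions $\1_{U}$ with $U$ a homogeneous compact open bisection, so after re-expanding we may assume $a_i = r_i \1_{U_i}$ with $U_i \in \Bco(\CG)$ homogeneous. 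The key point, exactly as in \cite{S1}, is to apply $\1_Z$ for a small enough compact open neighbourhood $Z \subseteq \CG^{(0)}$ of $x$: since $w \in \Res_x(M)$ we have $\1_Z w = w$, and $\1_Z \1_{U_i} = \1_{Z U_i}$, so $w = \sum_i r_i \1_{Z U_i} v_i$. By shrinking $Z$ we may assume that for each $i$, either $Z U_i = \emptyset$ (drop that term) or $Z U_i$ is a homogeneous compact open bisection whose source contains $x$ and whose range lies inside every prescribed neighbourhood of the relevant point of $\CO_x$; using the bisection property and the identity \eqref{res} (valid for $t \in \CG_x$, and more generally the computation on \cite[p.~717]{S1}), each surviving $\1_{Z U_i} v_i$ lies in $V$ because $v_i \in V$ and $V$ is an $R\CG_x$-submodule closed under the action described in \eqref{res}. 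Hence $w$ is an $R$-linear combination of elements of $V$, so $w \in V$; since $w$ was an arbitrary homogeneous element of $W$, we conclude $V = W$, proving $W$ is graded simple.

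The only subtlety relative to the non-graded argument is bookkeeping: one must check at each step that the homogeneous decompositions are respected, that the neighbourhoods $Z$ and bisections $Z U_i$ can be chosen homogeneous (which is guaranteed since the homogeneous compact open bisections form a basis for the topology on $\CG$), and that the $R\CG_x$-action via \eqref{res} on $W$ sends homogeneous elements of $V$ into $V$ — but this is precisely the content of the graded $R\CG_x$-module structure established in the preceding proposition. I expect the main (and only real) obstacle to be verifying that the reduction to $a_i = r_i\1_{U_i}$ with $U_i$ homogeneous can be done while keeping each product $a_i v_i$ homogeneous of the correct degree; this is routine given Lemma~\ref{graded} and the grading on $RL_x$, and is the reason the authors say the proof is ``almost the same'' and omit it.
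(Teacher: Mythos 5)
Your skeleton is the right one (the paper omits the proof, deferring to \cite[Lemma 7.14]{S1}): use graded simplicity to write a homogeneous $w\in\Res_x(M)$ as $\sum_i r_i\1_{U_i}v_i$ with $U_i$ homogeneous compact open bisections and $v_i\in V$ homogeneous, then cut down by characteristic functions of small compact open $Z\subseteq\CG^{(0)}$ containing $x$. But the step that is the actual heart of the proof is wrong as you state it. You claim that by shrinking $Z$ each surviving $ZU_i$ becomes a bisection whose source contains $x$, and that then $\1_{ZU_i}v_i\in V$ by \eqref{res}. Neither half holds. Left multiplication by $\1_Z$ only constrains codomains of elements of $ZU_i$; it cannot force $x$ into the source of $ZU_i$, nor empty it when $U_i$ contains an element with codomain $x$ and domain $y\neq x$ (to control domains you must also insert $\1_Z$ on the right, using $v_i=\1_Zv_i$). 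More importantly, even if $ZU_iZ$ contains an element $t$ with $d(t)=x$, the identity \eqref{res} gives $\1_{ZU_iZ}v_i=tv_i$, and this lies in $V$ only when $t\in\CG_x$, i.e.\ $c(t)=x$ as well; for $t\in L_x\setminus\CG_x$ the element $tv_i$ lands in $\Res_{c(t)}(M)$ and need not lie in $V$, nor even in $\Res_x(M)$. No single compact open $Z\ni x$ forces the codomain to be exactly $x$, so ``range inside every prescribed neighbourhood'' does not rescue the step.

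What is needed (and what Steinberg's proof does) is a dichotomy on whether $U_i$ meets the isotropy group. Write $w=\1_Zw=\sum_i r_i\1_{ZU_iZ}v_i$, where $ZU_iZ=U_i\cap c^{-1}(Z)\cap d^{-1}(Z)$ is compact, and note that the intersection of these sets over all compact open $Z\ni x$ is exactly $U_i\cap\CG_x$, because the compact open subsets of the Hausdorff space $\CG^{(0)}$ separate $x$ from every other unit. If $U_i\cap\CG_x=\emptyset$, compactness yields a single $Z$ with $ZU_iZ=\emptyset$, and one $Z$ can be chosen to work for all such $i$ simultaneously. If $U_i\cap\CG_x\neq\emptyset$, pick $t_i\in U_i\cap\CG_x$ (automatically homogeneous of the degree of $U_i$); then $t_i\in ZU_iZ$ and \eqref{res} gives $\1_{ZU_iZ}v_i=t_iv_i\in R\CG_xv_i\subseteq V$. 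Hence $w=\sum_{i:\,U_i\cap\CG_x\neq\emptyset}r_it_iv_i\in V$ and $V=\Res_x(M)$. Your grading bookkeeping (homogeneous $w$, $v_i$, $U_i$, and $A_R(\CG)V=M$) is fine, but without this compactness/isotropy argument the proposal has a genuine gap exactly where the proof has content.
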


In \cite[Proposition 7.15]{S1}, it is proved that the functors $\Res_x$ and $\Hom_{A_R(\CG)}(RL_x, -)$ are naturally isomorphic, thus $\Ind_x$ is the left adjoint of $\Res_x$. Concretely, for an $A_R(\CG)$-module $M$, the map
$$
\psi: \Hom_{A_R(\CG)}(RL_x, M) \to \Res_x(M), \; f \mapsto f(x)
$$
is an $R\CG_x$-isomorphism. We will show that this is also an isomorphism of graded modules if $M$ is graded. 

By Lemma~\ref{hom}, $\HOM_{A_R(\CG)}(RL_x, M)$ is a graded $R\CG_x$-module for any $M \in A_R(\CG) \Gr$. We may invoke \cite[Corollary 7.11]{S1}, which states that $RL_x$ is a cyclic $A_R(\CG)$-module, to conclude that $\Hom_{A_R(\CG)}(RL_x, M) = \HOM_{A_R(\CG)}(RL_x, M)$. Here we present a direct proof of this fact.

\begin{lemma} \label{HOM}
For any graded $A_R(\CG)$-module $M$, the map
$$
\psi_{gr}: \HOM_{A_R(\CG)}(RL_x, M) \to \Res_x(M), \; f \mapsto f(x)
$$
is a graded $R\CG_x$-isomorphism. Consequently, we have $\Hom_{A_R(\CG)}(RL_x, M) = \HOM_{A_R(\CG)}(RL_x, M)$ is a graded $R\CG_x$-module.
\end{lemma}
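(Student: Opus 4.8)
The plan is to leverage Steinberg's ungraded result \cite[Proposition 7.15]{S1} that
$\psi\colon \Hom_{A_R(\CG)}(RL_x, M) \to \Res_x(M)$, $f \mapsto f(x)$, is an $R\CG_x$-isomorphism, together with its explicit inverse $w \mapsto f_w$, where $f_w(t) = \1_V w$ for $t \in L_x$ and any compact open bisection $V \ni t$ (this formula is well defined, $A_R(\CG)$-linear, and compatible with \eqref{res} by loc.\ cit.). Since $RL_x$ is a graded $A_R(\CG)$-$R\CG_x$-bimodule, Lemma~\ref{hom} already gives that $\HOM_{A_R(\CG)}(RL_x, M)$ is a graded $R\CG_x$-module sitting inside $\Hom_{A_R(\CG)}(RL_x, M)$; it therefore suffices to show that $\psi$ restricts on $\HOM$ to a degree-preserving \emph{surjection} $\psi_{gr}$, and then to deduce the equality $\HOM_{A_R(\CG)}(RL_x, M) = \Hom_{A_R(\CG)}(RL_x, M)$ from the bijectivity of $\psi$.

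First I would check that $\psi_{gr}$ is degree-preserving and $R\CG_x$-linear. Because $\CG^{(0)}$ is $\varepsilon$-homogeneous, $x \in L_{x,\varepsilon}$, so if $f \in \HOM_{A_R(\CG)}(RL_x, M)_\alpha$ then $f(x) \in M_{\varepsilon\alpha} = M_\alpha$; as $f(x) \in \Res_x(M)$ and $\Res_x(M)$ is a graded $R\CG_x$-submodule (by the preceding proposition), we get $f(x) \in \Res_x(M) \cap M_\alpha = \Res_x(M)_\alpha$. Thus $\psi_{gr}\bigl(\HOM_{A_R(\CG)}(RL_x, M)_\alpha\bigr) \subseteq \Res_x(M)_\alpha$, and $\psi_{gr}$ is $R\CG_x$-linear and injective, being a restriction of $\psi$.

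For surjectivity, take a homogeneous $w \in \Res_x(M)_\alpha$ and consider $f_w$. Given $t \in L_{x,\gamma}$, the homogeneous compact open bisections form a basis for the topology on $\CG$, so there is $V \in \Bgamma(\CG)$ with $t \in V$; then $\1_V \in A_R(\CG)_\gamma$ by Lemma~\ref{graded}, whence $f_w(t) = \1_V w \in A_R(\CG)_\gamma \cdot \Res_x(M)_\alpha \subseteq M_{\gamma\alpha}$. Since $L_{x,\gamma}$ is an $R$-basis of $RL_{x,\gamma}$, this shows $f_w \in \HOM_{A_R(\CG)}(RL_x, M)_\alpha$, and $\psi_{gr}(f_w) = f_w(x) = \1_{V_0} w = w$ for any compact open $V_0 \subseteq \CG^{(0)}$ with $x \in V_0$. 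Writing an arbitrary $w \in \Res_x(M)$ as a sum of its homogeneous components and using $R$-linearity of $w \mapsto f_w$, we conclude that $\psi_{gr}$ is onto $\Res_x(M)$. Finally, since $\psi$ is bijective and $\psi_{gr}$ is the restriction of $\psi$ to the subspace $\HOM_{A_R(\CG)}(RL_x, M) \subseteq \Hom_{A_R(\CG)}(RL_x, M)$, surjectivity of $\psi_{gr}$ forces $\HOM_{A_R(\CG)}(RL_x, M) = \Hom_{A_R(\CG)}(RL_x, M)$; being a degree-preserving $R\CG_x$-linear bijection, $\psi_{gr}$ is then a graded $R\CG_x$-isomorphism.

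I do not expect a serious obstacle: the substantive ungraded input — well-definedness and $A_R(\CG)$-linearity of $f_w$, and bijectivity of $\psi$ — is quoted, so the remaining work is purely the bookkeeping of homogeneous degrees. The only genuinely ``graded'' step is the surjectivity argument, where one must verify that Steinberg's inverse $f_w$ actually respects the gradings; the sole nontrivial ingredient there is the existence of a homogeneous compact open bisection through each point, i.e.\ that the homogeneous compact open bisections form a basis. A minor point deserving care is that the grading of $\Res_x(M)$ is the one inherited from $M$ (so that $\Res_x(M)_\alpha = \Res_x(M) \cap M_\alpha$), which is exactly what the previous proposition supplies.
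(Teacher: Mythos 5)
Your proposal is correct and follows essentially the same route as the paper: restrict Steinberg's isomorphism $\psi$ to $\HOM_{A_R(\CG)}(RL_x,M)$, check degree preservation via $x\in L_{x,\varepsilon}$, and prove surjectivity by showing the inverse $w\mapsto f_w$ lands in the correct homogeneous component, using that homogeneous compact open bisections form a basis. Your explicit final step deducing $\HOM=\Hom$ from bijectivity of $\psi$ plus surjectivity of $\psi_{gr}$ is exactly the implicit content of the paper's ``consequently,'' so there is no substantive difference.
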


\begin{proof}
Since $\psi_{gr}$ is the restriction of the map $\psi$ to the submodule $\HOM_{A_R(\CG)}(RL_x, M)$, it is an injective $R\CG_x$-homomorphism. Let $W = \Res_x(M)$. If $f \in \HOM_{A_R(\CG)}(RL_x, M)_\gamma$, then $\psi_{gr}(f) = f(x) \in M_{\varepsilon\gamma} = M_{\gamma}$. Thus $\psi_{gr}(f) \in W_{\gamma}$, which shows that $\psi_{gr}$ is indeed a graded homomorphism. Moreover, as in the proof of \cite[Proposition 7.15]{S1}, each $w \in W$ corresponds to $f_w \in \Hom_{A_R(\CG)}(RL_x, M)$ such that $f_w(t) = tw = \1_V w$ for $t \in L_x$, where $V$ is any compact open bisection of $\CG$ containing $t$. Let $w \in W_\gamma$; if $t \in L_{x,\alpha}$, then we can choose $V \in \Balpha(\CG)$. Thus $f_w(t) = \1_V w \in M_{\alpha\gamma}$. Hence $f_w \in \HOM_{A_R(\CG)}(RL_x, M)_\gamma$. It follows that $\psi_{gr}$ is also surjective. 
\end{proof}

As a consequence of Lemma~\ref{HOM}, we obtain 

\begin{corollary} \label{adjunction}
The functors
$$
\Res_x: A_R(\CG) \Gr \to R\CG_x \Gr  
$$
and
$$
\HOM_{A_R(\CG)}(RL_x, -): A_R(\CG) \Gr \to R\CG_x \Gr
$$
are naturally isomorphic. Thus $\Ind_x$ is the left adjoint of $\Res_x$, and $\Res_x \CT_{\alpha}$ is naturally isomorphic to $\CT_{\alpha}\Res_x$ (where $\CT_{\alpha}$ is the shift functor).
\end{corollary}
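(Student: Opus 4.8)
The plan is to obtain all three assertions from Lemma~\ref{HOM} together with the graded tensor--hom adjunction of Lemma~\ref{adjoint}, treating the three parts of the statement in turn; since the substantive work is already contained in those two lemmas, I expect this to be essentially a bookkeeping argument.

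First, for the natural isomorphism $\Res_x \cong \HOM_{A_R(\CG)}(RL_x,-)$, I would take the maps $\psi_{gr} = \psi_{gr}^M$ provided by Lemma~\ref{HOM}, one for each $M \in A_R(\CG)\Gr$, each of which is a graded $R\CG_x$-isomorphism. What remains is naturality: for a graded homomorphism $g: M \to M'$ one must check that $\Res_x(g)\circ\psi_{gr}^M = \psi_{gr}^{M'}\circ\HOM_{A_R(\CG)}(RL_x,g)$, and this is immediate by evaluation at $x$, since along either path $f$ is sent to $g(f(x))$. (Equivalently, $\psi_{gr}^M$ is the restriction of the natural isomorphism $\psi$ of \cite[Proposition 7.15]{S1} to a graded submodule, so naturality is inherited.)

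Second, for the adjunction I would apply Lemma~\ref{adjoint} with $A = A_R(\CG)$, $B = R\CG_x$, $M = RL_x$ (a graded $A_R(\CG)$-$R\CG_x$-bimodule), an arbitrary graded $R\CG_x$-module $N$ and an arbitrary graded $A_R(\CG)$-module $P$, obtaining a graded isomorphism
$$
\HOM_{A_R(\CG)}(\Ind_x(N),P) \cong_{gr} \HOM_{R\CG_x}\bigl(N,\HOM_{A_R(\CG)}(RL_x,P)\bigr).
$$
Passing to the $\varepsilon$-homogeneous components, which by \eqref{shift} are exactly the Hom-sets in the respective graded categories, and then composing with the natural isomorphism $\HOM_{A_R(\CG)}(RL_x,-)\cong\Res_x$ of the previous step, one gets a bijection $\Hom_{A_R(\CG)\Gr}(\Ind_x N,P)\cong\Hom_{R\CG_x\Gr}(N,\Res_x P)$ natural in both $N$ and $P$; hence $\Ind_x$ is left adjoint to $\Res_x$.

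Finally, for $\Res_x\CT_\alpha\cong\CT_\alpha\Res_x$ I would argue directly from the definitions rather than through adjoints. For a graded $A_R(\CG)$-module $M$, the module $M(\alpha)$ has the same underlying $A_R(\CG)$-module and action as $M$, only the grading being relabelled, so $\Res_x(M(\alpha))$ and $\Res_x(M)$ coincide as $R\CG_x$-modules by \eqref{res}; comparing gradings, the $\gamma$-component of $\Res_x(M(\alpha))$ is $\Res_x(M)\cap M(\alpha)_\gamma = \Res_x(M)_{\gamma\alpha}$, which is precisely the $\gamma$-component of $(\Res_x M)(\alpha)$. Thus $\Res_x(M(\alpha)) = (\Res_x M)(\alpha)$, and the identity maps assemble into the desired natural isomorphism. (Alternatively, since $\Ind_x$ commutes with the shift functors and $\CT_\alpha^{-1} = \CT_{\alpha^{-1}}$, one may pass to right adjoints.) I do not anticipate a genuine obstacle here; the only points requiring care are matching the grading conventions of Lemma~\ref{adjoint} with \eqref{shift} so that the $\varepsilon$-component of the $\HOM$-isomorphism really is the adjunction bijection, and confirming in the last step that the identification respects the $R\CG_x$-action and not merely the underlying $R$-module.
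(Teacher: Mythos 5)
Your proposal is correct, and for the first two assertions it follows the paper's own route: the natural isomorphism is read off from Lemma~\ref{HOM} (the paper leaves the naturality check implicit, which your evaluation-at-$x$ remark supplies), and the adjunction is obtained from Lemma~\ref{adjoint} with $M=RL_x$ by passing to $\varepsilon$-components via \eqref{shift}, exactly as the paper intends. Where you genuinely diverge is the shift-commutation claim: the paper proves $\Res_x\CT_\alpha\cong\CT_\alpha\Res_x$ by routing both sides through $\HOM_{A_R(\CG)}(RL_x,-)$ and expanding each, via \eqref{shift}, as $\bigoplus_{\beta}\Hom_{A_R(\CG)\Gr}(RL_x,M(\beta\alpha))$, so the identification is uniform with the HOM framework but only componentwise; you instead argue directly from the definitions that $M(\alpha)$ has the same underlying $A_R(\CG)$-module as $M$, hence $\Res_x(M(\alpha))$ and $\Res_x(M)$ coincide as $R\CG_x$-modules by \eqref{res}, and the gradings match because the $\gamma$-component of $\Res_x(M(\alpha))$ is $\Res_x(M)\cap M_{\gamma\alpha}=\bigl(\Res_x(M)(\alpha)\bigr)_\gamma$. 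Your computation is right, and it buys an equality of graded modules on the nose (so naturality and compatibility with the $R\CG_x$-action are immediate), which is arguably cleaner than the paper's $\HOM$-based comparison; the paper's version, in exchange, stays entirely inside the adjunction formalism it has just set up and avoids re-inspecting the definition of $\Res_x$. Either argument is acceptable.
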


\begin{proof}
The adjunction follows from Lemma~\ref{adjoint}. Let $A = A_R(\CG)$. For $M \in A \Gr$, using 
(\ref{shift}), we have
\begin{align*}
(\Res_x\CT_\alpha)(M) &\cong_{gr} \HOM_A(RL_x, M(\alpha)) = \bigoplus_{\beta \in \Gamma} \HOM_A(RL_x, M(\alpha))_{\beta} \\
&= \bigoplus_{\beta \in \Gamma} \Hom_{A \Gr}(RL_x, M(\alpha)(\beta)) = \bigoplus_{\beta \in \Gamma} \Hom_{A \Gr}(RL_x, M(\beta\alpha)),
\end{align*}
while
\begin{align*}
T_{\alpha}\Res_x(M) &\cong_{gr} \HOM_A(RL_x, M)(\alpha) = \bigoplus_{\beta \in \Gamma} \HOM_A(RL_x, M)(\alpha)_{\beta} \\
&= \bigoplus_{\beta \in \Gamma} \HOM_A(RL_x, M)_{\beta\alpha} = \bigoplus_{\beta \in \Gamma} \Hom_{A \Gr}(RL_x, M(\beta\alpha)).
\end{align*}
Hence $\Res_x \CT_{\alpha}$ is isomorphic to $\CT_{\alpha}\Res_x$.
\end{proof}

The graded versions of \cite[Proposition 7.16 and Proposition 7.19]{S1} are stated as follows, respectively. We omit the proofs since they are almost identical to the non-graded ones.

\begin{proposition} \label{res_ind}
Let $N$ be a graded $R\CG_x$-module. Then $\Res_x\Ind_x(N) = x \otimes N$ is naturally graded isomorphic to $N$ as a graded $R\CG_x$-module.
\end{proposition}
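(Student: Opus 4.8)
The plan is to deduce the graded statement from the non-graded one, \cite[Proposition 7.16]{S1}, by checking that the isomorphism produced there is homogeneous and natural. Recall from \cite[Proposition 7.16]{S1} that the $R\CG_x$-linear map
$$
\phi_N \colon N \longrightarrow \Res_x\Ind_x(N), \qquad n \longmapsto x \otimes n,
$$
is an isomorphism of $R\CG_x$-modules, and in particular that $\Res_x\Ind_x(N) = x \otimes N$ as $R$-modules. So it suffices to show that $\phi_N$ restricts to an $R$-isomorphism $N_\gamma \to \Res_x\Ind_x(N)_\gamma$ for every $\gamma \in \Gamma$, and then that $\phi$ is natural in $N$.

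For the grading the key observation is that the unit $x$ lies in $\CG^{(0)} \subseteq \CG_\varepsilon$, hence $x \in L_{x,\varepsilon}$. Therefore, in the graded $A_R(\CG)$-module $\Ind_x(N) = RL_x \otimes_{R\CG_x} N$ (graded as in Lemma~\ref{tensor}), if $n \in N_\gamma$ then $x \otimes n$ lies in $\Ind_x(N)_{\varepsilon\gamma} = \Ind_x(N)_\gamma$. Since the grading on $\Res_x\Ind_x(N)$ is inherited from that of $\Ind_x(N)$, namely $\Res_x\Ind_x(N)_\gamma = \Res_x\Ind_x(N) \cap \Ind_x(N)_\gamma$, we get $\phi_N(N_\gamma) \subseteq \Res_x\Ind_x(N)_\gamma$, so $\phi_N$ is a graded homomorphism.

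For the reverse inclusion, take a homogeneous element $z \in \Res_x\Ind_x(N)_\gamma$. By surjectivity of $\phi_N$ we may write $z = x \otimes n$ for some $n \in N$; decomposing $n = \sum_{\delta} n_\delta$ into homogeneous components gives $z = \sum_\delta (x \otimes n_\delta)$ with $x \otimes n_\delta \in \Ind_x(N)_\delta$. Since $z \in \Ind_x(N)_\gamma$, uniqueness of homogeneous decomposition in $\Ind_x(N)$ forces $x \otimes n_\delta = 0$ for all $\delta \ne \gamma$; applying injectivity of $\phi_N$ to $n_\delta$ yields $n_\delta = 0$ for $\delta \ne \gamma$, so $n = n_\gamma \in N_\gamma$ and $z = \phi_N(n_\gamma) \in \phi_N(N_\gamma)$. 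Hence $\phi_N$ is a graded $R\CG_x$-isomorphism. Naturality is immediate: for a graded homomorphism $f \colon N \to N'$ one has $\Res_x\Ind_x(f)(x \otimes n) = x \otimes f(n)$, i.e.\ $\Res_x\Ind_x(f) \circ \phi_N = \phi_{N'} \circ f$, which is the naturality square already established in $R\CG_x \Mod$, now read in $R\CG_x \Gr$.

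I expect no serious obstacle here beyond bookkeeping with homogeneous components; the single step deserving genuine care is the reverse inclusion above, where homogeneity of $z$ together with injectivity of the non-graded isomorphism is used to force the preimage $n$ to be homogeneous of degree $\gamma$. Everything else follows directly from $x$ being $\varepsilon$-homogeneous and from the compatibility of the tensor product and of $\Res_x$ with the gradings recorded earlier in this section.
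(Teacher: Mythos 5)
Your proposal is correct and is essentially the approach the paper intends: the paper omits the proof precisely because it is ``almost identical to the non-graded one'' of \cite[Proposition 7.16]{S1}, and your argument carries out exactly that upgrade, using that $x$ is $\varepsilon$-homogeneous so the canonical map $n \mapsto x \otimes n$ is graded, and that a bijective graded homomorphism has a graded inverse. The grading and naturality bookkeeping you supply is accurate and consistent with the gradings on $\Ind_x$ and $\Res_x$ defined earlier in the section.
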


\begin{proposition} \label{ind_simple}
Let $N$ be a graded simple $R\CG_x$-module. Then $\Ind_x(N)$ is a graded simple $A_R(\CG)$-module. 
\end{proposition}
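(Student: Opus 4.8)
The plan is to mimic the non-graded argument of \cite[Proposition 7.19]{S1}, upgrading each ingredient to the graded setting using the results already established. Let $M = \Ind_x(N)$, a graded $A_R(\CG)$-module by the preceding proposition. First I would observe that $M$ is nonzero: by Proposition~\ref{res_ind} we have $\Res_x(M) \cong_{gr} N \neq 0$, and $N$ being graded simple is in particular nonzero, so $M \neq 0$. Thus it suffices to show that every nonzero graded submodule $M' \subseteq M$ equals $M$.

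So let $M'$ be a nonzero graded $A_R(\CG)$-submodule of $M$. The key step is to show that $\Res_x(M') \neq 0$, because then $\Res_x(M')$ is a nonzero graded $R\CG_x$-submodule of $\Res_x(M) \cong_{gr} N$ (here one uses that $\Res_x$ applied to the inclusion $M' \hookrightarrow M$ is the inclusion $\Res_x(M') \hookrightarrow \Res_x(M)$, which is a graded homomorphism), and graded simplicity of $N$ forces $\Res_x(M') = \Res_x(M)$; unwinding Proposition~\ref{res_ind}, this means $x \otimes N \subseteq M'$, and since $RL_x$ is generated as an $A_R(\CG)$-module by the element $x$ (the isotropy unit, via \cite[Corollary 7.11]{S1}), we get $A_R(\CG)(x \otimes N) = RL_x \otimes_{R\CG_x} N = M$, whence $M' = M$. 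To see that $\Res_x(M') \neq 0$: pick a nonzero homogeneous element $m \in M'_\gamma$. Writing $m$ as a finite sum $\sum_i t_i \otimes n_i$ with $t_i \in L_x$ and $n_i \in N$ homogeneous, one argues as in \cite{S1} that there is a compact open bisection $U$ separating the (finitely many) relevant points so that $\1_U m$ lands in $\Res_x(M')$ and is nonzero; since the homogeneous compact open bisections form a basis (and here the relevant $U$ can be chosen inside $\CG^{(0)}$, hence $\varepsilon$-homogeneous), this element is homogeneous of degree $\gamma$ and lies in $M'$, so $\Res_x(M') \neq 0$.

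The main obstacle I expect is precisely the last point — verifying that the non-graded "separation" argument producing a nonzero element of $\Res_x(M')$ respects the grading. Concretely, in \cite{S1} one chooses $U \subseteq \CG^{(0)}$ compact open containing $x$ but avoiding the (finitely many) codomains $c(t_i)$ of the basis elements $t_i$ with $t_i \notin L_x \cdot$(transversal through $x$), so that $\1_U m$ is a nonzero element annihilated by shrinking — the point being that $\1_U m \in \bigcap_V \1_V M'$. Since $U \subseteq \CG^{(0)}$ and $\CG^{(0)}$ is $\varepsilon$-homogeneous, $\1_U \in A_R(\CG)_\varepsilon$, so multiplication by $\1_U$ preserves degrees; thus if $m$ is homogeneous of degree $\gamma$ then $\1_U m$ is homogeneous of degree $\gamma$, and it lies in $M'$ because $M'$ is a submodule. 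Everything else — nonvanishing of $\1_U m$, and the identity $\1_U m \in \Res_x(M')$ — is exactly the computation in the proof of \cite[Proposition 7.19]{S1}, which does not interact with the grading at all. Hence the proof is obtained from the non-graded one verbatim, with the single additional remark that $\1_U$ is $\varepsilon$-homogeneous, and I would present it that way: reduce to showing $\Res_x(M') \neq 0$ for a nonzero graded submodule $M'$, invoke Proposition~\ref{res_ind} and graded simplicity of $N$, and cite the cyclicity of $RL_x$ to conclude $M' = M$.
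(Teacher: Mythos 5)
Your proposal is essentially the paper's own (omitted) proof: the paper explicitly says the argument is the graded adaptation of \cite[Proposition 7.19]{S1}, namely reduce to showing $\Res_x(M') \neq 0$ for a nonzero graded submodule $M'$ of $\Ind_x(N)$, then use Proposition~\ref{res_ind} together with graded simplicity of $N$ and the cyclicity of $RL_x$ to conclude $M' = \Ind_x(N)$. One harmless slip: the nonzero element of $\Res_x(M')$ is produced by multiplying by $\1_V$ for a (homogeneous) compact open bisection $V$ containing $t_1^{-1}$, not a bisection inside $\CG^{(0)}$, so that element has degree $\kappa(t_1)^{-1}\gamma$ rather than $\gamma$ --- but its homogeneity is never needed, since $\Res_x(M')$ is automatically a graded submodule of $\Res_x(M) \cong_{gr} N$ by the functoriality of $\Res_x$ on graded modules.
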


\begin{proposition}\cite[Proposition 7.20]{S1} \label{not_iso}
Suppose that $x, y \in \CG^{(0)}$ are in distinct orbits. Then induced modules of the form $\Ind_x(N)$ and $\Ind_y(N')$ are not isomorphic (so they are not graded isomorphic as well).
\end{proposition}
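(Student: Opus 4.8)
The plan is to play the restriction functors off against the observation that $\Ind_x(N)$ is, in a precise sense, supported only on the orbit $\CO_x$. Since a graded isomorphism is in particular an isomorphism of $A_R(\CG)$-modules, it suffices to show that $\Ind_x(N)$ and $\Ind_y(N')$ are not isomorphic as $A_R(\CG)$-modules. Note first that we must (and do) assume $N \neq 0$ and $N' \neq 0$: if, say, $N' = 0$ then $\Ind_y(N') = 0$, and from a hypothetical isomorphism $\Ind_x(N) \cong 0$ one gets $N \cong \Res_x\Ind_x(N) = 0$ by \cite[Proposition 7.16]{S1} (equivalently Proposition~\ref{res_ind}), so both induced modules vanish and the statement is vacuous.

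The heart of the argument is a lemma I would isolate: \emph{if $y \notin \CO_x$, then $\Res_y\bigl(\Ind_x(N)\bigr) = 0$.} To prove it, take an arbitrary $m \in \Ind_x(N) = RL_x \otimes_{R\CG_x} N$ and write it as a finite sum $m = \sum_{i=1}^{k} t_i \otimes n_i$ with $t_i \in L_x$ and $n_i \in N$, which is possible since $L_x$ is an $R$-basis of $RL_x$. Each codomain $c(t_i)$ lies in $\CO_x$, hence differs from $y$. Using that $\CG^{(0)}$ is Hausdorff and admits a basis of compact open sets, choose a compact open $U \subseteq \CG^{(0)}$ with $y \in U$ and $c(t_i) \notin U$ for $i = 1, \dots, k$. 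By Proposition~\ref{bimodule}, for $t \in L_x$ one has $\1_U t = t$ when $c(t) \in U$ and $\1_U t = 0$ otherwise, so $\1_U m = \sum_{i \,:\, c(t_i) \in U} t_i \otimes n_i = 0$. Since $UU = U$, the element $\1_U$ is idempotent in $A_R(\CG)$, so $\1_U\,\Ind_x(N) = \{\, z \in \Ind_x(N) \mid \1_U z = z \,\}$. Hence if $m \in \Res_y\bigl(\Ind_x(N)\bigr) = \bigcap_{V \ni y} \1_V\,\Ind_x(N)$, then in particular $m \in \1_U\,\Ind_x(N)$, forcing $m = \1_U m = 0$. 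Thus $\Res_y\bigl(\Ind_x(N)\bigr) = 0$.

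The proposition then follows immediately. If $\varphi \colon \Ind_x(N) \to \Ind_y(N')$ were an isomorphism of $A_R(\CG)$-modules, applying the functor $\Res_y$ and \cite[Proposition 7.16]{S1} would give
$$
0 = \Res_y\bigl(\Ind_x(N)\bigr) \;\cong\; \Res_y\bigl(\Ind_y(N')\bigr) \;\cong\; N',
$$
contradicting $N' \neq 0$. Hence no such $\varphi$ exists, and a fortiori $\Ind_x(N) \not\cong_{gr} \Ind_y(N')$.

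The only non-formal ingredient — and the step I expect to need the most care — is the separation argument producing the compact open set $U$: it is exactly here that ampleness of $\CG$ (hence a basis of compact open sets on the Hausdorff unit space $\CG^{(0)}$) and the finiteness of the support of $m$ are used. Everything else (the support computation via Proposition~\ref{bimodule}, idempotency of $\1_U$, functoriality of $\Res_y$, and the natural isomorphism $\Res_y\Ind_y \cong \Id$) is purely formal.
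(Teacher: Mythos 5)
Your proof is correct; the paper offers no argument of its own here (it simply cites \cite[Proposition 7.20]{S1}), and your route---showing $\Res_y\bigl(\Ind_x(N)\bigr)=0$ for $y\notin\CO_x$ by separating $y$ from the finitely many codomains $c(t_i)$ with a compact open $U\subseteq\CG^{(0)}$, then applying $\Res_y$ together with $\Res_y\Ind_y\cong\Id$---is essentially Steinberg's original argument, with all the key points (the computation $\1_U t$ via Proposition~\ref{bimodule}, idempotency of $\1_U$, Hausdorffness and the compact open basis of $\CG^{(0)}$) used correctly. Your remark that $N$ and $N'$ must be nonzero is also right: this hypothesis appears in Steinberg's statement and is implicit in the paper's usage, where the modules are (graded) simple.
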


\begin{definition}
Assume $x, y \in \CG^{(0)}$ are in the same orbit. We set
$$
\Gamma_{xy} = \{\alpha \in \Gamma \mid \text{ there exists } z \in \CG_\alpha \text { with } d(z) = x, c(z) = y\}.
$$
\end{definition}
Thus $\Gamma_{xy}$ consists of degrees of all morphisms from $x$ to $y$. Observe that $\Gamma_{xy}$ is non-empty.

\begin{proposition} \label{ind_orbit}
If $x, y \in \CG^{(0)}$ are in the same orbit and $N$ is a graded $R\CG_x$-module, then $\Ind_x(N) \cong_{gr} \Ind_y(N)(\alpha^{-1}) = \Ind_y(N(\alpha^{-1}))$ for all $\alpha \in \Gamma_{xy}$. 
\end{proposition}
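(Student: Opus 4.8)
The plan is to build an explicit graded isomorphism from the ``right translation by a connecting morphism'' map, and then read off the shift. By the proposition asserting that $\Ind_x$ commutes with the shift functors we have $\Ind_y(N)(\alpha^{-1}) = \Ind_y(N(\alpha^{-1}))$, so it is enough to show $\Ind_x(N) \cong_{gr} \Ind_y(N)(\alpha^{-1})$. Fix $\alpha \in \Gamma_{xy}$ and choose $z \in \CG_{\alpha}$ with $d(z) = x$ and $c(z) = y$. Conjugation by $z$ is a group isomorphism $\phi_z \colon \CG_x \to \CG_y$, $g \mapsto zgz^{-1}$; since $\Gamma$ is abelian, $\kappa(zgz^{-1}) = \kappa(g)$, so $\phi_z$ maps $\CG_{x,\gamma}$ onto $\CG_{y,\gamma}$ for every $\gamma \in \Gamma$ and extends to a graded isomorphism $\phi_z \colon R\CG_x \to R\CG_y$. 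We regard $N$ as a graded $R\CG_y$-module through $\phi_z^{-1}$, i.e.\ $g \cdot n := (z^{-1}gz)n$ for $g \in \CG_y$ and $n \in N$; this is the module meant by $\Ind_y(N)$. (Replacing $z$ by another element of $\CG_\alpha$ from $x$ to $y$ changes $\phi_z$ only by an inner automorphism of $R\CG_x$, so $\Ind_y(N)$ is well defined up to graded isomorphism for each $\alpha$.)

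Next I would study the $R$-linear map $\Phi_z \colon RL_y \to RL_x$ given on the basis by $\Phi_z(t) = tz$ (this is legitimate: $d(t) = y = c(z)$, and $d(tz) = d(z) = x$). It is bijective, with inverse $s \mapsto sz^{-1}$. Using the action formula together with $c(tz) = c(t)$ (or Proposition~\ref{bimodule}), one checks $\1_U(tz) = (\1_U t)z$ for all $U \in \Bco(\CG)$, so $\Phi_z$ is a homomorphism of left $A_R(\CG)$-modules. Since $\kappa(tz) = \kappa(t)\alpha$, it sends $RL_{y,\gamma}$ onto $RL_{x,\gamma\alpha}$, hence is a graded isomorphism $RL_y \cong_{gr} RL_x(\alpha)$. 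Finally, for $t \in L_y$ and $g \in \CG_y$,
$$
\Phi_z(tg) = tgz = (tz)(z^{-1}gz) = \Phi_z(t)\,\phi_z^{-1}(g),
$$
so $\Phi_z$ intertwines the right $R\CG_y$-action on $RL_y$ with the right $R\CG_x$-action on $RL_x$ along $\phi_z^{-1}$. Thus $\Phi_z$ is an isomorphism of graded $A_R(\CG)$-$R\CG_y$-bimodules from $RL_y$ onto $RL_x(\alpha)$, where $RL_x$ carries the right $R\CG_y$-structure transported through $\phi_z$.

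It then remains to tensor with $N$ over $R\CG_y$. The assignment $t \otimes n \mapsto \Phi_z(t) \otimes n = tz \otimes n$ is a well-defined $R$-linear map $RL_y \otimes_{R\CG_y} N \to RL_x \otimes_{R\CG_x} N$: the relation $tg \otimes n = t \otimes (z^{-1}gz)n$ in the source is carried to $tgz \otimes n = tz(z^{-1}gz) \otimes n = tz \otimes (z^{-1}gz)n$ in the target, which holds over $R\CG_x$. It is bijective (its inverse is induced by $\Phi_z^{-1}$) and $A_R(\CG)$-linear because $\Phi_z$ is, hence an isomorphism of $A_R(\CG)$-modules $\Ind_y(N) \to \Ind_x(N)$. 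It maps the homogeneous component $\Ind_y(N)_\mu$ --- spanned by the $t \otimes n$ with $t \in RL_{y,\gamma}$, $n \in N_\delta$, $\gamma\delta = \mu$ --- into $\Ind_x(N)_{\mu\alpha}$, so it is in fact a graded isomorphism $\Ind_y(N) \cong_{gr} \Ind_x(N)(\alpha)$; applying the shift functor $\CT_{\alpha^{-1}}$ to both sides gives $\Ind_y(N)(\alpha^{-1}) \cong_{gr} \Ind_x(N)$, which is the assertion. The only point requiring real care is the bookkeeping of the grading shift --- confirming that the twist that appears is $\alpha^{-1}$ and not $\alpha$ --- and, relatedly, checking that $\Phi_z$ respects the two right actions so that $\Phi_z \otimes \Id$ descends to the relative tensor products; the remaining verifications are routine.
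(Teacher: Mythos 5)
Your proposal is correct and follows essentially the same route as the paper: choose a connecting morphism $z \in \CG_\alpha$ from $x$ to $y$, transport the $R\CG_x$-structure on $N$ to a graded $R\CG_y$-structure by conjugation (using that $\Gamma$ is abelian), and realize the graded isomorphism by right translation by $z$ on the level of $t \otimes n$ (the paper writes the inverse map $t \otimes n \mapsto tz^{-1} \otimes n$). Your version simply spells out the bimodule and well-definedness verifications that the paper leaves implicit, and the degree bookkeeping giving the shift $\alpha^{-1}$ matches the paper's conventions.
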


\begin{proof}
For each $\alpha \in \Gamma_{xy}$, there exists $z \in \CG_\alpha$ such that $d(z) = x$ and $c(z) = y$. Then $R\CG_x$ and $ R\CG_y$ are graded isomorphic, and $N$ can be made into a graded $R\CG_y$-module by setting $un = z^{-1}uz n$ for $u \in \CG_y$ and $n \in N$ (note that $\Gamma$ is abelian). Hence $\Ind_x(N) \cong_{gr} \Ind_y(N)(\alpha^{-1})$ via the graded isomorphism $t \otimes n \mapsto tz^{-1} \otimes n$ for $t \in L_x$ and $n \in N$.
\end{proof}

\begin{proposition} \label{ind_iso}
Let $y \in \CO_x$ and $N, N'$ be graded $R\CG_x$-modules. Then $\Ind_x(N) \cong_{gr} \Ind_y(N')$ if and only if $N \cong_{gr} N'(\alpha)$ for some (and for all) $\alpha \in \Gamma_{xy}$.
\end{proposition}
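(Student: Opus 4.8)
The plan is to deduce both directions from two facts already in hand: the orbit-transfer isomorphism of Proposition~\ref{ind_orbit}, and the isomorphism $\Res_x\Ind_x \cong \Id$ of Proposition~\ref{res_ind}, together with the elementary observation that the functor $\Res_x$ sends graded isomorphisms to graded isomorphisms.

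First I would isolate the key identity. Fix $\alpha \in \Gamma_{xy}$ and apply Proposition~\ref{ind_orbit} not to $N'$ itself but to the shifted $R\CG_x$-module $N'(\alpha)$ (with the \emph{same} $\alpha \in \Gamma_{xy}$); since $\CT_{\alpha^{-1}}\CT_{\alpha} = \CT_{\varepsilon} = \Id$, this yields
\begin{equation*}
\Ind_x\big(N'(\alpha)\big) \;\cong_{gr}\; \Ind_y\big((N'(\alpha))(\alpha^{-1})\big) \;=\; \Ind_y(N'),
\end{equation*}
where the $R\CG_y$-module structure on $N'$ appearing on the right is exactly the one (induced by a chosen degree-$\alpha$ morphism $x \to y$) that is understood when one writes $\Ind_y(N')$ in the statement. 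With this identity at hand, the backward implication is immediate: if $N \cong_{gr} N'(\alpha)$ for some $\alpha \in \Gamma_{xy}$, then applying $\Ind_x$ and the displayed isomorphism gives $\Ind_x(N) \cong_{gr} \Ind_x\big(N'(\alpha)\big) \cong_{gr} \Ind_y(N')$.

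For the forward implication, suppose $\Ind_x(N) \cong_{gr} \Ind_y(N')$ and fix an arbitrary $\alpha \in \Gamma_{xy}$. Combining with the displayed isomorphism gives $\Ind_x(N) \cong_{gr} \Ind_x\big(N'(\alpha)\big)$. Now apply the functor $\Res_x$ and use Proposition~\ref{res_ind} on each side:
\begin{equation*}
N \;\cong_{gr}\; \Res_x\Ind_x(N) \;\cong_{gr}\; \Res_x\Ind_x\big(N'(\alpha)\big) \;\cong_{gr}\; N'(\alpha).
\end{equation*}
Since $\alpha$ was an arbitrary element of $\Gamma_{xy}$, this proves the assertion in its ``for all'' form, hence a fortiori in the ``for some'' form.

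The argument is essentially a diagram chase, so I do not expect a serious obstacle; the one point that needs attention is the bookkeeping of module structures along the orbit — specifically, verifying that the $R\CG_y$-structure transported to $N'$ (equivalently, the meaning of $N'(\alpha)$ as an $R\CG_x$-module) is well defined up to graded isomorphism independently of the chosen connecting morphism of degree $\alpha$. This reduces to noting that two such choices differ by an element of $\CG_{x,\varepsilon}$, whose action on $N'$ supplies the required degree-preserving $R\CG_y$-linear isomorphism; this can be folded into the preliminary identity, or avoided altogether by fixing one connecting morphism of each admissible degree once and for all, as the surrounding discussion implicitly does.
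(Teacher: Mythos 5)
Your proof is correct and follows essentially the same route as the paper: both directions hinge on the shifted reformulation of Proposition~\ref{ind_orbit}, namely $\Ind_y(N') \cong_{gr} \Ind_x(N'(\alpha))$ for $\alpha \in \Gamma_{xy}$, with sufficiency obtained by applying $\Ind_x$ and necessity by applying $\Res_x$ and invoking Proposition~\ref{res_ind}. Your closing remark on the transported $R\CG_y$-structure is a harmless elaboration of what the paper leaves implicit.
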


\begin{proof}
An equivalent formulation of Proposition~\ref{ind_orbit} for the module $N'$ is 
$$
\Ind_y(N') \cong_{gr} \Ind_x(N')(\alpha) = \Ind_x(N'(\alpha))
$$ 
for all $\alpha \in \Gamma_{xy}$. Now if $N \cong_{gr} N'(\alpha)$ for some $\alpha \in \Gamma_{xy}$, then we have
$$
\Ind_x(N) \cong_{gr} \Ind_x(N'(\alpha)) \cong_{gr} \Ind_y(N').
$$
For the converse, applying the functor $\Res_x$ and invoking Proposition~\ref{res_ind} yields
$$
N \cong_{gr} \Res_x\Ind_x(N) \cong_{gr} \Res_x\Ind_y(N') \cong_{gr} \Res_x\Ind_x(N'(\alpha)) \cong_{gr} N'(\alpha)
$$
for all $\alpha \in \Gamma_{xy}$. This completes the proof.
\end{proof}

Following \cite{S1}, an object $x \in \CG^{(0)}$ is said to have {\it finite index} if its orbit is a finite set. Also, a nonzero $A_R(\CG)$-module $M$ is called {\it spectral} if $\Res_x(M) \neq 0$ for some $x \in \CG^{(0)}$.

Now we can state the main result of this section, which extends \cite[Theorem 7.26]{S1} to the case of graded simple modules.

\begin{theorem} \label{main1}
Let $\CG$ be a $\Gamma$-graded ample groupoid. Then each spectral graded simple $A_R(\CG)$-module is of the form $\Ind_x(N)$ for a pair $(x, N)$, where $x \in \CG^{(0)}$ and $N$ is a graded simple $R\CG_x$-module. Two pairs $(x, N)$ and $(y, N')$ give rise to isomorphic graded $A_R(\CG)$-modules if and only if $x, y$ are in the same orbit and $N \cong_{gr} N'(\alpha)$ as graded $R\CG_x$-modules for some $\alpha \in \Gamma_{xy}$. When $R$ is a field, the finite-dimensional graded simple $A_R(\CG)$-modules correspond to those pairs $(x, N)$, where $x$ is of finite index and $N$ is a finite-dimensional graded simple $R\CG_x$-module.
\end{theorem}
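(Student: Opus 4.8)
The plan is to derive the three assertions from the preparatory results of this section, the only new ingredient being an argument that finite-dimensionality forces spectrality. For the first assertion, let $M$ be a spectral graded simple $A_R(\CG)$-module, choose $x\in\CG^{(0)}$ with $\Res_x(M)\neq 0$, and put $N=\Res_x(M)$, which is graded simple by Proposition~\ref{res_simple}. By Corollary~\ref{adjunction} the pair $(\Ind_x,\Res_x)$ is an adjunction, so the counit is a graded homomorphism $\Ind_x(N)=\Ind_x\Res_x(M)\to M$; under the adjunction bijection $\Hom_{A_R(\CG) \Gr}(\Ind_x\Res_x(M),M)\cong\Hom_{R\CG_x \Gr}(\Res_x(M),\Res_x(M))$ it corresponds to $\Id_N$ and is therefore nonzero. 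Since $\Ind_x(N)$ is graded simple by Proposition~\ref{ind_simple} and $M$ is graded simple, a nonzero graded homomorphism between them has zero kernel and full image (both being graded submodules), hence is a graded isomorphism; thus $M\cong_{gr}\Ind_x(N)$.

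The second assertion is then immediate: if $\Ind_x(N)\cong_{gr}\Ind_y(N')$, these modules are in particular isomorphic, so $x$ and $y$ lie in the same orbit by Proposition~\ref{not_iso}, and then $N\cong_{gr}N'(\alpha)$ for all $\alpha\in\Gamma_{xy}$ by Proposition~\ref{ind_iso}; the converse is again Proposition~\ref{ind_iso}. For the last assertion, with $R=K$ a field, the first step is the bookkeeping observation that $t\mapsto c(t)$ identifies the set $L_x/\CG_x$ of right $\CG_x$-orbits with the orbit $\CO_x$; hence $RL_x$ is a free right $R\CG_x$-module of rank $|\CO_x|$, and $\Ind_x(N)=RL_x\otimes_{R\CG_x}N\cong N^{(|\CO_x|)}$ as $K$-vector spaces. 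Consequently, for a nonzero finite-dimensional $N$, the module $\Ind_x(N)$ is finite-dimensional precisely when $\CO_x$ is finite, i.e.\ when $x$ has finite index; together with Propositions~\ref{ind_simple} and~\ref{res_ind} this shows that each pair $(x,N)$ with $x$ of finite index and $N$ finite-dimensional graded simple yields a finite-dimensional graded simple module, which is moreover spectral since $\Res_x\Ind_x(N)=N\neq 0$. Conversely, given a finite-dimensional graded simple $M$, once we know it is spectral the first assertion gives $M\cong_{gr}\Ind_x(\Res_xM)$ with $N=\Res_xM$ graded simple; then $N\subseteq M$ is finite-dimensional, and the dimension count above forces $\CO_x$ to be finite.

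The step I expect to be the main obstacle is precisely this last point: showing that a nonzero finite-dimensional graded simple $M$ is automatically spectral. I would argue as follows. Using that $M$ is unital, each basis vector of $M$, being a finite $A_R(\CG)$-combination of elements of $M$, is fixed by $\1_{U}$ for some compact open $U\subseteq\CG^{(0)}$ (take $U$ to be the union of the codomains of the finitely many bisections that occur, and use $\1_W=\1_{c(W)}\1_W$), so there is a compact open $U_0\subseteq\CG^{(0)}$ with $\1_{U_0}M=M$. Since $\1_UM\cap\1_VM=\1_{U\cap V}M$, the family $\{\1_UM : U\ni x\text{ compact open}\}$ is closed under finite intersections; being a family of subspaces of the finite-dimensional space $M$, it has a least element, which must equal $\Res_x(M)$. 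If $M$ were non-spectral, every $x\in\CG^{(0)}$ would have a compact open neighbourhood $U_x$ with $\1_{U_x}M=0$; extracting a finite subcover of the compact set $U_0$ by the sets $U_x\cap U_0$ and applying inclusion--exclusion for characteristic functions of compact open subsets of $\CG^{(0)}$ would then give $\1_{U_0}M=0$, contradicting $\1_{U_0}M=M\neq 0$. Hence $M$ is spectral, and the proof is complete.
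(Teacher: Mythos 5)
Your proposal is correct, and for the core of the theorem it coincides with the paper's own argument: you obtain a nonzero graded homomorphism $\Ind_x\Res_x(M)\to M$ from the adjunction of Corollary~\ref{adjunction} (the image of the identity under the graded Hom-isomorphism), and conclude by graded simplicity of both sides (Propositions~\ref{res_simple} and~\ref{ind_simple}); the isomorphism criterion is, as in the paper, exactly Propositions~\ref{not_iso} and~\ref{ind_iso}. Where you genuinely diverge is the finite-dimensional statement: the paper dismisses it in one line by citing \cite[Theorem 7.26]{S1}, whereas you prove it directly — the count $\dim_K \Ind_x(N)=|\CO_x|\cdot\dim_K N$ coming from the identification of $L_x/\CG_x$ with $\CO_x$ (so that $RL_x$ is free of rank $|\CO_x|$ over $R\CG_x$), together with the compactness argument that any nonzero finite-dimensional unital module is spectral: a compact open $U_0\subseteq\CG^{(0)}$ with $\1_{U_0}M=M$, the identity $\1_UM\cap\1_VM=\1_{U\cap V}M$ (valid because $\1_U$ acts as the identity on $\1_UM$), minimality in finite dimension giving $\Res_x(M)=\1_{U_x}M$ for a single $U_x$, and a finite subcover plus inclusion--exclusion, each term of which is supported inside some $U_{x_i}$ and hence annihilates $M$. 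All these steps check out. What your route buys is self-containedness, and it also quietly repairs a small mismatch in the paper's citation: Steinberg's Theorem~7.26 is a statement about non-graded simple modules, so transferring it to graded simple modules requires precisely your observations that spectrality of a finite-dimensional module is independent of the grading and that the dimension count is grading-blind; the paper's citation is of course shorter.
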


\begin{proof}
Proposition~\ref{ind_simple} shows that the modules of the form $\Ind_x(N)$ are graded simple. Moreover, by Propositions~\ref{not_iso} and \ref{ind_iso}, two modules $\Ind_x(N)$ and $\Ind_y(N')$ are graded isomorphic if and only if $x, y$ are in the same orbit and $N \cong_{gr} N'(\alpha)$ for some $\alpha \in \Gamma_{xy}$. It remains to prove that all spectral graded simple modules are (graded isomorphic to) induced modules of this form. Let $M$ be a spectral graded simple $A_R(\CG)$-module, then $\Res_x(M) \neq 0$ for some $x \in \CG^{(0)}$. By Proposition~\ref{res_simple}, $\Res_x(M)$ is a graded simple $R\CG_x$-module. Using Corollary~\ref{adjunction}, we have a graded isomorphism of $R$-modules
$$
\HOM_{A_R(\CG)}(\Ind_x\Res_x(M), M) \cong_{gr} \HOM_{R\CG_x}(\Res_x(M), \Res_x(M)).
$$
Under this isomorphism, the identity map on $\Res_x(M)$ corresponds to a non-zero graded homomorphism $f: \Ind_x\Res_x(M) \to M$. As both modules are graded simple, it follows that $f$ is an isomorphism. This completes the proof of the first part of the theorem. The assertion about finite-dimensional graded simple modules follows from \cite[Theorem 7.26]{S1}.
\end{proof}

Specialising Theorem~\ref{main1} to an ample groupoid $\CG$ graded trivially by the trivial group $\{\varepsilon\}$, we obtain \cite[Theorem 7.26]{S1}.

\section{Simple and graded simple modules over Leavitt path algebras} \label{4}

In this section, we apply the results of Section~\ref{3} to study graded and non-graded simple modules over Leavitt path algebras.

\subsection{Leavitt path algebras}

Leavitt path algebras \cite{AA, AMP} are $\Z$-graded algebras presented by generators and relations that are determined by a directed graph. We recall some notions on Leavitt path algebras that will be needed subsequently; in particular, we review the description of a Leavitt path algebra as a (graded) Steinberg algebra of some graph groupoid. For a comprehensive account of the theory of Leavitt path algebras, we refer to \cite{AAS}.

A (directed) {\it graph} $E=(E^0, E^1, s, r)$ consists of two sets $E^0$ and $E^1$ together with maps $s, r: E^1 \to E^0$. The elements of $E^0$ are called {\it vertices} and the elements of $E^1$ are called {\it edges} of $E$. We make no restrictions on the cardinality of $E^0$ or $E^1$. For an edge $e \in E^1$, its {\it source} and {\it range} are $s(e)$ and $r(e)$, respectively. A vertex $v \in E^0$ is called a {\it sink} if it emits no edges, i.e., if $s^{-1}(v) = \emptyset$, and $v$ is called an {\it infinite emitter} if $s^{-1}(v)$ is an infinite set. A vertex which is a sink or an infinite emitter is said to be {\it singular}. If a vertex emits more than one edge, then we say there is a {\it bifurcation} at that vertex. The graph $E$ is called {\it row-finite} if $E$ has no infinite emitters. 

A {\it finite path} in $E$ is a finite sequence of edges $\mu = e_1 e_2 \cdots e_n$ with $r(e_i) = s(e_{i + 1})$ for all $i = 1, \ldots,  n-1$. In this case, we set $s(\mu) = s(e_1)$, $r(\mu) = r(e_n)$ and call $|\mu| = n>0$ the {\it length} of $\mu$. An {\it exit} for $\mu$ is an edge $e$ such that $s(e) = s(e_i)$ for some $i$ but $e \neq e_i$. The finite path $\mu$ is called {\it closed} if $s(\mu) = r(\mu)$; then $\mu$ is said to be based at the vertex $s(\mu)$. Following \cite{Chen}, a closed path $\mu$ is called {\it simple} in case $\mu \neq c^n$ for any closed path $c$ and integer $n \geq 2$. The closed path $\mu$ is called a {\it cycle} if it does not pass through any of its vertices twice. By convention, a vertex $v \in E^0$ is considered as a finite path of length 0 with $v$ as the source and the range. We denote by $F(E)$ the set of all finite paths (including the paths of length 0) in the graph $E$. For $v \in E^0$, let $T_E(v) = \{w \in E^0 \mid \text{there is a finite path from $v$ to $w$}\}$. 

An {\it infinite path} in $E$ is an infinite sequence of edges $p = e_1 e_2 \cdots$ with $r(e_i) = s(e_{i + 1})$ for all $i$. Again, $s(p) = s(e_1)$ is called the source of $p$. For instance, if $c$ is a closed path, then $c^{\infty} = ccc\cdots$ is an infinite path. We denote by $E^{\infty}$ the set of all infinite paths in $E$. If $p \in F(E) \cup E^{\infty}$ and $\mu \in F(E)$ are such that $p = \mu q$ for some $q \in F(E) \cup E^{\infty}$, then we say that $\mu$ is an {\it initial subpath} of $p$. 

For each $e \in E^1$, we introduce a symbol $e^*$ and call it a ghost edge. We define $s(e^*) = r(e)$ and $r(e^*) = s(e)$. For $v \in E^0$ and $\mu = e_1e_2 \cdots e_n \in F(E)$, let $v^* = v$ and $\mu^* = e_n^*\cdots e_2^*e_1^*$.

Let $K$ be any field. Given an arbitrary graph $E$, the Leavitt path algebra of $E$ with coefficients in $K$, denoted $L_K(E)$,  is the free $K$-algebra generated by the set $\{v, e, e^* \mid v \in E^0, e \in E^1\}$ subject to the following relations:
\begin{align*}
\text{(V)} \quad & v w = \delta_{v, w} v \text{ for all } v, w \in E^0,\\
\text{(E1)} \quad & s(e)e = e = er(e) \text{ for all } e \in E^1,\\
\text{(E2)} \quad & r(e)e^* = e^* = e^*s(e) \text{ for all } e \in E^1,\\
\text{(CK1)} \quad & e^*f = \delta_{e,f} r(e) \text{ for all } e, f \in E^1,\\
\text{(CK2)} \quad & v = \sum_{e \in s^{-1}(v)} ee^* \text{ for every non-singular vertex } v \in E^0.
\end{align*}
It is also possible to define the Leavitt path algebra of $E$ with coefficients in a commutative ring $R$. However, for our purpose, we consider only Leavitt path algebras over a field.

It is well known that the algebra $L_K(E)$ is spanned as a $K$-vector space by the set $\{\mu\nu^* \mid \mu, \nu \in F(E) \text{ with } r(\mu) = r(\nu)\}$. Moreover, $L_K(E)$ has a canonical $\Z$-grading with the homogeneous component of degree $k$ spanned by
$$
\{\mu\nu^* \mid \mu, \nu \in F(E), r(\mu) = r(\nu), |\mu| - |\nu| = k\}.
$$

We now describe $L_K(E)$ as a graded Steinberg algebra associated with a graded groupoid $\CG_E$.
Let
$$
\partial E = E^\infty \cup \{\mu \in F(E) \mid r(\mu) \text{ is singular}\}.
$$

The following standard definition extends the notion of tail-equivalence for infinite paths in \cite{Chen} to all paths in $\partial E$; see, e.g., \cite[Definition 2.8]{Rigby}.

\begin{definition} \label{tail}
Let $k \in \Z$ and $x,y\in \partial E$. Then $x$ is said to be tail-equivalent to $y$ with lag $k$, denoted $x\sim_k y$, if there exist $\mu,\nu\in F(E)$ and $p \in \partial E$ with $r(\mu) = r(\nu) = s(p)$ such that $x=\mu p, y=\nu p$ and $|\mu| - |\nu| = k$. 
\end{definition}

Thus two paths in $\partial E$ are tail-equivalent if they differ only by initial subpaths, and the lag is the difference of lengths of these subpaths. However, we note that the lag is not necessarily unique. For example, if $c$ is a closed path of length one, then $c^{\infty} \sim_k c^{\infty}$ for any integer $k$, as we may write $c^{\infty} = s(c)c^{\infty} = c^k c^{\infty}$ (here $k > 0$). Following \cite{Chen}, an infinite path is called {\it rational} if it is tail-equivalent to $c^\infty$ for some closed path $c$, and called {\it irrational} otherwise.

It is easy to see that $\sim$ is an equivalence relation on $\partial E$ that respects the partition between finite and infinite paths. For $x \in \partial E$, we denote by $[x]$ the equivalence class of all paths in $\partial E$ which are tail-equivalent (with some lags) to $x$. 

\begin{remark} \label{sing}
It follows from the definition that two finite paths $x, y\in \partial E$ are tail-equivalent if and only if $r(x) = r(y)=v$, which is either a sink or an infinite emitter. In this case, we have $[x] = [y] = [v]$ consists of all finite paths ending at $v$.
\end{remark}

The groupoid of the graph $E$ is 
\begin{align*}
\CG_E &= \{(x, k, y) \in \partial E \times \Z \times \partial E \mid  x \sim_k y\} \\
&= \{(\mu p, |\mu| - |\nu|, \nu p) \mid \mu, \nu \in F(E), p \in \partial E, r(\mu) = r(\nu) = s(p)\},
\end{align*}
with the multiplication, inversion, domain and codomain maps given by
\begin{align*}
& (x, k, y)(y, l, z)=(x, k+l, z), \quad (x, k, y)^{-1}=(y, -k, x), \\
& d(x, k, y) = (y, 0, y), \quad c(x, k, y) = (x, 0, x).
\end{align*}
The unit space of $\CG_E$ is $\CG_E^{(0)}=\{(x, 0, x) \mid x\in \partial E\}$, which will be identified with $\partial E$ via the map $(x, 0, x) \mapsto x$. It is clear that for $x \in \partial E$, the orbit $\CO_x$ of $x$ is the same as the equivalence class $[x]$.

For $\mu,\nu \in F(E)$ with $r(\mu)=r(\nu)$ and a finite set $F \subseteq s^{-1}(r(\mu))$, define
$$
Z(\mu,\nu) = \{(\mu p, |\mu| - |\nu|, \nu p) \mid p\in \partial E, r(\mu) = s(p)\}
$$
and
$$
Z((\mu,\nu) \backslash F) = Z(\mu, \nu) \backslash \bigcup_{e \in F} Z(\mu e, \nu e).
$$
The sets $Z((\mu,\nu) \backslash F)$ form a basis of compact open bisections for a topology under which $\CG_E$ is a Hausdorff ample groupoid (see, e.g., \cite[Theorem 2.4]{Rigby}). Moreover, the continuous groupoid homomorphism $\kappa: \CG_E \to \Z, (x, k, y) \mapsto k$ provides $\CG_E$ with the structure of a $\Z$-graded groupoid, that is,
\begin{equation} \label{grading}
\CG_E = \bigsqcup_{k \in \Z} \CG_{E, k} \text{ with } \CG_{E, k} = \{(x , k, y) \mid x, y \in \partial E, x \sim_k y\}.
\end{equation}
By Lemma~\ref{graded}, the graded Steinberg algebra of the $\Z$-graded ample groupoid $\CG_E$ is
$$
A_K(\CG_E) = \bigoplus_{k \in \Z} A_K(\CG_E)_k, \text{ where } A_K(\CG_E)_k = \text{span}_K\{\1_{Z((\mu, \nu) \backslash F)} \mid |\mu| - |\nu| = k\}.
$$
By \cite[Example 3.2]{CS}, the Leavitt path algebra $L_K(E)$ is naturally graded isomorphic to $A_K(\CG_E)$ via the map 
$\pi: L_K(E)\rightarrow A_K(\CG_E)$ given by 
\begin{equation} \label{iso}
\begin{split}
& \pi(v) = \1_{Z(v,v)} \text{ for all } v \in E^0, \\
& \pi(e) = \1_{Z(e,r(e))}, \quad \pi(e^*) = \1_{Z(r(e),e)} \text{ for all } e \in E^1.
\end{split}
\end{equation}
In particular, we have $\pi(\mu\nu^*) = \1_{Z(\mu, \nu)}$, where $\mu, \nu \in F(E)$ with $r(\mu) = r(\nu)$. As a result, we may consider (graded) modules over $A_K(\CG_E)$ as (graded) modules over $L_K(E)$, or vice versa.

Now let $x \in \partial E$. Recall that
$$
L_{x} = d^{-1}(x) = \{(y, k, x) \mid y \in \partial E, y \sim_k x \text{ for some } k \in \Z\}.
$$
In view of (\ref{iso}) and Proposition~\ref{bimodule}, we obtain immediately

\begin{lemma} \label{action}
The $L_K(E)$-module structure on $KL_x$ is given by
$$
(\mu\nu^*).(y, k, x) = \1_{Z(\mu,\nu)}(y, k, x) = 
\begin{cases} (\mu p, |\mu| - |\nu| + k, x), & \text{ if }y = \nu p \; (p \in \partial E), \\
0, & \text{ else. } 
\end{cases}
$$
In particular, if $(y, k, x) = (\mu p, k, \nu p) \in L_x$, then $(y, k, x) = \mu\nu^*(x, 0, x)$.
\end{lemma}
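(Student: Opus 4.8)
The plan is to unwind the definitions: transport the $L_K(E)$-action on $KL_x$ through the graded isomorphism $\pi$ of (\ref{iso}), and then read off the answer from Proposition~\ref{bimodule}. Since $\pi(\mu\nu^*) = \1_{Z(\mu,\nu)}$ whenever $\mu,\nu \in F(E)$ with $r(\mu)=r(\nu)$, and $Z(\mu,\nu) \in \Bco(\CG_E)$, computing $(\mu\nu^*).(y,k,x)$ inside the $A_K(\CG_E)$-module $KL_x$ amounts to computing $\1_{Z(\mu,\nu)}$ applied to the basis element $t = (y,k,x) \in L_x$.

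Applying Proposition~\ref{bimodule} with $U = Z(\mu,\nu)$ and this $t$, whose codomain is $c(t)=(y,0,y)$ (identified with $y \in \partial E$), the result is $g.t$ if there is a unique $g \in Z(\mu,\nu)$ with $d(g) = y$, and $0$ otherwise. An element of $Z(\mu,\nu) = \{(\mu q,\,|\mu|-|\nu|,\,\nu q) \mid q \in \partial E,\ r(\mu)=s(q)\}$ has domain (identified with) $\nu q$, so such a $g$ exists if and only if $\nu$ is an initial subpath of $y$, say $y = \nu p$ with $p \in \partial E$; the tail $p$ is then unique by left cancellation of $\nu$, and $r(\mu)=r(\nu)=s(p)$ holds automatically, so $g = (\mu p,\,|\mu|-|\nu|,\,\nu p)$. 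It remains to compute the groupoid product, which by the composition rule $(a,m,b)(b,n,c)=(a,m+n,c)$ gives $g.t = (\mu p,\,|\mu|-|\nu|,\,\nu p)(\nu p,\,k,\,x) = (\mu p,\,|\mu|-|\nu|+k,\,x)$. This is precisely the claimed case distinction.

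For the final assertion I would specialise to $t = (x,0,x)$ and write $(y,k,x)$ in the standard form $(\mu p,\,|\mu|-|\nu|,\,\nu p)$ of an element of $\CG_E$ (so $y=\mu p$, $x=\nu p$, $k=|\mu|-|\nu|$); then $c(t) = x = \nu p$, the element $g$ above is $(\mu p,\,|\mu|-|\nu|,\,\nu p)$, and $g.(x,0,x) = (\mu p,\,|\mu|-|\nu|+0,\,x) = (y,k,x)$, i.e. $\mu\nu^*(x,0,x) = (y,k,x)$. There is no real obstacle here; the only points needing a word of care are the identification $(z,0,z) \leftrightarrow z$ of units used to read off $c(t)$ and $d(g)$, and the uniqueness of the tail $p$ in $y=\nu p$, both of which are immediate from the description of $\CG_E$ and of $\partial E$ given above.
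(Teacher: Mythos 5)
Your argument is correct and is exactly the route the paper takes: the paper derives the lemma ``immediately'' from the isomorphism $\pi$ in (\ref{iso}) together with Proposition~\ref{bimodule}, which is precisely your transport-the-action-and-compose computation, including the identification of units and the uniqueness of the tail coming from $Z(\mu,\nu)$ being a bisection. Nothing further is needed.
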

Moreover, the $\Z$-grading on $\CG_E$ (\ref{grading}) induces a $\Z$-grading on $KL_x$, namely
$$
KL_x = \bigoplus_{k \in \Z} KL_{x, k} \text{ with } L_{x, k} = L_x \cap \CG_{E, k} = \{(y, k, x) \in \CG_E\}. 
$$

We also need the following well-known result (see, e.g., \cite[Proposition 4.2]{S4} or \cite[Proposition 2.17]{Rigby}).

\begin{lemma} \label{trivial}
Let $x \in \partial E$. Then the isotropy group $(\CG_E)_x$ is trivial unless $x$ is a rational path, in which case $(\CG_E)_x = \{(x, k|c|, x) \mid k \in \Z\} \cong \Z$, where $c$ is a simple closed path such that $x$ is tail-equivalent to $c^\infty$.
\end{lemma}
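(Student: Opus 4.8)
The plan is to compute $(\CG_E)_x = \{(x,k,x) \mid x \sim_k x\}$ directly from Definition~\ref{tail}, splitting according to whether $x$ is finite or infinite, and, in the infinite case, whether it is eventually periodic. First I would dispose of the finite case: if $x \in \partial E$ has finite length, then $r(x) = v$ is singular, and by Remark~\ref{sing} any element $(x,k,x) \in (\CG_E)_x$ arises from $x = \mu p$, $x = \nu p$ with $r(\mu) = r(\nu) = s(p)$ and $|\mu| - |\nu| = k$. Since $x$ is a finite path, $p$ is a finite path too, and $\mu p = \nu p$ as literal edge sequences forces $\mu = \nu$ (compare the two factorizations of the same finite word at the position where $p$ begins), hence $k = 0$. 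So $(\CG_E)_x = \{(x,0,x)\}$ is trivial. The same argument shows triviality whenever $x$ is infinite but \emph{not} rational: if $x = \mu p = \nu p$ with $p \in E^\infty$ and, say, $|\mu| > |\nu|$, then writing $\mu = \nu\lambda$ with $|\lambda| = k > 0$ and $s(\lambda) = r(\nu)$, the equality $\nu\lambda p = \nu p$ gives $\lambda p = p$, so $p = \lambda\lambda\lambda\cdots = \lambda^\infty$; thus $x = \nu\lambda^\infty$ is tail-equivalent to $\lambda^\infty$, i.e.\ $x$ is rational, contradicting irrationality. Hence only rational paths can have nontrivial isotropy.

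Next I would handle the rational case. Suppose $x$ is rational, so $x \sim_\ell c^\infty$ for some closed path $c$; by replacing $c$ with the primitive closed path it is a power of, we may assume $c$ is simple (a closed path with $c \neq b^n$ for any closed path $b$ and $n \geq 2$). Write $x = \mu c^\infty$ for some $\mu \in F(E)$ with $r(\mu) = s(c)$. For any $k$ of the form $k = m|c|$ with $m \in \Z$, one has $x = \mu c^{m} \cdot c^\infty$ and $x = \mu \cdot c^\infty$ when $m \geq 0$ (and the symmetric factorization when $m < 0$), so $(x, m|c|, x) \in (\CG_E)_x$; this gives the containment $\{(x,k|c|,x) \mid k \in \Z\} \subseteq (\CG_E)_x$, and this set is clearly a subgroup isomorphic to $\Z$ via $m|c| \mapsto m$ (using the multiplication $(x,k,x)(x,l,x) = (x,k+l,x)$).

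For the reverse containment — that \emph{every} element of $(\CG_E)_x$ has degree a multiple of $|c|$ — suppose $(x, k, x) \in (\CG_E)_x$ with $k > 0$ (the case $k < 0$ follows by taking inverses, and $k = 0$ is trivial). Then $x = \mu p$, $x = \nu p$ with $|\mu| - |\nu| = k$, so $\mu = \nu\lambda$ with $|\lambda| = k$, and as above $\lambda p = p$, whence $p = \lambda^\infty$ and $x = \nu\lambda^\infty$. Now $x = \mu c^\infty = \nu\lambda^\infty$ exhibits the same infinite path written with two eventually-periodic tails $c^\infty$ and $\lambda^\infty$; the tails must eventually coincide, so $c^\infty$ and $\lambda^\infty$ have a common infinite suffix, which forces $c$ and $\lambda$ to be cyclic rotations of common powers of a single primitive closed path. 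Since $c$ is simple (primitive), this common primitive path is a rotation of $c$, and therefore $|c| \mid |\lambda| = k$. The main obstacle here is this last word-combinatorics step: making precise that two closed paths $c, \lambda$ with $c^\infty$ and $\lambda^\infty$ sharing a tail must satisfy $|c| \mid |\lambda|$ when $c$ is primitive. This is the path-groupoid analogue of the Fine–Wilf / primitivity lemma for words, and I would prove it by observing that a common suffix of $c^\infty$ and $\lambda^\infty$ of length $\geq |c| + |\lambda|$ is periodic with periods $|c|$ and $|\lambda|$, hence (Fine–Wilf) with period $\gcd(|c|,|\lambda|)$; primitivity of $c$ then forces $\gcd(|c|,|\lambda|) = |c|$. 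With that in hand, $(\CG_E)_x \subseteq \{(x,k|c|,x) \mid k \in \Z\}$, completing the identification $(\CG_E)_x \cong \Z$.
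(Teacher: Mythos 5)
Your proof is correct. Note, however, that the paper does not prove this lemma at all: it is quoted as a known result with references to Steinberg's paper on chain conditions and Rigby's survey, so there is no in-paper argument to compare against; what you have written is a self-contained substitute, and it follows the standard route those sources take. The case analysis is right: for finite $x$ the length count $|\mu|+|p|=|\nu|+|p|$ forces $k=0$; for infinite $x$ the cancellation $\mu=\nu\lambda$, $\lambda p=p$, $p=\lambda^\infty$ shows that nontrivial isotropy forces rationality and that every positive degree $k$ occurring in $(\CG_E)_x$ is the length of a closed path $\lambda$ with $x=\nu\lambda^\infty$; the inclusion $\{(x,k|c|,x)\}\subseteq(\CG_E)_x$ is immediate. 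You correctly isolate the only delicate point, namely that $|c|$ divides $|\lambda|$, and your Fine--Wilf argument settles it: the common infinite tail of $c^\infty$ and $\lambda^\infty$ is a word with periods $|c|$ and $|\lambda|$, hence period $g=\gcd(|c|,|\lambda|)$, so the relevant rotation $c_i$ equals $w^{|c|/g}$ for its length-$g$ prefix $w$. Two small points should be made explicit to close this out: (i) $w$ is itself a closed path (the concatenation $c_i=w\cdot w\cdots$ forces $r(w)=s(w)$), so the relation $c_i=w^{|c|/g}$ with $|c|/g\ge 2$ really does contradict simplicity; and (ii) rotations of a simple closed path are simple, so simplicity of $c$ transfers to $c_i$. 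Both are routine, and with them your argument is complete.
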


\subsection{Simple and graded simple $L_K(E)$-modules as induced modules}

Let us review the construction of Chen simple modules over the Leavitt path algebra $L_K(E)$ in \cite{Chen}, \cite{AR} and \cite{R1} under the groupoid approach. 

Let $x\in \partial E$. Let $V_{[x]}$ be the $K$-vector space having the equivalence class (or orbit) $[x]$ as a basis. (In view of Remark~\ref{sing}, we could assume that $x$ is a singular vertex or an infinite path in $E$, but we shall not make this assumption now as we are interested in graded modules.) Then $V_{[x]}$ is a left $L_K(E)$-module by defining, 
for all $p\in [x]$ and $v \in E^0, e\in E^1$,
\begin{align*}
v.p &= \delta_{v,s(p)}p \, , \\
e.p &= \delta_{r(e),s(p)}ep \, , \\
e^*.p &= \begin{cases}
p', \quad\text{if } p = ep'  \\
0, \quad \text{otherwise.}\end{cases}
\end{align*}
In addition, if $p$ is a sink or an infinite emitter, we define $e^*.p = 0$.

In \cite{Chen} and in \cite{R1}, the module $V_{[x]}$ is denoted by ${\bf N}_x$ if $x$ is a sink, and by ${\bf S}_{x\infty}$ if $x$ is an infinite emitter.

\begin{remark} \label{chen_act}
It follows from the definition of $V_{[x]}$ that for all $\mu,\nu\in F(E)$ with $r(\mu) = r(\nu)$ and  $p\in [x]$, we have
$$
(\mu\nu^*).p = \begin{cases}\mu p', &\text{ if }p=\nu p',\\
0, & \text{ else. }
\end{cases}
$$
\end{remark}

In \cite{Chen}, the twisted Chen modules are also defined. Let ${\bf a}=(a_e)_{e\in E^1}\in (K^*)^{E^1}$. We consider the automorphism $\sigma_{\bf a}$ of $L_K(E)$ given by $\sigma_{\bf a}(v) = v$, $\sigma_{\bf a}(e) = a_e e$,  $\sigma_{\bf a}(e^*) = a_e^{-1}e^*$ for all $v\in E^0, e \in E^1$. The $\bf a$-twisted Chen module $V_{[x]}^{\bf a}$ has the same underlying vector space as $V_{[x]}$ and the action of $L_K(E)$ is given by: 
$L_K(E) \times V_{[x]}^{\bf a} \rightarrow V_{[x]}^{\bf a}, \; (\eta, p) \mapsto \sigma_{\bf a}(\eta).p$, for $\eta \in L_K(E)$ and $p \in [x]$. In particular, we have $V_{[x]}^{\bf 1} = V_{[x]}$, where ${\bf 1}$ is the identity element of the group $(K^*)^{E^1}$.

The construction of twisted modules is modified in \cite{AR} as follows. Let $f(t) = 1 + a_1 t + \cdots + a_m t^m$, $m \geq 1$, be an irreducible polynomial in $K[t, t^{-1}]$ and let $c = e_1 e_2 \cdots e_n$ be a cycle. Let $K' = K[t, t^{-1}]/(f(t))$, which is a field. Consider the $L_{K'}(E)$-module $V_{[c^{\infty}]}^{\bf t}$, where ${\bf t} \in (K'^*)^{E^1}$ has $\overline t$ in the $e_1$-coordinate and 1 in other coordinates (note that $\overline t$ is invertible in $K'$). The $L_K(E)$-module $V_{[c^{\infty}]}^f$ is obtained by restricting scalars on $V_{[c^{\infty}]}^{\bf t}$ from $L_{K'}(E)$ to $L_K(E)$.

It was shown that the modules $V_{[x]}$, $x \in \partial E$, and $V_{[c^{\infty}]}^{f}$ are simple $L_K(E)$-modules, called the Chen simple modules. Using the concept of graded branching systems, Hazrat and Rangaswamy \cite{HR} proved that $V_{[x]}$ is graded when $x$ is a singular vertex or an irrational path, and constructed a graded simple module $N_{vc}$ which is not simple.  

We will show that all the $L_K(E)$-modules mentioned above arise naturally as induced modules of simple or graded simple modules over isotropy group algebras. 

\begin{definition}\cite{Chen}
Let ${\bf a} = (a_e)_{e \in E^1} \in (K^*)^{E^1}$. For each finite path $\mu = e_1 e_2\cdots e_n$ of positive length, denote by $a_{\mu}$ the product $a_{e_1}a_{e_2}\cdots a_{e_n} \in K^*$. The element ${\bf a}$ is called $\mu$-stable if $a_{\mu}=1$. By abuse of notation, we also define $a_v = 1$ for all $v \in E^0$.	
\end{definition}

\subsubsection{The case of irrational paths and finite paths ending at singular vertices}

Let $x \in \partial E$ and suppose that $x$ is not a rational path.

\begin{lemma}\label{lem_well}
Let ${\bf a} = (a_e) \in (K^*)^{E^1}$ and $y \in [x]$. If $\mu_1,\mu_2,\nu_1,\nu_2\in F(E)$ and $ p_1, p_2 \in \partial E$ are such that $y=\mu_1p_1=\mu_2p_2, x=\nu_1p_1=\nu_2p_2$, then $a_{\mu_1}a_{\nu_1}^{-1} = a_{\mu_2}a_{\nu_2}^{-1}$ and $|\mu_1|-|\nu_1| = |\mu_2|-|\nu_2|$.
\end{lemma}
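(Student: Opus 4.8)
The plan is to reduce everything to a statement about two paths in $\partial E$ that are simultaneously decomposed in two ways, using crucially the hypothesis that $x$ is \emph{not} rational (so that the orbit groupoid $\CG_E$ at $x$ has trivial isotropy, by Lemma~\ref{trivial}). First I would observe that the two elements $(\mu_1 p_1, |\mu_1|-|\nu_1|, \nu_1 p_1)$ and $(\mu_2 p_2, |\mu_2|-|\nu_2|, \nu_2 p_2)$ both lie in $L_x = d^{-1}(x)$ and both have codomain $(y,0,y)$; the second pair is $(\mu_2 p_2, |\mu_2|-|\nu_2|, \nu_2 p_2)$ with $\mu_2 p_2 = \mu_1 p_1 = y$ and $\nu_2 p_2 = \nu_1 p_1 = x$. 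Hence their ``ratio'' — that is, the composite $(\mu_1 p_1,\,|\mu_1|-|\nu_1|,\,\nu_1 p_1)^{-1}(\mu_2 p_2,\,|\mu_2|-|\nu_2|,\,\nu_2 p_2)$, which is legitimate since $d$ of the first equals $c$ of the second, both being $(x,0,x)$ (wait: one must instead compose so the matching object is $y$, i.e. take $(\nu_1 p_1, |\nu_1|-|\mu_1|, \mu_1 p_1)(\mu_2 p_2, |\mu_2|-|\nu_2|, \nu_2 p_2)$) — is an element of the isotropy group $(\CG_E)_x$ at $x$. By Lemma~\ref{trivial} this group is trivial, so the composite equals $(x,0,x)$, which forces both the $\Z$-coordinate to be $0$ and gives $|\mu_1|-|\nu_1| = |\mu_2|-|\nu_2|$. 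This settles the second equality claimed in the lemma.

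For the first equality, the combinatorial content is: if $\mu_1 p_1 = \mu_2 p_2$ and $\nu_1 p_1 = \nu_2 p_2$ as paths, then $a_{\mu_1} a_{\nu_1}^{-1} = a_{\mu_2} a_{\nu_2}^{-1}$. I would argue by comparing initial subpaths. From $\mu_1 p_1 = \mu_2 p_2$, one of $\mu_1,\mu_2$ is an initial subpath of the other; say $\mu_2 = \mu_1 \lambda$ for some $\lambda \in F(E)$ (the other case is symmetric). Then $p_1 = \lambda p_2$. Substituting into $\nu_1 p_1 = \nu_2 p_2$ gives $\nu_1 \lambda p_2 = \nu_2 p_2$; since $p_2$ is an infinite path or a finite path ending at a singular vertex (in which case one uses that finite paths in $\partial E$ ending at $v$ have no exits from the decomposition, or more simply that $\partial E$-paths are determined by their edge sequences), cancellation of the common tail $p_2$ yields $\nu_2 = \nu_1 \lambda$. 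Therefore $a_{\mu_2} = a_{\mu_1} a_{\lambda}$ and $a_{\nu_2} = a_{\nu_1} a_{\lambda}$ (using multiplicativity of $\mu \mapsto a_\mu$ on concatenation, together with $a_v = 1$ for vertices), whence $a_{\mu_2} a_{\nu_2}^{-1} = a_{\mu_1} a_{\lambda} a_{\lambda}^{-1} a_{\nu_1}^{-1} = a_{\mu_1} a_{\nu_1}^{-1}$, as desired. Incidentally this same cancellation re-proves $|\mu_1| - |\nu_1| = |\mu_1| + |\lambda| - |\nu_1| - |\lambda| = |\mu_2| - |\nu_2|$ directly, so one may skip the groupoid argument if preferred; but the non-rationality hypothesis is what guarantees the cancellation step is unambiguous when $p_1, p_2$ contain repeated subpaths.

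The main obstacle is the cancellation ``$\nu_1 \lambda p_2 = \nu_2 p_2 \implies \nu_1 \lambda = \nu_2$'': if $p_2 = c^\infty$ for a closed path $c$ this can fail, since $c^\infty = c^{|c|} \cdot c^\infty$ shifts lengths. This is exactly where the hypothesis that $x$ (hence every element of $[x]$, including $p_2$) is \emph{irrational} or a \emph{finite path ending at a singular vertex} is used: in the finite case $p_2$ is a genuine finite word and cancellation is immediate; in the irrational infinite case $p_2 = c \cdot p_2'$ is impossible for any closed path $c$, so the only way to have $w p_2 = w' p_2$ is $w = w'$. I would state this cancellation as a small preliminary observation (or cite the relevant fact about $\partial E$), and then the rest of the proof is the bookkeeping sketched above. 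I expect the write-up to be short: one paragraph for the initial-subpath dichotomy and cancellation, one short paragraph for the multiplicativity computation with the $a$'s and the length count.
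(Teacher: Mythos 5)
Your proof is correct and is essentially the paper's own argument: both reduce to one of $\mu_1,\mu_2$ being a prefix of the other (you write $\mu_2=\mu_1\lambda$, the paper $\mu_1=\mu_2 q$), substitute to get $\nu_1\lambda p_2=\nu_2 p_2$, and rule out a nontrivial leftover prefix $q'$ because $q'p_2=p_2$ with $|q'|>0$ would force $p_2$, hence $x$, to be rational; your isotropy-group shortcut for the lag equality via Lemma~\ref{trivial} is valid but, as you yourself note, redundant. One wording slip worth fixing: an irrational path certainly can begin with a closed path, so the justification should not read ``$p_2=c\cdot p_2'$ is impossible for any closed path $c$'' but rather that $p_2=c\,p_2$ (removing an initial closed path returns the same path) is impossible unless $p_2=c^\infty$, which is exactly the cancellation fact your argument and the paper's case (2) both use.
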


\begin{proof}
Since $\mu_1p_1=\mu_2p_2$, we may assume that $\mu_1=\mu_2q$ for some $q\in F(E)$. Then $\mu_2qp_1=\mu_2p_2$ implies $p_2 = qp_1$. Substituting to $x$ yields $\nu_1p_1=\nu_2qp_1$. Thus there are two possibilities: \\
(1) $\nu_1=\nu_2q$. Then $a_{\mu_1}a_{\nu_1}^{-1}=a_{\mu_2q}a_{\nu_2q}^{-1}
 =a_{\mu_2}a_{q}a_{\nu_2}^{-1}a_q^{-1}=a_{\mu_2}a_{\nu_2}^{-1}.$ 
Moreover, $|\mu_1|-|\mu_2| = |\nu_1|-|\nu_2| = |q|$. \\
(2) $\nu_1\neq \nu_2q$. If $|\nu_1| > |\nu_2 q|$, then $\nu_1 = \nu_2 q q'$ for some finite path $q'$ of positive length. Then $\nu_1p_1 = \nu_2 q q'p_1 = \nu_2 q p_1$ yields $p_1 = q' p_1$. This implies that $p_1$ is a rational path. We get the same conclusion if $|\nu_1| < |\nu_2 q|$. But $p_1 \sim y \sim x$ and $x$ is not rational, so this case is impossible.
\end{proof}

\begin{lemma} \label{Z_graded}
For any ${\bf a}\in (K^*)^{E^1}$, the $L_K(E)$-module $V_{[x]}^{\bf a}$ is $\Z$-graded.
\end{lemma}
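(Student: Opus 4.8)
The plan is to define a grading on the underlying vector space $V_{[x]}^{\bf a}$ directly using the structure of the orbit $[x]$, and then check the compatibility condition $L_K(E)_k \cdot (V_{[x]}^{\bf a})_j \subseteq (V_{[x]}^{\bf a})_{k+j}$. Fix once and for all a base point; since $x$ is not rational, Lemma~\ref{trivial} tells us the isotropy group $(\CG_E)_x$ is trivial, and Lemma~\ref{lem_well} tells us that for any $y \in [x]$ the integer $|\mu| - |\nu|$ coming from a representation $y = \mu p$, $x = \nu p$ does not depend on the choice of $\mu,\nu,p$. So I would define, for each $k \in \Z$, the homogeneous component $(V_{[x]}^{\bf a})_k$ to be the span of those $p \in [x]$ for which this well-defined lag between $p$ and $x$ equals $k$; equivalently $(V_{[x]}^{\bf a})_k = \mathrm{span}_K\{\, y \in [x] \mid (y, k, x) \in \CG_E \,\}$. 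Since every $y \in [x]$ has exactly one such $k$ (again by Lemma~\ref{lem_well}, uniqueness of the lag for non-rational paths), these subspaces give a genuine direct sum decomposition $V_{[x]}^{\bf a} = \bigoplus_{k \in \Z} (V_{[x]}^{\bf a})_k$.

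Next I would verify compatibility. It suffices to test on the spanning set $\{\mu\nu^* \mid r(\mu) = r(\nu)\}$ of homogeneous elements of $L_K(E)$, where $\deg(\mu\nu^*) = |\mu| - |\nu|$, and on basis elements $p \in [x]$. By Remark~\ref{chen_act}, and keeping in mind that the twist $\sigma_{\bf a}$ only rescales $\mu\nu^*$ by the nonzero scalar $a_\mu a_\nu^{-1}$, we have $(\mu\nu^*) \cdot p = a_\mu a_\nu^{-1}\,\mu p'$ if $p = \nu p'$ and $0$ otherwise. In the nonzero case, suppose $p$ lies in degree $j$, so there exist $\alpha,\beta,q$ with $p = \alpha q$, $x = \beta q$, $|\alpha| - |\beta| = j$. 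I need the lag of $\mu p' = \mu(\nu^{-1}\text{-truncation of }p)$ relative to $x$ to be $j + (|\mu| - |\nu|)$; this is a short bookkeeping computation combining the factorization $p = \nu p'$ with $p = \alpha q$ and $x = \beta q$, and it is exactly the kind of path-concatenation manipulation already carried out in the proof of Lemma~\ref{lem_well}, so well-definedness there guarantees there is no ambiguity. Hence $(\mu\nu^*) \cdot (V_{[x]}^{\bf a})_j \subseteq (V_{[x]}^{\bf a})_{j + |\mu| - |\nu|}$, as required.

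I would then remark that the answer is clean and independent of ${\bf a}$: the scalar $a_\mu a_\nu^{-1}$ is a unit and does not affect which homogeneous component the image lands in, so $V_{[x]}^{\bf a}$ and $V_{[x]}$ carry the same grading on the same vector space, differing only in the action by nonzero scalars. In particular the lemma also recovers the ${\bf a} = {\bf 1}$ statement of \cite{HR} for irrational paths and, via Remark~\ref{sing}, for finite paths ending at singular vertices.

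The step I expect to be the main (though modest) obstacle is not the compatibility check itself but making sure the degree map on the basis $[x]$ is genuinely well-defined and single-valued, i.e. that each $p \in [x]$ is assigned one and only one integer $k$. This is precisely where the hypothesis that $x$ is \emph{not} rational is essential: for a rational path the lag is non-unique (as the excerpt notes for $c^\infty$), the isotropy group is $\Z$ rather than trivial, and the naive degree assignment collapses. Everything else is routine once Lemma~\ref{lem_well} is invoked to pin down the lag uniquely.
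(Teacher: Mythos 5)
Your proposal is correct and follows essentially the same route as the paper: decompose $[x]$ by the lag $k$ (using Lemma~\ref{lem_well} to see the lag is unique, hence the union is disjoint and the sum direct), then verify $(\mu\nu^*)\cdot y$ lands in the component shifted by $|\mu|-|\nu|$ via Remark~\ref{chen_act}. The only cosmetic difference is that the paper first reduces the twisted case to $V_{[x]}$ by noting the twist is mere scaling, whereas you carry the scalar $a_\mu a_\nu^{-1}$ along and observe it does not affect the component — the same observation in a different order.
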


\begin{proof}
It suffices to show that $V_{[x]}$ is $\Z$-graded, as the twisted action is just scaling of the ordinary action. We have $[x] = \bigcup_{k \in \Z} \, [x]_k = \bigcup_{k \in \Z} \, \{y \in [x] \mid y \sim_k x\}$. Lemma~\ref{lem_well} shows that this is a disjoint union. Hence we have a direct sum decomposition $V_{[x]} = \bigoplus_{k \in \Z}V_{[x],k}$, where $V_{[x], k}$ is the vector subspace with basis $[x]_k$. Using Remark~\ref{chen_act}, for $\mu, \nu \in F(E)$ with $r(\mu) = r(\nu)$ and $y \in [x]_k$, it is easily checked that 
$$
(\mu\nu^*).y = \begin{cases}\mu p \in [x]_{|\mu| - |\nu| + k}, & \text{ if } y=\nu p,\\
0, & \text{ else. }
\end{cases}
$$
Hence $V_{[x]}$ is a $\Z$-graded $L_K(E)$-module.
\end{proof}

\begin{remark}
The above $\Z$-grading on $V_{[x]}$ coincides with the one defined in \cite{HR} using the notion of graded algebraic branching systems.
\end{remark}

Since $x$ is not rational, the isotropy group $(\CG_E)_x$ is trivial by Lemma \ref{trivial}. Hence $K(\CG_E)_x \cong_{gr} K$ as $\Z$-graded algebras with the trivial grading. The unique simple module over $K(\CG_E)_x$ is $K$, on which $K$ acts by left multiplication. On the other hand, the graded simple modules over $K(\CG_E)_x$ are the shifted modules $K(n)$ for all $n \in \Z$, where $K$ is a $\Z$-graded $K$-module with trivial grading.
   
\begin{proposition}\label{triv}
For all ${\bf a}\in (K^*)^{E^1}$, we have $\Ind_x(K) \cong_{gr} V^{\bf a}_{[x]}$ as graded $L_K(E)$-modules.
\end{proposition}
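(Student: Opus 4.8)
The plan is to first collapse $\Ind_x(K)$ to the cyclic module $KL_x$, and then write down an explicit degree-preserving $L_K(E)$-isomorphism of $KL_x$ onto $V_{[x]}^{\bf a}$. Since $x$ is not rational, Lemma~\ref{trivial} gives $(\CG_E)_x = \{x\}$, so $K(\CG_E)_x = K$, concentrated in degree $0$; hence
$\Ind_x(K) = KL_x \otimes_{K(\CG_E)_x} K = KL_x \otimes_{K} K \cong_{gr} KL_x$,
the $L_K(E)$-action and $\Z$-grading on $KL_x$ being those recorded after Lemma~\ref{action} (via the graded isomorphism $\pi$ of (\ref{iso})). So everything reduces to constructing a graded $L_K(E)$-isomorphism $\phi\colon KL_x \to V_{[x]}^{\bf a}$.

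I would define $\phi$ on the basis $L_x$ of $KL_x$ by $\phi\big((y,k,x)\big) = a_{\mu}a_{\nu}^{-1}\, y$ whenever $y=\mu p$, $x=\nu p$ with $\mu,\nu\in F(E)$ and $p\in\partial E$ (so that $k=|\mu|-|\nu|$). The first point to settle is that this is independent of the chosen factorisation $(\mu,\nu,p)$: this is precisely Lemma~\ref{lem_well}, which also tells us that the lag $k$ is determined by $y$, so that the basis $L_x$ is indexed by $y\in[x]$. Because each scalar $a_{\mu}a_{\nu}^{-1}$ lies in $K^{*}$ and $[x]$ is a basis of $V_{[x]}^{\bf a}$, $\phi$ is then automatically a $K$-linear bijection. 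Gradedness is likewise immediate: $(y,k,x)\in L_{x,k}$ exactly when $y\in[x]_k$, and $\phi\big((y,k,x)\big)$ is a nonzero multiple of $y$, so $\phi$ matches the $\Z$-grading of $V_{[x]}^{\bf a}$ from Lemma~\ref{Z_graded} component by component.

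It then remains to check that $\phi$ respects the $L_K(E)$-action. I would verify $\phi\big(\mu'\nu'^{*}\cdot(y,k,x)\big) = \mu'\nu'^{*}\cdot\phi\big((y,k,x)\big)$ on the spanning monomials $\mu'\nu'^{*}$ of $L_K(E)$ — equivalently, one may just run this on the generators $v,e,e^{*}$ — using Lemma~\ref{action} on the left and Remark~\ref{chen_act} together with $\sigma_{\bf a}(\mu'\nu'^{*}) = a_{\mu'}a_{\nu'}^{-1}\mu'\nu'^{*}$ on the right. This splits into the case $y\neq\nu'q$ for all $q\in\partial E$ (both sides vanish) and the case $y=\nu'q$, which subdivides further according to whether $\mu$ or $\nu'$ is the longer of the two paths $\mu,\nu'$; in each surviving case the asserted identity collapses, after cancellation, to the cocycle relation $a_{\alpha\beta}=a_{\alpha}a_{\beta}$ (with $a_v=1$). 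Keeping track of these twisting scalars through the case distinction is the only genuine work; the rest is formal. Finally, combining the resulting graded $L_K(E)$-isomorphism $\phi$ with $\Ind_x(K)\cong_{gr}KL_x$ yields the claim. If one prefers, the injectivity step can even be skipped: $\Ind_x(K)$ is graded simple by Proposition~\ref{ind_simple}, so any nonzero graded $L_K(E)$-map out of it is automatically injective, and surjectivity is visible from the basis description of $\phi$.
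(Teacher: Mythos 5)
Your proposal is correct and follows essentially the same route as the paper: the same reduction $\Ind_x(K)\cong_{gr}KL_x$, the same map $(y,k,x)\mapsto a_\mu a_\nu^{-1}y$, with Lemma~\ref{lem_well} guaranteeing well-definedness and (via uniqueness of the lag) bijectivity, and Lemma~\ref{action} together with Remark~\ref{chen_act} giving the module-map property. The only cosmetic difference is bookkeeping: the paper writes down an explicit inverse and checks $L_K(E)$-linearity only at the cyclic generator $(x,0,x)$, propagating via $(y,k,x)=\mu\nu^*(x,0,x)$, whereas you verify it directly on all monomials (or generators) with a short case analysis reducing to $a_{\alpha\beta}=a_\alpha a_\beta$ — both are fine.
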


\begin{proof} 
It is obvious that $\Ind_x(K) = KL_x\otimes_K K \cong_{gr} KL_x$ as graded $A_K(\CG_E)$-modules via the map $(y, k, x)\otimes \lambda \mapsto \lambda(y, k, x)$. As noted before, we may consider $KL_x$ as a graded $L_K(E)$-module (see Lemma~\ref{action}).
 
 Consider the $K$-linear map $\varphi: KL_x \longrightarrow V^{\bf a}_{[x]}$ given by
 $\varphi ((y, k, x))=a_{\mu}a_{\nu}^{-1}y$, where $\mu,\nu\in F(E)$ are such that $x=\nu p, y=\mu p, |\mu|-|\nu| = k$.
 
 We also define the $K$-linear map $\psi: V^{\bf a}_{[x]}\longrightarrow KL_x $ by $\psi(y)=a_{\mu}^{-1}a_{\nu}(y, k, x)$, where $\mu, \nu \in F(E)$ satisfying $y=\mu p, x=\nu p,|\mu|-|\nu| = k$. By Lemma \ref{lem_well}, $\varphi$ and $\psi$ are well-defined. Moreover, clearly $\varphi$ and $\psi$ preserve the gradings on $KL_x$ and $V_{[x]}^{\bf a}$.
  
We have 
$$
\varphi(\psi (y)) = \varphi(a_{\mu}^{-1}a_{\nu}(y, k, x))=a_{\mu}^{-1}a_{\nu}a_{\mu}a_{\nu}^{-1}y = y  
 $$
and
$$
\psi(\varphi((y, k, x))) = \psi (a_{\mu}a_{\nu}^{-1}y)=a_{\mu}a_{\nu}^{-1}a_{\mu}^{-1}a_{\nu}(y, k, x)=(y, k, x).
$$
Thus $\varphi$ is a bijective map. To show that $\varphi$ is a $L_K(E)$-homomorphism, we first claim that if $\mu, \nu \in F(E)$ with $r(\mu) = r(\nu)$, then $\varphi(\mu\nu^*(x,0,x)) = \mu\nu^*\varphi((x,0,x))$. Indeed, by Lemma~\ref{action}, we have 
\begin{equation*}
\varphi(\mu\nu^*(x,0,x))=\begin{cases}\varphi((\mu p,|\mu|-|\nu|,x))=a_{\mu}a_{\nu}^{-1}\mu p , & \text{ if } x=\nu p, \\ 0, & \text{ else }.\end{cases}
\end{equation*}
On the other hand, it follows from the definition of the twisted action and Remark~\ref{chen_act} that
$$
\mu\nu^*\varphi((x,0,x)) = \mu\nu^* x = \sigma_{\bf a}(\mu\nu^*).x=a
_{\mu}a_{\nu}^{-1}\mu\nu^*.x=\begin{cases}a
_{\mu}a_{\nu}^{-1}\mu p, & \text{ if }x=\nu p, 
\\ 0, & \text{ else.}\end{cases}
$$
Hence the claim is proved. Consequently, for all $(y, k, x) = (\mu p, k, \nu p) \in L_x$ and $\alpha,\beta\in F(E)$ with $r(\alpha) = r(\beta)$, we have
\begin{align*}
\varphi(\alpha\beta^*(y, k, x)) &= \varphi (\alpha\beta^*\mu\nu^*(x,0,x)) = \alpha\beta^*\mu\nu^*\varphi((x,0,x)) \\
&= \alpha\beta^*(\mu\nu^*\varphi((x,0,x))) = \alpha\beta^*\varphi(y, k, x),
\end{align*} 
since $\alpha\beta^*\mu\nu^*$ is a linear combination of elements of the form $\gamma\eta^*$ in $L_K(E)$. We conclude that $\varphi$ is a graded $L_K(E)$-isomorphism.
\end{proof}

As consequences of Proposition \ref{triv} and Theorem~\ref{main1}, we obtain results as in \cite{Chen, HR} and their graded versions.

\begin{corollary}\label{cor1}
For all $x, y \in \partial E$ which are not rational paths and for all ${\bf a}, {\bf b} \in (K^*)^{E^1}$, we have
\begin{enumerate}
\item [(1)] $V_{[x]}^{\bf a}$ is a simple, as well as graded simple, $L_K(E)$-module.
\item [(2)] $V_{[x]}^{\bf a} \cong_{gr} V_{[x]}^{\bf b} \cong_{gr} V_{[x]}$.
\item [(3)] $V_{[x]} \cong_{gr} V_{[y]}$ if and only if $x \sim_0 y$, i.e., $x = \nu p, y = \mu p$ for some $\mu, \nu \in F(E), p \in \partial E$ and $|\mu| = |\nu|$.
\item [(4)] $V_{[x]} \cong V_{[y]}$ as non-graded modules if and only if $[x] = [y]$. In particular, this happens only when $x = y$ if $x$ and $y$ are singular vertices.
\end{enumerate}
\end{corollary}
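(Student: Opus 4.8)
The plan is to reduce all four assertions to Proposition~\ref{triv}, which identifies each $V_{[x]}^{\bf a}$ with the induced module $\Ind_x(K)$, and then to feed the resulting pairs into the classification results of Section~\ref{3} and their non-graded counterparts from \cite{S1}. Since $x$ is not a rational path, Lemma~\ref{trivial} gives that the isotropy group $(\CG_E)_x$ is trivial, so $K(\CG_E)_x \cong_{gr} K$ with the trivial grading; over itself $K$ is both simple and, as a trivially graded module, graded simple. Hence, by Proposition~\ref{ind_simple}, $\Ind_x(K)$ is graded simple, and by the non-graded \cite[Proposition 7.19]{S1} it is simple. Transporting along the graded isomorphism $V_{[x]}^{\bf a} \cong_{gr} \Ind_x(K)$ of Proposition~\ref{triv} proves (1). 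Part (2) is then immediate: for all ${\bf a},{\bf b} \in (K^*)^{E^1}$ we get $V_{[x]}^{\bf a} \cong_{gr} \Ind_x(K) \cong_{gr} V_{[x]}^{\bf b}$, and in particular $\cong_{gr} V_{[x]}^{\bf 1} = V_{[x]}$.

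For (3), Proposition~\ref{triv} reduces the problem to deciding when $\Ind_x(K) \cong_{gr} \Ind_y(K)$. By Proposition~\ref{not_iso} such an isomorphism forces $x$ and $y$ to lie in the same orbit, i.e. $[x] = [y]$; and then Proposition~\ref{ind_iso} says the graded isomorphism holds exactly when $K \cong_{gr} K(\alpha)$ as graded $K(\CG_E)_x$-modules for some $\alpha \in \Gamma_{xy}$. As $K$ is concentrated in degree $0$ while $K(\alpha)$ is concentrated in degree $-\alpha$, this occurs if and only if $\alpha = 0$, that is, $0 \in \Gamma_{xy}$. Under the identification of $\CG_E$-morphisms of degree $k$ with lag-$k$ tail-equivalences, $0 \in \Gamma_{xy}$ amounts to $x \sim_0 y$; conversely $x \sim_0 y$ yields both $[x] = [y]$ and $0 \in \Gamma_{xy}$. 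When $x \sim y$, Lemma~\ref{lem_well} shows the lag is unique, so $\Gamma_{xy}$ is a singleton and the ``for some'' and ``for all'' versions of Proposition~\ref{ind_iso} agree.

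For (4), forgetting the gradings turns the question into whether $\Ind_x(K) \cong \Ind_y(K)$ as $A_K(\CG_E)$-modules. By Proposition~\ref{not_iso} this again requires $[x] = [y]$, and by the non-graded classification \cite[Theorem 7.26]{S1} the two induced modules are then isomorphic precisely when $K \cong K$ over $K(\CG_E)_x$, which holds automatically. Thus $V_{[x]} \cong V_{[y]}$ if and only if $[x] = [y]$. Finally, if $x$ and $y$ are singular vertices, then, viewed as finite paths of length $0$ in $\partial E$, Remark~\ref{sing} shows that $[x] = [y]$ forces $r(x) = r(y)$; since a vertex is its own range, $x = r(x) = r(y) = y$.

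All of these steps are routine once Proposition~\ref{triv} and Lemma~\ref{trivial} are available; the one point that needs care is the bookkeeping in (3), namely keeping track of which homogeneous component the one-dimensional module $K(\alpha)$ occupies, and using the non-rationality of $x$ --- via Lemmas~\ref{trivial} and \ref{lem_well} --- to ensure that the isotropy group is trivial and the relevant lag is unique. I do not expect any genuine obstacle beyond this.
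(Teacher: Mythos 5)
Your proposal is correct and follows essentially the same route as the paper: identify each $V_{[x]}^{\bf a}$ with $\Ind_x(K)$ via Proposition~\ref{triv}, use triviality of the isotropy group (Lemma~\ref{trivial}) to get (graded) simplicity of the induced module, and settle (3) and (4) by the classification of induced modules, noting that $K \cong_{gr} K(\alpha)$ forces $\alpha = 0$. The only cosmetic difference is that you invoke Propositions~\ref{not_iso} and \ref{ind_iso} (plus the lag-uniqueness from Lemma~\ref{lem_well}) directly, where the paper cites Theorem~\ref{main1}, which is built from exactly those ingredients.
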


\begin{proof}
It remains to prove (3), as the other statements are clear from Remark~\ref{sing} and Proposition~\ref{triv}. By Theorem~\ref{main1}, $V_{[x]} \cong_{gr} V_{[y]}$ if and only if $[x] = [y]$ and $K \cong_{gr} K(n)$ for some $n \in \Z_{xy}$. Since $K$ is a $\Z$-graded module with the trivial grading, $K \cong_{gr} K(n)$ if and if $n = 0$. Thus (3) follows.
\end{proof}

Theorem~\ref{main1} implies the following classification for graded simple modules induced from isotropy groups of non-rational paths. 

\begin{corollary} \label{classify}
Let $D$ be a set containing exactly one irrational path from each equivalence class of irrational paths, and containing all singular vertices. Then the $L_K(E)$-modules $V_{[x]}(n)$, where $x \in D$ and $n \in \Z$, form a full list (up to graded isomorphism) of pairwise non-isomorphic graded simple modules induced from isotropy groups of non-rational paths. 
\end{corollary}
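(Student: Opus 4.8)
The plan is to combine the previously established facts into the bookkeeping statement of Corollary~\ref{classify}. First I would observe that, by Lemma~\ref{trivial}, for a non-rational path $x$ the isotropy group $(\CG_E)_x$ is trivial, so $K(\CG_E)_x \cong_{gr} K$ with trivial grading; hence the graded simple $K(\CG_E)_x$-modules are exactly the shifts $K(n)$, $n \in \Z$, and every graded simple module induced from the isotropy group of a non-rational path $x$ is of the form $\Ind_x(K(n)) \cong_{gr} \Ind_x(K)(n)$. By Proposition~\ref{triv} (with $\mathbf{a} = \mathbf{1}$) this is graded isomorphic to $V_{[x]}(n)$. Thus every such graded simple module appears in the proposed list, i.e.\ the list is exhausting.

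Next I would prove that $D$ indeed hits every equivalence class of non-rational paths exactly once. A non-rational path $x \in \partial E$ is either an irrational infinite path or a finite path; in the latter case, by Remark~\ref{sing}, $[x] = [v]$ where $v = r(x)$ is a singular vertex, and $[v]$ contains the path of length zero $v$. Since $D$ contains all singular vertices and one representative irrational path per irrational-path equivalence class, and since distinct singular vertices $v \neq w$ satisfy $[v] \neq [w]$ (again Remark~\ref{sing}), the set $D$ is a complete, irredundant set of orbit representatives for the non-rational part of $\CG_E^{(0)}$.

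Finally I would address pairwise non-isomorphism. Take $x, y \in D$ and $m, n \in \Z$, and suppose $V_{[x]}(m) \cong_{gr} V_{[y]}(n)$. Using $V_{[x]}(m) \cong_{gr} \Ind_x(K)(m) = \Ind_x(K(m))$ and likewise for $y$, Theorem~\ref{main1} (the isomorphism criterion for induced graded simple modules) forces $x$ and $y$ to lie in the same orbit, hence $[x] = [y]$, hence $x = y$ by irredundancy of $D$. Then $\Z_{xy} = \Z_{xx}$ contains $0$ (take the unit morphism at $x$), and the criterion gives $K(m) \cong_{gr} K(n)(\alpha) = K(n + \alpha)$ for some $\alpha \in \Z_{xx}$; but reading off the grading, $K(r) \cong_{gr} K(s)$ forces $r = s$, and in particular taking $\alpha = 0$ we get $m = n$. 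Therefore the modules $V_{[x]}(n)$ with $x \in D$, $n \in \Z$ are pairwise non-graded-isomorphic, completing the proof. The only mildly delicate point is the second paragraph's verification that $D$ is a genuine orbit transversal for the non-rational units, but this is immediate from Remark~\ref{sing} together with the identification $\CO_x = [x]$; everything else is a direct citation of Proposition~\ref{triv} and Theorem~\ref{main1}.
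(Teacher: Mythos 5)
Your overall route is the same as the paper's (reduce to $\Ind_x(K(n))$ via Proposition~\ref{triv} and the commutation of $\Ind_x$ with shifts, check that $D$ is an orbit transversal, then apply the isomorphism criterion), but your treatment of the key uniqueness step contains a genuine logical gap. After reducing to $x=y$, Theorem~\ref{main1} gives only that $K(m)\cong_{gr}K(n)(\alpha)$ for \emph{some} $\alpha\in\Gamma_{xx}$, i.e.\ $m=n+\alpha$ for some $\alpha\in\Gamma_{xx}$. From an existential statement you cannot ``take $\alpha=0$''; the fact that $0\in\Gamma_{xx}$ is irrelevant. What is actually needed is that $\Gamma_{xx}=\{0\}$ when $x$ is not a rational path, which is exactly what Lemma~\ref{trivial} (or Lemma~\ref{lem_well}) provides: $x\sim_k x$ forces $k=0$. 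This is not a cosmetic point — it is the precise place where non-rationality enters. For a rational path $c^\infty$ one has $\Gamma_{xx}=|c|\Z$, and shifts differing by multiples of $|c|$ give graded isomorphic induced modules (Proposition~\ref{rational}); your argument, as written, would ``prove'' pairwise non-isomorphism there as well, so the step as stated cannot be correct. (Alternatively, you could invoke the ``for some, and for all'' clause of Proposition~\ref{ind_iso} and then legitimately specialise to $\alpha=0$, but you cite only the ``for some'' criterion of Theorem~\ref{main1}.)

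A smaller looseness: in your first paragraph you declare the list exhausting before the transversal property is in place, and even granted that $D$ is a transversal you never write down the shift bookkeeping that makes exhaustion true, namely that for a non-rational $y$ with $y\sim_k x$, $x\in D$, one has $\Ind_y(K(m))\cong_{gr}\Ind_x(K(m)(k))\cong_{gr}V_{[x]}(m+k)$ (Proposition~\ref{ind_orbit} or the ``if'' direction of Theorem~\ref{main1}); note that $V_{[y]}(m)$ for $y\notin D$ is not literally an item of the list, since the grading of $V_{[y]}$ depends on the basepoint $y$. This is easily repaired with the tools you already cite, but it should be said; the paper does exactly this in one line.
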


\begin{proof}
We have $V_{[x]}(n) \cong_{gr} \Ind_x(K)(n) = \Ind_x(K(n))$, for $x \in D$ and $n \in \Z$. If $x, y \in D$ and $m, n \in \Z$ are such that $V_{[x]}(n) \cong_{gr} V_{[y]}(m)$, then $x = y$ and $K(n) \cong_{gr} K(m)(k)$ for some $k \in \Z_{xx}$, by Theorem~\ref{main1}. But Lemma~\ref{trivial} (or Lemma~\ref{lem_well}) implies that $x \sim_k x$ if and only if $k = 0$, so $\Z_{xx} = \{0\}$. Hence $n = m$. Finally, for any non-rational path $y \in \partial E$, we have $y \sim_k x$ for some $x \in D$ and $k \in \Z$. Then $\Ind_y(K(m)) \cong_{gr} \Ind_x(K(m)(k)) \cong_{gr} V_{[x]}(m + k)$ for all $m \in \Z$.
\end{proof}

\begin{remark}\label{ind1}
It \cite{AMMS}, it is shown that any minimal left ideal of $L_K(E)$ is of the form $L_K(E)v$, where $v$ is a line point, i.e., there is no bifurcation or a cycle based at any vertex in $T_E(v)$. This ideal is isomorphic to $V_{[x]}$, where $x$ is a sink or an irrational path, by \cite[Proposition 4.3]{Chen} (the isomorphism is graded if $V_{[x]}$ is shifted). Hence all minimal left ideals of $L_K(E)$ are isomorphic to induced modules of the form $\Ind_x(K)$. Similar result also holds for minimal graded left ideals, see Remark~\ref{ind2} below.
\end{remark}

\subsubsection{The case of rational paths}
Let $x \in \partial E$ be a rational path. As elements in the same orbit have (graded) isomorphic isotropy group algebras, we may assume that $x = c^\infty$, where $c = e_1\cdots e_n$ is a simple closed path in $E$ (see also Proposition \ref{rational}). For each $i \in \{0, \ldots, n-1\}$, we define the $i$-th rotation of $c$ to be $c_i = e_{i+1}\cdots e_{n}e_1\cdots e_i$.

By Lemma \ref{trivial}, $(\CG_E)_{c^\infty} = \{(c^{\infty}, kn, c^{\infty}) \mid k\in \Z\}$, where $n = |c|$. With the $\Z$-grading on $(\CG_E)_{c^\infty}$  induced from $\CG_E$ as considered in Section~\ref{3}, we have $K(\CG_E)_{c^\infty} \cong_{gr} K[t^n, t^{-n}]$. Here $K[t^n, t^{-n}]$ is the $\Z$-graded subalgebra of the Laurent polynomial algebra $K[t, t^{-1}]$ with support $n\Z$ (i.e., its $m$-homogeneous component is $Kt^m$ if $m \in n\Z$, and zero otherwise). On the other hand,   $K(\CG_E)_{c^\infty} \cong K\Z \cong K[t, t^{-1}]$ as non-graded algebras via the isomorphism $(c^\infty, kn, c^\infty) \mapsto t^k$. Each simple $K[t,t^{-1}]$-module is isomorphic to $K[t,t^{-1}]/I$ for some maximal ideal $I$ of $K[t,t^{-1}]$. Since $K[t,t^{-1}]$ is a principal ideal domain, $I = (f(t))$ for some irreducible polynomial $f(t)\in K[t, t^{-1}]$. Therefore, the spectral simple modules in this case are of the form 
$\Ind_{c^\infty}(K[t,t^{-1}]/(f(t)))$.
 
 At first we give a general result and then use it to study $\Ind_{c^\infty}(K[t,t^{-1}]/(f(t))).$

\begin{definition}
Let $K' \supseteq K$ be any field extension. Let $a\in K'^{*}$. We denote by $K'^{(a)}$ the following $K(\CG_E)_{c^{\infty}}$-module: $K'^{(a)} = K'$ as vector spaces and the action given by $(c^{\infty}, kn, c^{\infty})\lambda = a^k\lambda$, for all $k \in \Z$ and $\lambda \in K'$.
\end{definition}

\begin{proposition}\label{twist_iso}
Let $K' \supseteq K$ be any field extension. Let  ${\bf a} = (a_e) \in (K'^*)^{E^1}, a_c = a_{e_1}\cdots a_{e_n} = a$, and $V_{[c^{\infty}], K'}^{\bf a}$ be the ${\bf a}$-twisted Chen module over $L_{K'}(E)$. Denote by $V^{\bf a}_{[c^{\infty}], K'}|_K$ the $L_K(E)$-module obtained by restricting scalars on $V_{[c^{\infty}], K'}^{\bf a}$ from $L_{K'}(E)$ to $L_K(E)$. Then we have
$$
\Ind_{c^\infty}(K'^{(a)}) = KL_{c^{\infty}}\otimes_{K(\CG_E)_{c^{\infty}}}K'^{(a)}\cong V^{\bf a}_{[c^{\infty}],K'}|_K
$$ 
as $L_K(E)$-modules.
\end{proposition}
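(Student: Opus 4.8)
The plan is to construct an explicit $L_K(E)$-module isomorphism in the spirit of the proof of Proposition~\ref{triv}, the new feature being the non-trivial isotropy group $(\CG_E)_{c^\infty}\cong\Z$. I would first describe the bimodule $KL_{c^\infty}$ concretely. For $(y,k,c^\infty),(y',k',c^\infty)\in L_{c^\infty}$ one has $(y,k,c^\infty)^{-1}(y',k',c^\infty)=(c^\infty,k'-k,c^\infty)$, which lies in $(\CG_E)_{c^\infty}=\{(c^\infty,ln,c^\infty)\mid l\in\Z\}$ exactly when $y=y'$; hence two elements of $L_{c^\infty}$ lie in the same orbit of the right $(\CG_E)_{c^\infty}$-action iff they share the first coordinate, and then their lags differ by a multiple of $n$. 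Fixing a lag $k_y$ for each $y\in[c^\infty]$, the family $\{(y,k_y,c^\infty)\}_{y\in[c^\infty]}$ is a basis of $KL_{c^\infty}$ as a free right $K(\CG_E)_{c^\infty}$-module, so every element of $\Ind_{c^\infty}(K'^{(a)})=KL_{c^\infty}\otimes_{K(\CG_E)_{c^\infty}}K'^{(a)}$ is uniquely a $K'$-linear combination of the $(y,k_y,c^\infty)\otimes 1$; moreover, for any admissible lag $k$, $(y,k,c^\infty)\otimes\lambda=(y,k_y,c^\infty)\otimes a^{(k-k_y)/n}\lambda$, since $(c^\infty,ln,c^\infty)$ acts on $K'^{(a)}$ by $a^l$.

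The key technical step, and the one I expect to be the main obstacle, is the rational analogue of Lemma~\ref{lem_well}: if $y\in[c^\infty]$ and $\mu_1,\mu_2,\nu_1,\nu_2\in F(E)$, $p_1,p_2\in\partial E$ satisfy $y=\mu_1p_1=\mu_2p_2$ and $c^\infty=\nu_1p_1=\nu_2p_2$, then $n$ divides $(|\mu_1|-|\nu_1|)-(|\mu_2|-|\nu_2|)$ and
$$
a_{\mu_1}a_{\nu_1}^{-1}=a^{\,l}\,a_{\mu_2}a_{\nu_2}^{-1},\qquad l=\frac{(|\mu_1|-|\nu_1|)-(|\mu_2|-|\nu_2|)}{n}.
$$
I would prove this by rerunning the case analysis of Lemma~\ref{lem_well}: assuming $\mu_1=\mu_2q$ gives $p_2=qp_1$ and $\nu_1p_1=\nu_2qp_1$; if $\nu_1=\nu_2q$ this is the situation of Lemma~\ref{lem_well} and $l=0$, whereas if $\nu_1=\nu_2qq'$ with $|q'|>0$ then $q'p_1=p_1$, so $p_1=(q')^\infty$ is periodic and tail-equivalent to $c^\infty$; since $c$ is simple this forces $n\mid|q'|$ and, by commutativity of $K'$, $a_{q'}=a^{|q'|/n}$, which gives the displayed identity after substitution. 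The subcase $|\nu_1|<|\nu_2q|$ is symmetric. In particular, for a fixed lag $k$ the scalar $a_\mu a_\nu^{-1}$ (with $c^\infty=\nu p$, $y=\mu p$, $|\mu|-|\nu|=k$) depends only on $(y,k)$.

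With this in hand I would define $\varphi$ as follows. The assignment $\big((y,k,c^\infty),\lambda\big)\mapsto\lambda\,a_\mu a_\nu^{-1}\,y$, extended $K$-bilinearly, is a well-defined $K$-bilinear map $KL_{c^\infty}\times K'^{(a)}\to V^{\bf a}_{[c^\infty],K'}$ (well-definedness in the first variable is the $l=0$ case above), and it is $K(\CG_E)_{c^\infty}$-balanced, since the lemma shows that $\big((y,k,c^\infty)(c^\infty,ln,c^\infty),\lambda\big)$ and $\big((y,k,c^\infty),a^l\lambda\big)$ have the common image $\lambda\,a^l a_\mu a_\nu^{-1}y$. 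Hence it descends to a $K'$-linear map $\varphi:\Ind_{c^\infty}(K'^{(a)})\to V^{\bf a}_{[c^{\infty}], K'}|_K$. A two-sided inverse $\psi$ is defined symmetrically by $\psi(y)=a_\mu^{-1}a_\nu\big((y,|\mu|-|\nu|,c^\infty)\otimes 1\big)$, again well defined by the lemma together with the tensor identity recorded above, and $\varphi\psi=\mathrm{id}$, $\psi\varphi=\mathrm{id}$ follow by the same short computation as in Proposition~\ref{triv}.

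Finally I would verify that $\varphi$ is an $L_K(E)$-homomorphism. By Lemma~\ref{action} every $(y,k,c^\infty)=\mu\nu^*(c^\infty,0,c^\infty)$, so $KL_{c^\infty}$, hence $\Ind_{c^\infty}(K'^{(a)})$, is generated over $L_K(E)$ by $(c^\infty,0,c^\infty)\otimes 1$; thus it suffices to check $\varphi\big(\alpha\beta^*\cdot((c^\infty,0,c^\infty)\otimes\lambda)\big)=\alpha\beta^*\cdot\varphi\big((c^\infty,0,c^\infty)\otimes\lambda\big)$ for monomials $\alpha\beta^*\in L_K(E)$ with $r(\alpha)=r(\beta)$, and then invoke additivity and $K'$-linearity. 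Here $\varphi\big((c^\infty,0,c^\infty)\otimes\lambda\big)=\lambda\,c^\infty$ (take $\mu=\nu=s(c)$), the left-hand side is computed by Lemma~\ref{action}, and the right-hand side by the twisted Chen action $\alpha\beta^*\cdot p=\sigma_{\bf a}(\alpha\beta^*).p=a_\alpha a_\beta^{-1}(\alpha\beta^*.p)$ together with Remark~\ref{chen_act}; both reduce to $\lambda\,a_\alpha a_\beta^{-1}(\alpha p)$ if $c^\infty=\beta p$ for some $p\in\partial E$, and to $0$ otherwise. This shows that $\varphi$ is the desired isomorphism of $L_K(E)$-modules.
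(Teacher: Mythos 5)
Your proposal is correct and takes essentially the same route as the paper's proof: the same pair of mutually inverse maps $\varphi$ and $\psi$, balancedness of the bilinear map followed by the universal property of the tensor product, and reduction of the $L_K(E)$-linearity check to the generator $(c^\infty,0,c^\infty)\otimes\lambda$ via Lemma~\ref{action}. The only difference is organisational: you isolate as a single preliminary lemma the divisibility-and-compensation identity ($n$ divides the difference of lags and $a_{\mu_1}a_{\nu_1}^{-1}=a^{l}a_{\mu_2}a_{\nu_2}^{-1}$), which the paper proves inline in its claims (1), (2) and in the verification that $\psi$ is well defined.
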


\begin{proof}
Consider the $K$-bilinear map $\phi: KL_{c^{\infty}}\times K'^{(a)}\longrightarrow V^{\bf a}_{[c^{\infty}],K'}|_K$ defined by 
$\phi((y, k, c^{\infty}),\lambda) = a_\mu a_\nu^{-1}\lambda y$, where $\mu,\nu\in F(E)$ are such that $y=\mu p, c^{\infty}=\nu p, |\mu|-|\nu| = k$.

We claim that: \\
(1) $\phi$ is well-defined. Indeed, let $\mu_1,\mu_2,\nu_1,\nu_2\in F(E), p_1, p_2\in \partial E$ be such that $y=\mu_1p_1=\mu_2p_2,c^{\infty}=\nu_1p_1=\nu_2p_2,|\mu_1|-|\nu_1|=|\mu_2|-|\nu_2|=k$. 
We may assume $\mu_1=\mu_2q$ for some $q \in F(E)$. Then $p_2 = qp_1$, thus $\nu_1p_1=\nu_2qp_1$. Moreover,  $|\mu_1|-|\nu_1| = |\mu_2|-|\nu_2|$ implies $|\nu_1| - |\nu_2| =| q|$. Hence $\nu_1=\nu_2q$. Therefore, $a_{\mu_1}a_{\nu_1}^{-1} = a_{\mu_2q}a_{\nu_2q}^{-1}=a_{\mu_2}a_qa_{\nu_2}^{-1}a_q^{-1}=a_{\mu_2}a_{\nu_2}^{-1}.$\\
(2) $\phi$ is a $K(\CG_E)_{c^{\infty}}$-balanced product. Indeed, let $y=\mu p, c^\infty=\nu p,|\mu| - |\nu| = k$. Then $p=c_{i}^\infty$ for some $i \in\{0,1, \ldots, n-1\}$. For $(c^{\infty}, ln, c^{\infty}) \in (\CG_E)_{c^{\infty}}$ with $l \geq 0$, we have
 \begin{align*}
&\phi((y, k, c^\infty)(c^\infty, ln ,c^\infty), \lambda) = \phi((y, k + ln, c^{\infty}), \lambda) = \phi((\mu c_i^\infty, k + ln, \nu c_i^\infty), \lambda) \\
&=\phi((\mu c_i^l c_i^\infty, k + ln, \nu c_i^\infty), \lambda) = a_{\mu c_i^l}a_{\nu}^{-1}\lambda y = a_{\mu}a^l a_{\nu}^{-1}\lambda y,
\end{align*} 
since $a_{c_i} = a_{e_{i +1}}\cdots a_{e_n}a_{e_1}\cdots a_{e_i} = a_c = a$ (here $c_i^ 0 = s(c_i)$). In case $l < 0$, we have
$$
\phi((y, k + ln, c^{\infty}), \lambda) = \phi((\mu c_i^\infty, k + ln, \nu c_i^{-l}c_i^\infty), \lambda) = a_{\mu}a_{\nu c_i^{-l}}^{-1}\lambda y = a_{\mu}a^l a_{\nu}^{-1}\lambda y.
$$
On the other hand,
$$
\phi((y, k, c^\infty),(c^\infty,ln,c^\infty)\lambda) = \phi((y, k, c^\infty), a^l \lambda) = a_{\mu}a_{\nu}^{-1}a^l \lambda y.
$$
Hence $\phi$ is a $K(\CG_E)_{c^\infty}$-balanced product.

By the universal property of the tensor product, there exists a unique $K$-linear map $\varphi: KL_{c^\infty}\otimes_{K(\CG_E)_{c^\infty}} K'^{(a)}\longrightarrow V^{\bf a}_{[c^{\infty}],K'}|_K$ such that
$$
\varphi((y, k, c^\infty)\otimes \lambda) = \phi((y, k, c^\infty), \lambda) = a_{\mu}a_{\nu}^{-1}\lambda y,
$$ 
where $y=\mu c_i^\infty, c^\infty=\nu c_i^\infty, |\mu|-|\nu| = k$.

Consider the $K$-linear map $\psi: V^{\bf a}_{[c^{\infty}], K'}|_K\longrightarrow KL_{c^\infty}\otimes_{K(\CG_E)_{c^\infty}} K'^{(a)}$ defined by
$$
\psi(y)=(y,|\mu|-|\nu|,c^\infty)\otimes a_{\mu}^{-1}a_{\nu},
$$ 
where $y = \mu c_i^{\infty}$ and $c^{\infty} = \nu c_i^{\infty}$.

We show that $\psi$ is well-defined. Indeed, let $y=\mu_1c_{i_1}^\infty=\mu_2c_{i_2}^\infty$, $c^\infty=\nu_1c_{i_1}^\infty=\nu_2 c_{i_2}^\infty$ and assume $|\mu_1|\geq |\mu_2|$. Then $\mu_1=\mu_2 q$ for some $q \in F(E)$. Hence $c_{i_2}^\infty=qc_{i_1}^\infty$, which implies that $q = c_{i_2}^ke_{i_2 + 1}\cdots e_{i_n}e_1\cdots e_{i_1}c_{i_1}^{l}.$  Let $\nu_1=c^{l_1}e_1\cdots e_{i_1}, \nu_2=c^{l_2}e_1\cdots e_{i_2}$. Then we have
\begin{align*}
&|\mu_1|-|\mu_2|+|\nu_2|-|\nu_1| = |q| + |\nu_2| - |\nu_1|\\
&= n - i_2 + i_1 + (k+l)n + (l_2-l_1)n+i_2-i_1 = (k + l + l_2 - l_1 + 1)n.
\end{align*}
Therefore,
\begin{align*}
& (y,|\mu_1|-|\nu_1|,c^\infty)\otimes a_{\mu_1}^{-1}a_{\nu_1} \\
& = (y,|\mu_2|-|\nu_2|,c^{\infty})(c^{\infty},|\mu_1|-|\nu_1|-|\mu_2|+|\nu_2|, c^{\infty})\otimes a_{\mu_2}^{-1}a_q^{-1}a_{\nu_1}\\
& = (y,|\mu_2|-|\nu_2|,c^{\infty})\otimes a^{k + l + l_2 - l_1 + 1} a_{\mu_2}^{-1}a_{e_{i_2+1}}^{-1}\cdots a_{e_{i_n}}^{-1}a^{-k - l + l_1} \\
& = (y,|\mu_2|-|\nu_2|,c^{\infty})\otimes a_{\mu_2}^{-1}a^{l_2}a_{e_{i_2+1}}^{-1}\cdots a_{e_{i_n}}^{-1}a_{e_1}a_{e_2}\cdots a_{e_n} \\
& = (y,|\mu_2|-|\nu_2|, c^\infty)\otimes a_{\mu_2}^{-1}a^{l_2}a_{e_1}\cdots a_{e_{i_2}} 
= (y,|\mu_2|-|\nu_2|, c^\infty)\otimes a_{\mu_2}^{-1}a_{\nu_2},
\end{align*}
which shows that $\psi$ is well-defined.

It is easy to see that $\varphi$ and $\psi$ are inverses of each other. Moreover, observe that for all $ \mu,\nu\in F(E)$ with $r(\mu) = r(\nu)$ and $\lambda \in K'$, we have
\begin{equation}\label{eq13}
\varphi(\mu\nu^*((c^\infty,0,c^\infty)\otimes \lambda)) = \mu\nu^*\varphi ((c^\infty,0,c^\infty)\otimes \lambda). 
\end{equation}
Indeed, if $c^\infty = \nu c_i^\infty$, then the left-hand side of \eqref{eq13} is 
$\varphi((\mu c_i^\infty,|\mu|-|\nu|, \nu c_i^\infty)\otimes \lambda)=a_{\mu}a_{\nu}^{-1}\lambda\mu c_i^\infty$, while the right-hand side is $\mu\nu^*\lambda c^\infty=\sigma_{\bf a}(\mu\nu^*).(\lambda c^\infty) = a_{\mu}a_{\nu}^{-1}\lambda \mu c_i^\infty$ by Remark~\ref{chen_act} (both sides equal 0 otherwise). Hence, by using \eqref{eq13} similarly as in the last part of the proof of Proposition \ref{triv}, we deduce that $\varphi$ is a $L_K(E)$-isomorphism. 
\end{proof}

Now we derive some consequences from Proposition~\ref{twist_iso}. Firstly, when $K' = K$, it is clear that $K^{(a)}$ is a simple $K(\CG_E)_{c^\infty}$-module. Since the induction functor maps simple modules to simple modules, we obtain immediately that the Chen module $V_{[c^\infty]}^{\bf a}$ is simple.

\begin{corollary} \label{cor2}
For ${\bf a} \in (K^*)^{E^1}$ and $a = a_c$, we have $\Ind_{c^\infty}(K^{(a)}) \cong V_{[c^\infty]}^{\bf a}$ as simple $L_K(E)$-modules.
\end{corollary}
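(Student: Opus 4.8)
The plan is to derive this corollary directly as the special case $K' = K$ of Proposition~\ref{twist_iso}. When the field extension is trivial, the module $K'^{(a)}$ becomes the one-dimensional $K(\CG_E)_{c^\infty}$-module $K^{(a)}$, and the restriction-of-scalars construction $V^{\bf a}_{[c^\infty], K'}|_K$ is nothing but the ordinary twisted Chen module $V^{\bf a}_{[c^\infty]}$ over $L_K(E)$. So Proposition~\ref{twist_iso} immediately yields the $L_K(E)$-module isomorphism $\Ind_{c^\infty}(K^{(a)}) = KL_{c^\infty} \otimes_{K(\CG_E)_{c^\infty}} K^{(a)} \cong V^{\bf a}_{[c^\infty]}$.

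It then remains only to observe that both sides are simple $L_K(E)$-modules. First I would note that $K^{(a)}$ is one-dimensional over $K$, hence trivially simple as a $K(\CG_E)_{c^\infty}$-module. Next, by Lemma~\ref{trivial} and the identification $K(\CG_E)_{c^\infty} \cong K[t, t^{-1}]$, the algebra $K(\CG_E)_{c^\infty}$ is the group algebra $K\Z$; applying the (non-graded) induction functor $\Ind_{c^\infty}$ and invoking the fact that $\Ind_x$ sends simple $R\CG_x$-modules to simple $A_R(\CG)$-modules (Proposition~\ref{ind_simple} in its non-graded form, i.e.\ \cite[Proposition 7.19]{S1}), we conclude that $\Ind_{c^\infty}(K^{(a)})$ is a simple $A_K(\CG_E)$-module. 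Transporting along the graded isomorphism $\pi: L_K(E) \to A_K(\CG_E)$ of \eqref{iso}, it is a simple $L_K(E)$-module, and therefore so is the isomorphic module $V^{\bf a}_{[c^\infty]}$.

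There is essentially no obstacle here: the corollary is a packaging of Proposition~\ref{twist_iso} with the simplicity statement for $\Ind_x$. The only point requiring a line of care is checking that, under $K' = K$, the definition of $K'^{(a)}$ specialises correctly to $K^{(a)}$ and that $V^{\bf a}_{[c^\infty], K}|_K = V^{\bf a}_{[c^\infty]}$ — both are immediate from the definitions, since restricting scalars along the identity $L_K(E) = L_K(E)$ does nothing. Hence the proof is short: cite Proposition~\ref{twist_iso} for the isomorphism, note that $K^{(a)}$ is simple, and apply the module-induction simplicity result.
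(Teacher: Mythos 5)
Your proposal is correct and matches the paper's own argument: the authors likewise obtain Corollary~\ref{cor2} by specialising Proposition~\ref{twist_iso} to $K' = K$, observing that $K^{(a)}$ is a simple $K(\CG_E)_{c^\infty}$-module, and invoking the fact that the induction functor carries simple modules to simple modules. Your extra remark that restriction of scalars along the identity is trivial is exactly the (implicit) observation the paper relies on, so nothing is missing.
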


We also recover \cite[Proposition 6.1(2)]{Chen} with a different proof.

\begin{corollary} \label{cor3}
For ${\bf a}, {\bf b} \in (K^*)^{E^1}$, $V_{[c^{\infty}]}^{\bf a} \cong V_{[c^{\infty}]}^{\bf b}$ if and only if $a_c=b_c$ (i.e., ${\bf a}{\bf b}^{-1}$ is $c$-stable).
\end{corollary}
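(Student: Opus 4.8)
The plan is to move the problem to the isotropy group algebra $K(\CG_E)_{c^{\infty}}$ by means of the identification of twisted Chen modules as induced modules. By Corollary~\ref{cor2} we have $V_{[c^{\infty}]}^{\bf a} \cong \Ind_{c^{\infty}}(K^{(a_c)})$ and $V_{[c^{\infty}]}^{\bf b} \cong \Ind_{c^{\infty}}(K^{(b_c)})$ as $L_K(E)$-modules (equivalently, as $A_K(\CG_E)$-modules). The ``if'' direction is then immediate, since $a_c = b_c$ makes the two induced modules literally equal. For the ``only if'' direction, note that $K^{(a_c)}$ and $K^{(b_c)}$ are one-dimensional over $K$, hence simple $K(\CG_E)_{c^{\infty}}$-modules, so both induced modules are spectral simple modules induced from the single unit $c^{\infty}$, and Steinberg's classification \cite[Theorem 7.26]{S1}, applied to the ample groupoid $\CG_E$, governs when they are isomorphic.

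By that classification, $\Ind_{c^{\infty}}(K^{(a_c)}) \cong \Ind_{c^{\infty}}(K^{(b_c)})$ if and only if $K^{(b_c)}$, transported to a $K(\CG_E)_{c^{\infty}}$-module along some $z \in (\CG_E)_{c^{\infty}}$, is isomorphic to $K^{(a_c)}$. But $(\CG_E)_{c^{\infty}}$ is isomorphic to $\Z$ by Lemma~\ref{trivial}, hence abelian; since this transport is given by $u \cdot n = (z^{-1}uz)\,n$, it coincides with the original action. Thus the criterion collapses to the requirement that $K^{(a_c)} \cong K^{(b_c)}$ as $K(\CG_E)_{c^{\infty}}$-modules.

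It then remains to compare these two one-dimensional modules. Under the algebra isomorphism $K(\CG_E)_{c^{\infty}} \cong K[t, t^{-1}]$ sending $(c^{\infty}, kn, c^{\infty}) \mapsto t^{k}$ (recorded before Corollary~\ref{cor2}), the module $K^{(a)}$ becomes the simple module $K[t, t^{-1}]/(t - a)$, on which $t$ acts as multiplication by $a$; two such modules are isomorphic exactly when $t$ acts by the same scalar, so $K^{(a_c)} \cong K^{(b_c)}$ if and only if $a_c = b_c$. Since $({\bf a}{\bf b}^{-1})_c = a_c b_c^{-1}$, this equality says precisely that ${\bf a}{\bf b}^{-1}$ is $c$-stable, which is the assertion.

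I do not expect a genuine obstacle here: the only points needing care are checking the hypotheses of \cite[Theorem 7.26]{S1} (simplicity of $K^{(a)}$, which is trivial, and therefore simplicity and spectrality of the induced modules, which follow from the non-graded analogues of Propositions~\ref{res_ind} and~\ref{ind_simple}) and the observation that conjugation inside the abelian group $(\CG_E)_{c^{\infty}}$ is trivial. If one prefers to avoid \cite{S1} altogether, a direct argument is also available: the distinguished basis vector $c^{\infty} \in V_{[c^{\infty}]}^{\bf a}$ satisfies $c \cdot c^{\infty} = a_c\, c^{\infty}$, so $a_c$ is an invariant of the module recoverable from the action of the cycle $c$ on this essentially unique eigenvector; however, routing the argument through the induced-module picture keeps it uniform with the rest of the section.
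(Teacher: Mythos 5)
Your proof is correct and takes essentially the same route as the paper: both reduce the question via Corollary~\ref{cor2} to comparing the one-dimensional isotropy modules $K^{(a_c)}$ and $K^{(b_c)}$ over $K(\CG_E)_{c^{\infty}}$, where the answer is immediate. The only cosmetic difference is that the paper gets the ``only if'' direction in one line from $\Res_{c^\infty}\Ind_{c^\infty}\cong \Id$, whereas you route it through the full classification of \cite[Theorem 7.26]{S1} and then observe that conjugation in the abelian isotropy group is trivial; both are valid instances of the same induction--restriction machinery.
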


\begin{proof}
Since $\Res_x \Ind_x$ is naturally isomorphic to the identity functor, it follows from Corollary~\ref{cor2} that $V_{[c^{\infty}]}^{\bf a} \cong V_{[c^{\infty}]}^{\bf b}$ if and only if $K^{(a_c)} \cong K^{(b_c)}$ as $K(\CG_E)_{c^\infty}$-modules. But this happens only when $a_c=b_c$.
\end{proof}

Next, we let $K' = K[t, t^{-1}]/(f(t))$ and $a = \overline{t}$, where $f(t)$ is an irreducible polynomial in $K[t, t^{-1}]$. Without changing $K'$, we may take $f(t)$ to be a monic irreducible polynomial in $K[t]$ with $f(0) \neq 0$. Then $K'$ is a simple $K[t, t^{-1}]$-module, and all simple $K[t, t^{-1}]$-modules arise in this way. Recall that by the definition, $V_{[c^\infty]}^f = V_{[c^\infty], K'}^{\bf t}|_K$, where ${\bf t} \in (K'^*)^{E^1}$ has $\overline t$ in the $e_1$-coordinate and 1 in other coordinates. Applying Proposition~\ref{twist_iso}, we recover \cite[Lemma 3.3]{AR} as follows (we do not need the assumption that $c$ is an exclusive cycle).

\begin{corollary}\label{cor4} 
$\Ind_{c^\infty}(K') \cong \Ind_{c^\infty}(K'^{(\overline t)}) \cong V^{f}_{[c^\infty]}$ as simple $L_K(E)$-modules.
\end{corollary}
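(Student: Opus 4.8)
The plan is to read off both isomorphisms from Proposition~\ref{twist_iso} once the $K(\CG_E)_{c^\infty}$-module $K'$ has been identified. Recall that, by definition, $V^f_{[c^\infty]} = V^{\bf t}_{[c^\infty],K'}|_K$, where $K' = K[t,t^{-1}]/(f(t))$ and ${\bf t} \in (K'^*)^{E^1}$ has $\overline t$ in the $e_1$-coordinate and $1$ in all other coordinates; in particular, writing ${\bf t} = (t_e)$, the product along $c$ is $t_c = t_{e_1}\cdots t_{e_n} = \overline t$. Since $f$ is irreducible and $K[t,t^{-1}]$ is a principal ideal domain, $(f(t))$ is a maximal ideal, so $K'$ is a field and hence a simple $K[t,t^{-1}]$-module. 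Applying Proposition~\ref{twist_iso} to the field extension $K' \supseteq K$ with ${\bf a} = {\bf t}$, so that $a = a_c = \overline t$, gives the $L_K(E)$-isomorphism
$$
\Ind_{c^\infty}(K'^{(\overline t)}) = KL_{c^\infty}\otimes_{K(\CG_E)_{c^\infty}}K'^{(\overline t)} \cong V^{\bf t}_{[c^\infty],K'}|_K = V^f_{[c^\infty]},
$$
which is the second claimed isomorphism.

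For the first isomorphism I would show that $K'$ and $K'^{(\overline t)}$ are literally the same $K(\CG_E)_{c^\infty}$-module. Under the non-graded algebra isomorphism $K(\CG_E)_{c^\infty} \cong K[t,t^{-1}]$ recorded earlier, which sends $(c^\infty, kn, c^\infty) \mapsto t^k$ (with $n = |c|$), the natural action of $K[t,t^{-1}]$ on $K' = K[t,t^{-1}]/(f(t))$ has $(c^\infty, n, c^\infty)$ acting as multiplication by $\overline t$, and hence $(c^\infty, kn, c^\infty)$ acting as multiplication by $\overline{t}^{\,k}$ for every $k \in \Z$. This is exactly the action defining $K'^{(\overline t)}$, so $K' = K'^{(\overline t)}$ as $K(\CG_E)_{c^\infty}$-modules, and therefore $\Ind_{c^\infty}(K') = \Ind_{c^\infty}(K'^{(\overline t)})$.

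Finally I would note that all three modules are simple: $K'$ is a simple $K(\CG_E)_{c^\infty}$-module, so the non-graded analogue of Proposition~\ref{ind_simple} (that is, \cite[Proposition 7.19]{S1}, already invoked just before Corollary~\ref{cor2}) shows that $\Ind_{c^\infty}(K')$ is a simple $A_K(\CG_E)$-module, hence a simple $L_K(E)$-module via the graded isomorphism $\pi$ of \eqref{iso}; the displayed chain of isomorphisms then transports simplicity to the other two modules. I do not expect a real obstacle here; the only points deserving a word of care are that Proposition~\ref{twist_iso} applies verbatim over the extension $K' \supseteq K$ (legitimate because $(f(t))$ is maximal in $K[t,t^{-1}]$), and that the placement of $\overline t$ in the $e_1$-coordinate of ${\bf t}$ is immaterial, since Proposition~\ref{twist_iso} depends on ${\bf t}$ only through $a_c = \overline t$ — which is also why running the same argument over $K' = K$ instead recovers Corollaries~\ref{cor2} and~\ref{cor3}.
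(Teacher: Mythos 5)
Your argument is correct and follows essentially the same route as the paper: the paper's proof consists precisely of the observation that $K'^{(\overline t)} \cong K'$ as (simple) $K(\CG_E)_{c^\infty}$-modules, after which both isomorphisms follow from Proposition~\ref{twist_iso} applied with ${\bf a} = {\bf t}$ and $a_c = \overline t$. Your additional remarks on simplicity and on the placement of $\overline t$ being immaterial are accurate elaborations of the same argument.
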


\begin{proof}
It suffices to observe that $K'^{(\overline t)} \cong K'$ as (simple) $K(\CG_E)_{c^\infty}$-modules.
\end{proof}

\begin{lemma} \label{non_graded}
For each $a \in K^{*}$ and ${\bf a}\in (K^*)^{E^1}$ such that $a_c = a$, we have
$V^{t - a}_{[c^\infty]}\cong V^{\bf a}_{[c^\infty]}$ as $L_K(E)$-modules. In particular, $V^{t - 1}_{[c^\infty]}\cong V_{[c^\infty]}$.
\end{lemma}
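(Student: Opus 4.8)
The plan is to deduce Lemma~\ref{non_graded} directly from Corollary~\ref{cor2}, which identifies each Chen module $V^{\bf a}_{[c^\infty]}$ with an induced module $\Ind_{c^\infty}(K^{(a_c)})$, together with Corollary~\ref{cor4} (or rather the special case of Proposition~\ref{twist_iso} with $K' = K$) applied to the module $V^{t-a}_{[c^\infty]}$. First I would observe that $V^{t-a}_{[c^\infty]}$ is, by definition, $V^{\bf t}_{[c^\infty], K'}|_K$ where $K' = K[t,t^{-1}]/(t - a)$; but since $t - a$ is a linear polynomial, $K' = K[t,t^{-1}]/(t-a) \cong K$ as fields, via $\overline t \mapsto a$ (note $a \neq 0$ ensures $t - a$ is a genuine element of $K[t,t^{-1}]$ with invertible image, and the quotient is indeed a field). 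Under this identification $K'^* \ni \overline t$ corresponds to $a \in K^*$, so ${\bf t}$ becomes the tuple ${\bf a}' \in (K^*)^{E^1}$ having $a$ in the $e_1$-coordinate and $1$ elsewhere, and in particular $a'_c = a = a_c$.

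Next I would apply Corollary~\ref{cor2} twice. On one hand, $V^{t - a}_{[c^\infty]} = V^{{\bf a}'}_{[c^\infty]} \cong \Ind_{c^\infty}(K^{(a'_c)}) = \Ind_{c^\infty}(K^{(a)})$ as $L_K(E)$-modules. On the other hand, $V^{\bf a}_{[c^\infty]} \cong \Ind_{c^\infty}(K^{(a_c)}) = \Ind_{c^\infty}(K^{(a)})$, using the hypothesis $a_c = a$. Combining these two graded (hence, in particular, ordinary) $L_K(E)$-isomorphisms gives $V^{t-a}_{[c^\infty]} \cong V^{\bf a}_{[c^\infty]}$, which is the first assertion. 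The particular case then follows by taking $a = 1$: the tuple ${\bf 1} = (1)_{e \in E^1}$ certainly satisfies $1_c = 1$, and $V^{\bf 1}_{[c^\infty]} = V_{[c^\infty]}$ by the remark following the definition of twisted Chen modules, so $V^{t-1}_{[c^\infty]} \cong V_{[c^\infty]}$.

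Alternatively, one can bypass the induced-module machinery and argue purely on the level of twisted Chen modules: by Corollary~\ref{cor3}, two twisted Chen modules $V^{\bf b}_{[c^\infty]}$ and $V^{\bf b'}_{[c^\infty]}$ over the same field are isomorphic precisely when $b_c = b'_c$. Identifying $V^{t-a}_{[c^\infty]}$ with $V^{{\bf a}'}_{[c^\infty]}$ as above, we have ${\bf a}'_c = a = a_c$, so Corollary~\ref{cor3} yields $V^{{\bf a}'}_{[c^\infty]} \cong V^{\bf a}_{[c^\infty]}$ at once. I expect no serious obstacle here; the only point requiring a moment's care is the bookkeeping of the scalar-restriction identification $K[t,t^{-1}]/(t-a) \cong K$ and checking that under it the distinguished tuple ${\bf t}$ (which lives over $K'$) really becomes a tuple over $K$ with the correct $c$-product, so that the hypotheses of Corollary~\ref{cor2} (or Corollary~\ref{cor3}) apply verbatim. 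The rest is a two-line chain of isomorphisms.
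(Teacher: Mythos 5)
Your argument is correct, and it reaches the conclusion by a route that is related to but genuinely distinct from the paper's. The paper stays inside the isotropy-group picture: it defines the evaluation map $\theta\colon K[t,t^{-1}]\to K^{(a)}$, $g\mapsto g(a)$, checks that it is a surjective $K(\CG_E)_{c^\infty}$-homomorphism with kernel $(t-a)$, so $K[t,t^{-1}]/(t-a)\cong K^{(a)}$ as $K(\CG_E)_{c^\infty}$-modules, and then pushes this isomorphism through $\Ind_{c^\infty}$, quoting Corollaries~\ref{cor2} and~\ref{cor4} to translate back to $V^{t-a}_{[c^\infty]}$ and $V^{\bf a}_{[c^\infty]}$. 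You instead unwind the definition of $V^{t-a}_{[c^\infty]}$ itself: since $t-a$ is linear (and $a\neq 0$, so it is admissible), the coefficient field $K'=K[t,t^{-1}]/(t-a)$ is identified with $K$ by the same evaluation at $a$, this time viewed as a $K$-algebra isomorphism fixing $K$; hence the scalar restriction is vacuous and $V^{t-a}_{[c^\infty]}$ is literally the $K$-twisted module $V^{{\bf a}'}_{[c^\infty]}$ with $a'_c=a=a_c$, after which Corollary~\ref{cor3} (or Corollary~\ref{cor2} applied twice) finishes the proof. So the same evaluation map is the engine in both arguments, but you apply it at the level of the base field and the definition of $V^f$, avoiding Corollary~\ref{cor4} and the explicit isotropy-module computation, whereas the paper's formulation makes the module-theoretic content explicit and is the same mechanism that handles arbitrary irreducible $f$; your shortcut works precisely because $f=t-a$ produces no genuine field extension. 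Two small cosmetic points: the isomorphisms furnished by Corollary~\ref{cor2} are not graded (these modules over a rational path are not graded), so drop the word ``graded'' there --- only ordinary isomorphisms are needed or available; and it is worth stating explicitly that the identification $K[t,t^{-1}]/(t-a)\cong K$ restricts to the identity on $K$, which is exactly what makes the induced identification $L_{K'}(E)\cong L_K(E)$ compatible with the scalar-restriction and lets Corollaries~\ref{cor2} and~\ref{cor3} apply verbatim, as you anticipated.
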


\begin{proof}
Consider the map $\theta: K[t, t^{-1}]\rightarrow K^{(a)}$ defined by $\theta(g)=g(a)$. Then $\theta$ is surjective: if $b\in K^{(a)}$, then there is $g = t + b - a$ such that $\theta(g) = g(a) = b$.

Recall that $K[t, t^{-1}]$ and $K^{(a)}$ are $K(\CG_E)_{c^\infty}$-modules, where $K(\CG_E)_{c^\infty} \cong K[t, t^{-1}]$ via the isomorphism $(c^\infty, kn, c^\infty) \mapsto t^k$. We have
$$ 
\theta((c^\infty, kn, c^\infty)g) = \theta(t^k g) = a^k g(a) = (c^\infty, kn,c^\infty)g(a) = (c^\infty, kn, c^\infty)\theta(g)
$$ 
by the definition of $K^{(a)}$. Hence $\theta$ is a surjective $K(\CG_E)_{c^\infty}$-homomorphism, so $K[t, t^{-1}]/\Ker \theta\cong K^{(a)}$. But $\Ker \theta = (t - a)$, thus $K[t, t^{-1}]/(t - a) \cong K^{(a)}$ as $K(\CG_E)_{c^\infty}$-modules. It follows that the corresponding induced modules are isomorphic, that is, $V^{t-a}_{[c^\infty]}\cong V^{\bf a}_{[c^\infty]}$, by Corollaries~\ref{cor2} and \ref{cor4}. In particular, we have $V^{t-1}_{[c^\infty]}\cong V^{\bf 1}_{[c^\infty]} = V_{[c^\infty]}$.
\end{proof}

We also obtain a short proof of \cite[Proposition 3.6]{HR} (see \cite{AHLS} for another proof).

\begin{corollary}
For $x \in \partial E$, the simple $L_K(E)$-module $V_{[x]}$ is graded if and only if $x$ is not a rational path.
\end{corollary}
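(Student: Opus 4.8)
The plan is to establish both implications using the machinery of Proposition~\ref{triv} and Proposition~\ref{twist_iso} together with the graded/non-graded classification of isotropy group modules. Recall that $V_{[x]} \cong_{gr} \Ind_x(K)$ when $x$ is not rational (Proposition~\ref{triv}), so the ``if'' direction is immediate: $\Ind_x$ sends graded modules to graded modules, and since $K(\CG_E)_x \cong K$ is trivially graded, the simple module $K$ over it is automatically a graded module; hence $V_{[x]}$ is graded. Thus only the ``only if'' direction requires work.

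For the converse, suppose $x$ is a rational path; by replacing $x$ with a tail-equivalent path (which does not affect whether $V_{[x]}$ is graded, up to a shift, since $V_{[x]} \cong_{gr} V_{[y]}(k)$ for tail-equivalent $x, y$ — this follows from Proposition~\ref{ind_orbit} once one knows $V_{[x]} \cong_{gr} \Ind_x$ of the appropriate module) one may assume $x = c^\infty$ for a simple closed path $c = e_1 \cdots e_n$. First I would identify $V_{[c^\infty]}$ as an induced module: by Lemma~\ref{non_graded}, $V_{[c^\infty]} \cong V^{t-1}_{[c^\infty]}$, and by Corollary~\ref{cor4} (with $f(t) = t - 1$, $K' = K[t,t^{-1}]/(t-1) \cong K$), this is $\Ind_{c^\infty}(K')$ where $K'$ is the one-dimensional $K(\CG_E)_{c^\infty}$-module on which the generator $(c^\infty, n, c^\infty)$ acts as the identity (i.e. $K^{(1)}$). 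Since $K(\CG_E)_{c^\infty} \cong_{gr} K[t^n, t^{-n}]$ with support $n\Z$, the key point is that $K^{(1)}$ is \emph{not} a graded $K[t^n, t^{-n}]$-module: any $\Z$-grading on a one-dimensional $K[t^n, t^{-n}]$-module forces the generator $t^n$ to act by zero on each homogeneous component unless the component sits in degree shifted by $n$, so a one-dimensional module concentrated in a single degree cannot be invariant under the degree-$n$ action of $t^n$ when $n \geq 1$. More precisely, the only graded $K(\CG_E)_{c^\infty}$-modules that are one-dimensional over $K$ do not exist (graded simple modules over $K[t^n,t^{-n}]$ have dimension a multiple of $n$ over $K$, or rather are of the form $K[t^n,t^{-n}]/(f(t^n))$ with appropriate grading).

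The cleanest route is: if $V_{[c^\infty]}$ were graded simple, then by Theorem~\ref{main1} (applied to the $\Z$-graded groupoid $\CG_E$) it would be graded isomorphic to $\Ind_{c^\infty}(N)$ for some graded simple $K(\CG_E)_{c^\infty}$-module $N$; applying $\Res_{c^\infty}$ and using Proposition~\ref{res_ind} gives $\Res_{c^\infty}(V_{[c^\infty]}) \cong_{gr} N$. On the other hand, the non-graded isomorphism $V_{[c^\infty]} \cong \Ind_{c^\infty}(K^{(1)})$ gives $\Res_{c^\infty}(V_{[c^\infty]}) \cong K^{(1)}$ as non-graded $K(\CG_E)_{c^\infty}$-modules. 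So $N$ is a graded $K(\CG_E)_{c^\infty}$-module which is one-dimensional over $K$, on which $(c^\infty, n, c^\infty)$ acts invertibly (as the identity). But $(c^\infty, n, c^\infty)$ is homogeneous of degree $n \neq 0$, so it cannot act invertibly on a one-dimensional graded module concentrated in a single degree — that would force $M_k = M_{k+n}$ for the unique nonzero degree $k$, which is absurd when $n \geq 1$ and $M$ is one-dimensional. This contradiction shows $V_{[c^\infty]}$ is not graded, completing the proof. The main obstacle is bookkeeping the reduction to $x = c^\infty$ and making sure the grading on $K(\CG_E)_{c^\infty}$ (with support $n\Z$) is used correctly when ruling out one-dimensional graded modules with an invertible degree-$n$ action; everything else is a direct appeal to the already-established correspondences.
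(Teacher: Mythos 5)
Your proof is correct and follows essentially the paper's own route: reduce to $x=c^\infty$, identify $\Res_{c^\infty}(V_{[c^\infty]})$ with the one-dimensional module $K^{(1)}$ on which the degree-$n$ isotropy generator acts as the identity, and observe that such a module admits no grading over $K(\CG_E)_{c^\infty}\cong_{gr}K[t^n,t^{-n}]$; the paper reaches the same contradiction a bit more directly, using only that $\Res_{c^\infty}$ carries graded modules to graded modules instead of invoking Theorem~\ref{main1}. One small cleanup: the reduction to $x=c^\infty$ needs no shifting argument (your stated justification is circular for rational $x$), since $[x]=[c^\infty]$ means $V_{[x]}$ and $V_{[c^\infty]}$ are literally the same module.
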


\begin{proof}
If $x$ is a rational path, then $V_{[x]} = V_{[c^{\infty}]}$ for a simple closed path $c$. By Corollary~\ref{cor2}, $\Res_{c^\infty}(V_{[c^{\infty}]}) \cong \Res_{c^\infty}(\Ind_{c^\infty}(K)) \cong K$ as $K(\CG_E)_{c^\infty}$-module. If  $V_{[c^{\infty}]}$ is graded, then $\Res_{c^\infty}(V_{[c^{\infty}]})$ is a graded module. But, as a one-dimensional vector space with the action induced from $K$: $(c^\infty, kn, c^\infty)\lambda = \lambda$ for all $k \in \Z$, it is impossible to equip $\Res_{c^\infty}(V_{[c^{\infty}]})$ with a grading. This proves the "only if" part. The "if" part is Lemma~\ref{Z_graded}.
\end{proof}

For graded simple modules, we observe that the only nonzero graded ideal of $K(\CG_E)_{c^{\infty}} \cong_{gr} K[t^n, t^{-n}]$ is $K(\CG_E)_{c^{\infty}}$ itself. It follows that up to graded isomorphism, all the graded simple $K(\CG_E)_{c^{\infty}}$-modules are the shifted modules $K[t^n, t^{-n}](m)$ for $m \in \Z$. Moreover, it suffices to take $m \in \{0, \ldots, n-1\}$, as $K[t^n, t^{-n}] \cong_{gr} K[t^n, t^{-n}](k)$ for all $k \in n\Z$. Consequently, the modules $\Ind_{c^\infty}(K[t^n, t^{-n}](m))$ are graded simple $L_K(E)$-modules. These modules are not simple, as $K[t^n, t^{-n}]$ is not a simple $K[t^n, t^{-n}]$-module. 

\begin{proposition} \label{rational}
Let $C$ be the set of all simple closed paths, where each simple closed path and all its rotations are counted as one. Then the set 
$$
\{\Ind_{c^\infty}(K[t^{|c|}, t^{-|c|}](m)) \mid c \in C, \, 0 \leq m \leq |c| - 1\}
$$
forms a full list (up to graded isomorphism) of pairwise non-isomorphic graded simple $L_K(E)$-modules induced from isotropy groups of rational paths.
\end{proposition}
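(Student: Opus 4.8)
The plan is to deduce the statement from Theorem~\ref{main1}, using Lemma~\ref{trivial} to identify the isotropy groups and the classification recorded just above: up to graded isomorphism, the graded simple $K(\CG_E)_{c^{\infty}}$-modules are exactly the shifts $K[t^{n},t^{-n}](m)$ with $0\le m\le n-1$, where $n=|c|$. Each module on the list is graded simple by Proposition~\ref{ind_simple} and is clearly induced from the isotropy group of the rational path $c^{\infty}$, so it remains to check that the list is exhaustive and irredundant.

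For exhaustiveness, let $x\in\partial E$ be a rational path and $\Ind_x(N)$ a graded simple module with $N$ graded simple over $K(\CG_E)_x$. Then $x$ is tail-equivalent to $c^{\infty}$ for some closed path, which may be taken simple, hence to $c^{\infty}$ for the chosen representative $c\in C$ of its rotation class. By Proposition~\ref{ind_orbit}, replacing $x$ by $c^{\infty}$ merely transports $N$ along the graded isomorphism $K(\CG_E)_x\cong_{gr}K(\CG_E)_{c^{\infty}}$ (conjugation by a connecting morphism; it is degree-preserving because $\Z$ is abelian) and then shifts it, so $\Ind_x(N)\cong_{gr}\Ind_{c^{\infty}}(N'')$ for some graded simple $K(\CG_E)_{c^{\infty}}$-module $N''$. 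By the classification above, $N''\cong_{gr}K[t^{|c|},t^{-|c|}](m)$ for a unique $m\in\{0,\dots,|c|-1\}$, so $\Ind_x(N)$ lies on the list.

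For irredundancy I would first record that $c\mapsto[c^{\infty}]$ is a bijection from $C$ onto the set of orbits of rational paths. Surjectivity is the definition of rationality. For injectivity, suppose $c^{\infty}\sim_k d^{\infty}$ with $c,d$ simple closed paths. Since $c^{\infty}$ and $d^{\infty}$ then lie in one orbit, $K(\CG_E)_{c^{\infty}}\cong_{gr}K(\CG_E)_{d^{\infty}}$, i.e.\ $K[t^{|c|},t^{-|c|}]\cong_{gr}K[t^{|d|},t^{-|d|}]$, which forces $|c|=|d|=:n$; writing $c^{\infty}=\mu p$, $d^{\infty}=\nu p$ with $r(\mu)=r(\nu)=s(p)$, the common tail $p$ is $c_i^{\infty}$ for a rotation $c_i$ of $c$, and peeling copies of $d$ off the front of $d^{\infty}=\nu c_i^{\infty}$ then identifies $d$ with a rotation of $c$, i.e.\ with the same element of $C$. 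Now suppose $\Ind_{c^{\infty}}(K[t^{|c|},t^{-|c|}](m))\cong_{gr}\Ind_{d^{\infty}}(K[t^{|d|},t^{-|d|}](m'))$ with $c,d\in C$. By Theorem~\ref{main1} we get $[c^{\infty}]=[d^{\infty}]$, hence $c=d$ and $c^{\infty}=d^{\infty}$ (they are the common representative in $C$); with $n=|c|$, Theorem~\ref{main1} also yields $K[t^{n},t^{-n}](m)\cong_{gr}K[t^{n},t^{-n}](m')(\alpha)$ for some $\alpha\in\Z_{c^{\infty}c^{\infty}}$. By Lemma~\ref{trivial}, $\Z_{c^{\infty}c^{\infty}}=n\Z$, and since the $\Z$-grading of $K[t^{n},t^{-n}](\ell)$ is supported on the coset $n\Z-\ell$, two such shifts are graded isomorphic exactly when $\ell\equiv\ell'\pmod n$ (in which case multiplication by $t^{\ell'-\ell}$ realises the isomorphism); as $\alpha\in n\Z$, we obtain $m\equiv m'\pmod n$, hence $m=m'$. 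Combined with exhaustiveness, this proves the proposition.

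The one step that is not formal manipulation with Theorem~\ref{main1} is the combinatorics-on-words input in the irredundancy part — that simple closed paths with tail-equivalent infinite powers are rotations of one another. The point to keep in mind everywhere is to track the grading: the invariant distinguishing the modules is the residue $m\bmod|c|$, not $m$ itself, which is why $\Z_{c^{\infty}c^{\infty}}$ must be identified with $|c|\,\Z$ rather than with $\Z$.
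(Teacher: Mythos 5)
Your proof is correct and follows essentially the same route as the paper: both rest on Theorem~\ref{main1}, the identification $K(\CG_E)_{c^\infty}\cong_{gr}K[t^{|c|},t^{-|c|}]$ with $\Gamma_{c^\infty c^\infty}=|c|\Z$, and the classification of graded simple modules over $K[t^{n},t^{-n}]$ as its shifts modulo $n$. The only difference is that you spell out the word-combinatorics step (tail-equivalent infinite powers of simple closed paths are rotations of one another), which the paper merely records as an observation.
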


\begin{proof} 
Observe that the simple closed paths in $C$ are not tail-equivalent to each other, so the corresponding induced modules are non-isomorphic. Let $c \in C$ with $|c| = n$. It is clear that $K[t^n, t^{-n}] \cong_{gr} K[t^n, t^{-n}](m)$ as $K(\CG_E)_{c^\infty}$-modules if and only if $m \in n\Z$. Since $(\CG_E)_{c^\infty} = \{(c^{\infty}, kn, c^{\infty}) \mid k\in \Z\}$, Theorem~\ref{main1} implies that $\Ind_{c^\infty}(K[t^n, t^{-n}](m)) \cong_{gr} \Ind_{c^\infty}(K[t^n, t^{-n}](m'))$ if and only if $n \mid (m - m')$. Hence the graded modules in the given set are pairwise non-isomorphic. Moreover, for any rational path $x$, we have $x \sim_k c^{\infty}$ for some $c \in C, \, k \in \Z$ and $K(\CG_E)_x \cong_{gr} K(\CG_E)_{c^\infty} \cong_{gr} K[t^{|c|}, t^{-|c|}]$ as graded algebras. Then we obtain
$$
\Ind_x(K(\CG_E)_x) \cong_{gr} \Ind_x(K[t^{|c|}, t^{-|c|}]) \cong_{gr} \Ind_{c^\infty}(K[t^{|c|}, t^{-|c|}](k)).
$$
Hence the proposition follows.
\end{proof}

We now prove that if $c$ is a cycle without exits, then $\Ind_{c^\infty}(K(\CG_E)_{c^\infty})$ is graded isomorphic to the graded module $N_{vc}$ constructed in \cite{HR}. Given a cycle without exits based at a vertex $v$, $N_{vc}$ is essentially the $K$-vector space with the basis $\{\mu\nu^* \mid \mu, \nu \in F(E), r(\mu) = r(\nu), s(\nu) = v\}$ and the action of $L_K(E)$ on $N_{vc}$ is defined in the same way for $V_{[x]}$ (with additional assumption that $e^*.v = \delta_{s(e),v}v$). Each basis element $\mu\nu^*$ is defined to be homogeneous of degree $|\mu| - |\nu|$.

\begin{proposition}\label{nvc}
Let $c$ be a cycle without exits based at a vertex $v$. Then $\Ind_{c^\infty}(K(\CG_E)_{c^\infty}) \cong_{gr} N_{vc}$ as graded simple $L_K(E)$-modules.
\end{proposition}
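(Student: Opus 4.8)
The plan is to reduce the statement to producing a graded $L_K(E)$-isomorphism $KL_{c^\infty}\cong_{gr}N_{vc}$. First, exactly as in the proof of Proposition~\ref{triv}, one has $\Ind_{c^\infty}(K(\CG_E)_{c^\infty})=KL_{c^\infty}\otimes_{K(\CG_E)_{c^\infty}}K(\CG_E)_{c^\infty}\cong_{gr}KL_{c^\infty}$ as graded $L_K(E)$-modules via $(y,k,c^\infty)\otimes\lambda\mapsto\lambda(y,k,c^\infty)$. Next I would unwind what $L_{c^\infty}$ looks like using that $c=e_1\cdots e_n$ has no exits: every finite path $\nu$ with $s(\nu)=v$ is an initial subpath $\nu=c^{l}e_1\cdots e_i$ of $c^\infty$ (with $0\le i\le n-1$), so $c^\infty=\nu c_i^\infty$ and $r(\nu)=s(c_i)$ is a cycle vertex. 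Consequently the orbit $[c^\infty]$ consists precisely of the paths $\mu c_i^\infty$ with $\mu\in F(E)$, $r(\mu)=s(c_i)$, and by Lemma~\ref{action} every element of $L_{c^\infty}$ is of the form $\mu\nu^*\!\cdot\!(c^\infty,0,c^\infty)=(\mu c_i^\infty,\,|\mu|-|\nu|,\,c^\infty)$ for such a pair $(\mu,\nu)$; in particular $KL_{c^\infty}$ is generated as an $L_K(E)$-module by $(c^\infty,0,c^\infty)$ (cf. \cite[Corollary 7.11]{S1}).

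Then I would define $\Phi:N_{vc}\to KL_{c^\infty}$ on basis elements by $\Phi(\mu\nu^*)=\mu\nu^*\!\cdot\!(c^\infty,0,c^\infty)$, where on the right $\mu,\nu^*$ are regarded as elements of $L_K(E)$ acting on $(c^\infty,0,c^\infty)\in KL_{c^\infty}$. Since $\mu\nu^*$ has degree $|\mu|-|\nu|$ in $N_{vc}$ and $\mu\nu^*\!\cdot\!(c^\infty,0,c^\infty)$ lies in $KL_{c^\infty,\,|\mu|-|\nu|}$, the map $\Phi$ preserves the grading, and $\Phi(v)=(c^\infty,0,c^\infty)\neq 0$, so $\Phi\neq 0$. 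The substantive point is that $\Phi$ is $L_K(E)$-linear, equivalently that the above assignment is compatible with the relations defining $N_{vc}$. One checks this on the generators $w\in E^0$ and $e,e^*\in E^1$, comparing the action on $N_{vc}$ (which is modelled on that of $V_{[x]}$, together with the extra convention $e^*.v=\delta_{s(e),v}v$) with the action on $KL_{c^\infty}$ given by Lemma~\ref{action}. The cases of $w$ and $e$ (both ``prepend''), and of $e^*$ applied to a basis element $\mu\nu^*$ with $|\mu|>0$ (both ``strip the leading edge''), are routine. The one genuinely new case is $e^*$ applied to $\nu^*$ (that is, $\mu=r(\nu)$): here the hypothesis that $c$ has no exits is used to conclude that $r(\nu)=s(c_i)$ emits only $e_{i+1}$, so $e^*.\nu^*=0$ unless $e=e_{i+1}$, in which case $e_{i+1}^*.\nu^*=(\nu e_{i+1})^*$; on the groupoid side $e_{i+1}^*\!\cdot\!(c_i^\infty,-|\nu|,c^\infty)=(c_{i+1}^\infty,-|\nu|-1,c^\infty)=\Phi((\nu e_{i+1})^*)$, and the two agree.

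Finally I would conclude that $\Phi$ is bijective. One route is direct: $\Phi$ is surjective because $KL_{c^\infty}$ is generated by $(c^\infty,0,c^\infty)=\Phi(v)$ and $\Phi$ is $L_K(E)$-linear, and it is injective by matching reduced basis elements $\mu\nu^*$ of $N_{vc}$ with triples $(\mu c_i^\infty,|\mu|-|\nu|,c^\infty)$ via uniqueness arguments of the kind used in Lemmas~\ref{lem_well} and~\ref{twist_iso} (again using that $c$ has no exits). A shorter route is to note that both modules are graded simple, so the nonzero graded homomorphism $\Phi$ is automatically an isomorphism: $N_{vc}$ is graded simple by \cite{HR}, while $KL_{c^\infty}=\Ind_{c^\infty}(K(\CG_E)_{c^\infty})$ is graded simple, as already observed before Proposition~\ref{rational} (since the only nonzero graded ideal of $K(\CG_E)_{c^\infty}\cong_{gr}K[t^{|c|},t^{-|c|}]$ is itself, so Proposition~\ref{ind_simple} applies). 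I expect the main obstacle to be the verification in the previous paragraph, namely reconciling the ad hoc rule $e^*.v=\delta_{s(e),v}v$ in the definition of $N_{vc}$ with the groupoid action on $KL_{c^\infty}$; everything else is bookkeeping with paths that eventually run around the cycle $c$.
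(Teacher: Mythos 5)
Your proposal is correct, and at its core it is the same isomorphism as in the paper: your map $\Phi(\mu\nu^*)=\mu\nu^*\cdot(c^\infty,0,c^\infty)=(\mu c_i^\infty,|\mu|-|\nu|,c^\infty)$ is precisely the paper's map $\psi\colon N_{vc}\to KL_{c^\infty}$, $\psi(\mu\nu^*)=(\mu\nu^*c^{|\nu|}c^\infty,|\mu|-|\nu|,c^\infty)$, just written intrinsically. The differences are in the surrounding bookkeeping. The paper constructs both $\varphi\colon KL_{c^\infty}\to N_{vc}$ and its explicit inverse $\psi$, checks well-definedness of $\varphi$ as in claim (1) of Proposition~\ref{twist_iso} (this is where ``no exits'' and (CK2) enter, mirroring your reconciliation of the $e^*$-action on $\nu^*$), and then verifies $L_K(E)$-linearity not generator by generator but by the single identity $\varphi(\mu\nu^*(c^\infty,0,c^\infty))=\mu\nu^*\varphi((c^\infty,0,c^\infty))$ together with the trick from Proposition~\ref{triv} that $\alpha\beta^*\mu\nu^*$ is a linear combination of monomials $\gamma\eta^*$; your generator-by-generator check is a legitimate alternative of the same length, and your crucial case ($e^*$ acting when $\mu$ is a vertex) is exactly where the paper's well-definedness check hides the same use of (CK2). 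The more substantive divergence is at the end: your preferred route (b) concludes bijectivity from the fact that a nonzero graded map between graded simple modules is an isomorphism, taking graded simplicity of $N_{vc}$ from \cite{HR} and graded simplicity of $\Ind_{c^\infty}(K(\CG_E)_{c^\infty})$ from Proposition~\ref{ind_simple} plus the observation preceding Proposition~\ref{rational}; this is valid but makes \cite{HR}'s simplicity result an input, whereas the paper proves bijectivity directly (explicit inverse) and then obtains graded simplicity of $N_{vc}$ as a byproduct of the isomorphism, keeping the proposition self-contained. Your route (a) (surjectivity from cyclicity of $KL_{c^\infty}$, injectivity by matching basis elements) is essentially the paper's argument, with the uniqueness step you gesture at being exactly the paper's well-definedness computation.
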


\begin{proof}
Since $\Ind_{c^\infty}(K(\CG_E)_{c^\infty}) = KL_{c^{\infty}} \otimes_{K(\CG_E)_{c^\infty}} K(\CG_E)_{c^\infty} \cong_{gr} KL_{c^\infty}$, we need to show that $KL_{c^\infty} \cong_{gr} N_{vc}$.

Consider the $K$-linear map $\varphi: KL_{c^\infty} \longrightarrow N_{vc}$ given by $\varphi(y, k, c^{\infty}) = \mu\nu^*$, where $\mu,\nu\in F(E)$ are such that $y=\mu c_i^{\infty}, c^{\infty} = \nu c_i^{\infty}, |\mu|-|\nu| = k$. Similar to the claim (1) in the proof of Proposition
\ref{twist_iso}, we see that $\varphi$ is well-defined. Here we note that $c$ has no exits and use the relation (CK2) in the definition of $L_K(E)$.

We also define the $K$-linear map $\psi: N_{vc} \longrightarrow KL_{c^\infty}$ given by $\psi(\mu\nu^*) = (\mu\nu^*c^m c^{\infty}, |\mu| - |\nu|, c^m c^{\infty})$, where $m = |\nu|$. Note that $\mu\nu^*c^m \in F(E)$. It is a simple matter to verify that $\varphi$ and $\psi$ are inverses of each other; moreover, they preserve the gradings on $KL_{c^\infty}$ and $N_{vc}$. As in the proof of Proposition~\ref{triv}, to show that $\varphi$ is a $L_K(E)$-homomorphism it suffices to check that
\begin{equation} \label{equal}
\varphi(\mu\nu^*(c^{\infty}, 0, c^{\infty})) = \mu\nu^*\varphi((c^{\infty}, 0, c^{\infty}))
\end{equation}
for all $\mu, \nu \in F(E)$ with $r(\mu) = r(\nu)$. If $s(\nu) = v$, then $\nu$ must be an initial subpath of $c^{\infty}$ as $c$ has no exits and so both sides of (\ref{equal}) equal $\mu\nu^*$. If $s(\nu) \neq v$, both sides are zero. Hence $\Ind_{c^\infty}(K(\CG_E)_{c^\infty}) \cong_{gr} N_{vc}$. Since $K(\CG_E)_{c^{\infty}}$ is graded simple over itself, the induced module (and hence $N_{vc}$) is also graded simple.
\end{proof}

\begin{remark} \label{ind2}
It can be seen from the definition that $N_{vc} = L_K(E)v$. Let $u$ be a Laurent vertex, i.e., $T_E(u)$ consists of vertices on a single path $\mu c$, where $\mu$ is a path without bifurcation starting at $u$ and $c$ is a cycle without exits based at $v = r(\mu)$. Then $L_K(E)v \cong_{gr} L_K(E)u(-|\mu|)$ by \cite[Lemma 2.2]{HR}. Moreover, it is proved that any minimal graded left ideal of $L_K(E)$ is graded isomorphic to $L_K(E)u(m)$, where $u$ is a line point or a Laurent vertex and $m \in \Z$ \cite[Proposition 2.9]{HR}. Therefore, Propositions~\ref{triv} and \ref{nvc} show that all minimal graded left ideal of $L_K(E)$ are graded isomorphic to induced graded simple modules (cf. Remark~\ref{ind1}).
\end{remark}

In summary, combining Theorem~\ref{main1} with Corollary~\ref{classify} and Proposition~\ref{rational}, we are able to classify all spectral graded simple modules over Leavitt path algebras. 

\begin{theorem} \label{main2}
Let $E$ be an arbitrary graph and $K$ an arbitrary field. Let $C$ be the set of all simple closed paths, where each simple closed path and all its rotations are counted as one. Let $D$ be a set containing exactly one irrational path from each tail-equivalence class of irrational paths, and containing all singular vertices in $E$. Then the set 
$$
\{V_{[x]}(n) \mid x \in D, n \in \Z\} \; \cup \; \{\Ind_{c^\infty}(K[t^{|c|}, t^{-|c|}])(m) \mid c \in C, \, 0 \leq m \leq |c| - 1\}
$$
forms a full list of pairwise non-isomorphic spectral graded simple $L_K(E)$-modules.

In particular, up to graded isomorphism, all finite-dimensional graded simple modules over $L_K(E)$ are the modules $V_{[x]}(n)$, where $n \in \Z$ and $x$ is a singular vertex such that the set $\{\mu \in F(E) \mid r(\mu) = x\}$ is finite.
\end{theorem}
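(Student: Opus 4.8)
The plan is to assemble Theorem~\ref{main1}, Corollary~\ref{classify} and Proposition~\ref{rational} through the graded isomorphism $\pi\colon L_K(E)\cong_{gr} A_K(\CG_E)$ of \eqref{iso}, partitioning the unit space $\CG_E^{(0)}=\partial E$ into rational and non-rational units. First I would invoke Theorem~\ref{main1}: every spectral graded simple $L_K(E)$-module is graded isomorphic to $\Ind_x(N)$ for some $x\in\partial E$ and some graded simple $K(\CG_E)_x$-module $N$, and conversely every such $\Ind_x(N)$ is spectral graded simple, with the isomorphism criterion as stated there. It then suffices to run through the two types of unit.

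If $x$ is not a rational path, Lemma~\ref{trivial} gives $K(\CG_E)_x\cong_{gr} K$ with trivial grading, so $N\cong_{gr} K(n)$ for a unique $n\in\Z$; Corollary~\ref{classify} then identifies the resulting modules as exactly the pairwise non-isomorphic $V_{[x]}(n)$, $x\in D$, $n\in\Z$. If $x$ is rational, Proposition~\ref{rational} identifies the resulting modules as exactly the pairwise non-isomorphic $\Ind_{c^\infty}(K[t^{|c|},t^{-|c|}](m))$, $c\in C$, $0\le m\le|c|-1$; since $\Ind_{c^\infty}$ commutes with the shift functor these equal $\Ind_{c^\infty}(K[t^{|c|},t^{-|c|}])(m)$. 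Finally the two families are disjoint: a non-rational and a rational unit lie in distinct orbits, so Proposition~\ref{not_iso} forbids an isomorphism between them (equivalently, by Corollary~\ref{cor1} the first family consists of \emph{simple} modules, whereas the second consists of modules that are not simple since $K[t^{|c|},t^{-|c|}]$ is not simple over itself). This yields the full list.

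For the finite-dimensional assertion I would apply the last clause of Theorem~\ref{main1}: a finite-dimensional graded simple module corresponds to a pair $(x,N)$ with $x$ of finite index, i.e.\ $\CO_x=[x]$ finite, and $N$ a finite-dimensional graded simple $K(\CG_E)_x$-module. If $x$ is rational, $K(\CG_E)_x\cong_{gr} K[t^{|c|},t^{-|c|}]$ and its graded simple modules are the infinite-dimensional shifts $K[t^{|c|},t^{-|c|}](m)$, so this case contributes nothing. If $x$ is non-rational, then $N\cong_{gr}K(n)$ is one-dimensional, and the only remaining constraint is that $[x]$ be finite. Here I use the orbit analysis: if $x=e_1e_2\cdots$ is an irrational \emph{infinite} path, the tails $e_ke_{k+1}\cdots$ all lie in $[x]$ and are pairwise distinct (otherwise $x$ is eventually periodic, hence rational), so $[x]$ is infinite; thus only the case of a finite path $x$ with singular range $v=r(x)$ survives, and by Remark~\ref{sing} then $[x]=[v]$ is the set of all finite paths ending at $v$, which is finite precisely when $\{\mu\in F(E)\mid r(\mu)=v\}$ is finite. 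Since $V_{[x]}=V_{[v]}$, the finite-dimensional graded simple modules are exactly the $V_{[v]}(n)$ with $v$ singular, $\{\mu\in F(E)\mid r(\mu)=v\}$ finite and $n\in\Z$.

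The bulk of the work is bookkeeping rather than new content; the point requiring a little care is the disjointness and completeness interface between the two families — in particular confirming that rational units yield no finite-dimensional member and that irrational infinite paths never have finite orbit — but both reduce to one-line arguments (distinct orbits and eventual periodicity, respectively) once the isotropy groups of Lemma~\ref{trivial} are in hand.
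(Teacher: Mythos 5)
Your proposal is correct and follows essentially the same route as the paper: the full list is obtained by combining Theorem~\ref{main1} with Corollary~\ref{classify} and Proposition~\ref{rational}, and the finite-dimensional assertion by observing that rational units only yield infinite-dimensional graded simple isotropy modules while irrational infinite paths have infinite tail-equivalence class, leaving singular vertices with finitely many incoming finite paths. The extra details you supply (disjointness of the two families via distinct orbits, eventual periodicity when tails repeat) are exactly the points the paper leaves implicit.
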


\begin{proof}
We only need to prove the last statement. By Theorem~\ref{main1}, the finite-dimensional graded simple $L_K(E)$-modules are of the form $V_{[x]}(n)$, where $x \in D$ has finite tail-equivalence class. If $x$ is a singular vertex, then this means that there are only finitely many paths in $F(E)$ ending at $x$. On the other hand, any irrational path $x$ has infinite tail-equivalence class (which contains all truncations of $x$ obtained by cutting off various initial subpaths).
\end{proof}

To illustrate Theorem~\ref{main2}, we need some terminology from \cite{KO}. If $c$ is a cycle based at a vertex $v$, or if a vertex $v$ is a sink, then its {\it predecessors} are the vertices $w$ such that there is a finite path from $w$ to $v$. A sink $v$ or a cycle $c$ is called {\it maximal} if there is no finite path from any other cycle $c'$ to it, i.e., there is no finite path from $s(c')$ to $v$ or to $s(c)$, respectively (in particular, a maximal cycle is disjoint from all other cycles). The following result is an immediate consequence of Theorem~\ref{main2}.

\begin{corollary} \label{row1}
Let $E$ be a row-finite graph. Up to graded isomorphism, all finite-dimensional graded simple $L_K(E)$-modules are $V_{[x]}(n)$, where $n \in \Z$ and $x$ is a maximal sink with finitely many predecessors.
\end{corollary}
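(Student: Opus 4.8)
The plan is to derive this directly from Theorem~\ref{main2} by translating its characterisation into the terminology of \cite{KO}. By Theorem~\ref{main2}, up to graded isomorphism the finite-dimensional graded simple $L_K(E)$-modules are exactly the modules $V_{[x]}(n)$ with $n \in \Z$ and $x$ a singular vertex for which the set $P_x = \{\mu \in F(E) \mid r(\mu) = x\}$ is finite. Since a row-finite graph has no infinite emitters, a vertex of $E$ is singular precisely when it is a sink. Thus it only remains to prove, for a sink $x$, that $P_x$ is finite if and only if $x$ is a maximal sink with finitely many predecessors.

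For the ``only if'' direction I would argue by contraposition. If $x$ has infinitely many predecessors, then, since every predecessor $w$ is the source of at least one path in $P_x$ and paths with distinct sources are distinct, $P_x$ is infinite. If instead $x$ fails to be maximal, there is a cycle $c'$ together with a finite path $\alpha$ from $s(c')$ to $x$; then $c'^{\,m}\alpha \in P_x$ for every $m \geq 0$, and these paths have pairwise distinct lengths, so again $P_x$ is infinite. Hence $P_x$ finite forces $x$ to be a maximal sink with finitely many predecessors.

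For the converse — the only step that genuinely uses row-finiteness — let $W$ be the set consisting of $x$ together with all its predecessors, which is finite by hypothesis. Every edge occurring in a path of $P_x$ has its source in $W$, so it lies in $\bigcup_{w \in W} s^{-1}(w)$, a finite set because $E$ is row-finite. Moreover no element of $P_x$ has length $\geq |W|$: such a path would visit more than $|W|$ vertices, all lying in $W$, and hence repeat one, producing a closed subpath and therefore a cycle all of whose vertices lie in $W$; that cycle would reach $x$, contradicting maximality. Thus $P_x$ consists of paths of bounded length drawn from a finite edge set, so $P_x$ is finite. Combining the two implications with the reduction of the first paragraph yields the corollary.

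The routine parts are the unwinding of Theorem~\ref{main2} and the contrapositive argument; the one point that requires care is the converse, where one must observe that row-finiteness is exactly what makes the edge set supported on the predecessors of $x$ finite, and then invoke the pigeonhole bound on path lengths to exclude long paths.
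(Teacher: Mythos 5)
Your proposal is correct and follows exactly the route the paper intends: the paper records Corollary~\ref{row1} as an immediate consequence of Theorem~\ref{main2}, and your argument simply spells out the routine translation (singular $=$ sink in a row-finite graph, and the pigeonhole/row-finiteness argument showing that $\{\mu \in F(E) \mid r(\mu)=x\}$ is finite precisely when the sink $x$ is maximal with finitely many predecessors). No gaps; this is the same approach, just with the omitted combinatorial details written out.
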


Specialising Theorem~\ref{main1} to the trivial grading (i.e., \cite[Theorem 7.26]{S1}) together with Corollaries~\ref{cor1}, \ref{cor2}, \ref{cor4} and Lemma~\ref{non_graded} yield the following classification of spectral simple modules over Leavitt path algebras.

\begin{theorem}\label{main3}
Let $E$ be an arbitrary graph and $K$ an arbitrary field. Let $C$ be the set of all simple closed paths, where each simple closed path and all its rotations are counted as one. Let $D$ be a set containing exactly one irrational path from each tail-equivalence class of irrational paths, and containing all singular vertices in $E$. Then the set 
\begin{align*}
\{V_{[x]} \mid x \in D\} \; \cup \; \{V_{[c^\infty]}^f \mid c \in C, \, f \text{ is a monic irreducible polynomial in $K[t]$}, f \neq t\}
\end{align*}
forms a full list of pairwise non-isomorphic spectral simple $L_K(E)$-modules.

In particular, up to isomorphism, all finite-dimensional simple modules over $L_K(E)$ are:
\begin{itemize}
\item [(i)] $V_{[x]}$, where $x$ is a singular vertex such that $\{\mu \in F(E) \mid r(\mu) = x\}$ is finite.
\item [(ii)] $V_{[c^\infty]}^f$, where $f$ is a monic irreducible polynomial in $K[t]$, $f \neq t$, and $c \in C$ is a maximal cycle such that $\{\mu \in F(E) \mid r(\mu) = s(c)\}$ is finite.
\end{itemize} 
\end{theorem}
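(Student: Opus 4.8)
The plan is to read off the classification from the non-graded case of Theorem~\ref{main1}, i.e.\ Steinberg's \cite[Theorem 7.26]{S1}, applied to the Hausdorff ample groupoid $\CG_E$ and transported to $L_K(E)$ along the isomorphism $\pi$ of \eqref{iso}. By that theorem every spectral simple $L_K(E)$-module (equivalently $A_K(\CG_E)$-module) is isomorphic to $\Ind_x(N)$ for some $x\in\partial E=\CG_E^{(0)}$ and some simple $K(\CG_E)_x$-module $N$, and conversely every such $\Ind_x(N)$ is spectral simple (its restriction to $x$ is $N\neq 0$); moreover $\Ind_x(N)\cong\Ind_y(N')$ exactly when $\CO_x=\CO_y$, equivalently $[x]=[y]$, and $N$ corresponds to $N'$ under the orbit isomorphism of the isotropy group algebras, which is canonical here because those groups are abelian (Lemma~\ref{trivial}). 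So it suffices to choose one unit per orbit, list the simple modules over its isotropy group algebra, and identify the resulting induced modules with those in the statement; the displayed representatives are precisely $D$ (for non-rational units) together with $\{c^{\infty}:c\in C\}$ (for rational units).

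I would then split along Lemma~\ref{trivial}. If $x$ is not rational, $K(\CG_E)_x\cong K$ has unique simple module $K$, and $\Ind_x(K)\cong_{gr}V_{[x]}$ by Proposition~\ref{triv}; Corollary~\ref{cor1} records that $V_{[x]}$ is simple and that $V_{[x]}\cong V_{[y]}$ iff $[x]=[y]$. That $D$ meets every non-rational orbit once follows from Remark~\ref{sing}: a finite path $\mu\in\partial E$ is tail-equivalent to the singular vertex $r(\mu)$ and to nothing else singular or irrational, while the irrational members of $D$ were chosen one per class. If $x$ is rational I may replace $x$ by $c^{\infty}$ for a simple closed path $c$, and take $c\in C$: the rotations satisfy $c_i^{\infty}\sim c^{\infty}$, and if $c^{\infty}\sim c'^{\infty}$ then a common tail forces $c_i^{\infty}=c_j'^{\infty}$, whence $c_i=c_j'$ by simplicity, so $c,c'$ are rotations of one another. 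Now $K(\CG_E)_{c^{\infty}}\cong K[t,t^{-1}]$, whose simple modules are the $K[t,t^{-1}]/(f)$ for $f$ monic irreducible in $K[t]$; since $t$ is a unit it acts invertibly, forcing $f(0)\neq 0$, i.e.\ $f\neq t$. By Corollary~\ref{cor4}, $\Ind_{c^{\infty}}(K[t,t^{-1}]/(f))\cong V_{[c^{\infty}]}^{f}$; Corollary~\ref{cor2} and Lemma~\ref{non_graded} moreover show that the twisted Chen modules $V_{[c^{\infty}]}^{\mathbf a}$, and in particular $V_{[c^{\infty}]}$ itself, already appear in this list. Pairwise non-isomorphism is then immediate: distinct members of $D$ lie in distinct orbits; distinct members of $C$ give non-tail-equivalent $c^{\infty}$; and for fixed $c$, $V_{[c^{\infty}]}^{f}\cong V_{[c^{\infty}]}^{f'}$ iff $K[t,t^{-1}]/(f)\cong K[t,t^{-1}]/(f')$ iff $f=f'$. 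Completeness is the case analysis run in reverse: each $\Ind_x(N)$ above is routed to its representative.

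For the finite-dimensional part I would use that $KL_x$ is free as a right $K(\CG_E)_x$-module on a transversal of $L_x/(\CG_E)_x\cong\CO_x=[x]$, so $\dim_K\Ind_x(N)=|[x]|\cdot\dim_K N$; since $\dim_K N$ is always finite ($1$ in the non-rational case, $\deg f$ in the rational case), $\Ind_x(N)$ is finite-dimensional iff $[x]$ is finite. In the non-rational case an irrational path has infinite class (its shifts $\sigma^{k}(x)$ are pairwise distinct, else $x$ would be eventually periodic, hence rational), while a singular vertex $v$ has $[v]=\{\mu\in F(E):r(\mu)=v\}$ by Remark~\ref{sing}; this gives (i). In the rational case, unwinding finiteness of $[c^{\infty}]$ into a graph condition on $c$ and the finite paths entering the cycle gives (ii), the point being that a cycle that is not maximal, or one fed by infinitely many finite paths, produces an infinite orbit.

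The step I expect to be the real work, as opposed to bookkeeping, is the precise combinatorics underpinning two claims: that tail-equivalence among the three special shapes of path — rotations of a simple closed path, finite paths ending at a singular vertex, and irrational paths — behaves exactly as needed so that the exhibited list is both irredundant and exhaustive; and the graph-theoretic characterization of when the orbit $[c^{\infty}]$ of a rational path is finite. Once these are in hand, the rest is a faithful transcription of \cite[Theorem 7.26]{S1} through \eqref{iso}, Lemma~\ref{trivial}, and Corollaries~\ref{cor1}, \ref{cor2}, \ref{cor4}.
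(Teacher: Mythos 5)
Your proposal is correct and takes essentially the same route as the paper: the paper's proof is precisely the specialisation of Theorem~\ref{main1} to the trivial grading (Steinberg's Theorem 7.26) combined with the identifications of induced modules in Corollaries~\ref{cor1}, \ref{cor2}, \ref{cor4} and Lemma~\ref{non_graded}, with the finite-dimensional case read off from finite orbits. The orbit combinatorics you flag as ``the real work'' (irredundancy of the representatives in $D$ and $C$, and when $[c^\infty]$ is finite) is handled in the paper at the same level of brevity --- only the remark that a non-maximal cycle gives an infinite tail-equivalence class --- so your treatment matches the paper's in both method and detail.
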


It is easy to see that if $c$ is a closed path which is not a maximal cycle, then $c^\infty$ has infinite tail-equivalence class. Therefore, $c$ is required to be a maximal cycle in Theorem~\ref{main3}.

In \cite{AR}, a simple module which is not a Chen module is constructed over a Leavitt path algebra of infinite Gelfand-Kirillov dimension. It follows from Theorem~\ref{main3} that this is a non-spectral simple module.

For a row-finite graph $E$, we obtain a description of finite-dimensional simple $L_K(E)$-modules, cf. \cite{KO}.

\begin{corollary} \label{row2}
Let $E$ be a row-finite graph. Let $C'$ be the set of all maximal cycles with finitely many predecessors, where each maximal cycle and all its rotations are counted as one. Up to isomorphism, all finite-dimensional simple $L_K(E)$-modules are:
\begin{itemize}
\item [(i)] $V_{[x]}$, where $x$ is a maximal sink with finitely many predecessors.
\item [(ii)] $V_{[c^\infty]}^f$, where $f$ is a monic irreducible polynomial in $K[t]$, $f \neq t$, and $c \in C'$.
\end{itemize} 
\end{corollary}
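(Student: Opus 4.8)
The plan is to obtain the corollary from Theorem~\ref{main3} by reinterpreting its two finiteness conditions in the row-finite setting. The first reduction is immediate: a row-finite graph has no infinite emitters, so its singular vertices are exactly its sinks, and case~(i) of Theorem~\ref{main3} already produces the modules $V_{[x]}$ with $x$ a sink (and $[x]=\{\mu\in F(E)\mid r(\mu)=x\}$ by Remark~\ref{sing}). For case~(ii), recall that $V_{[c^\infty]}^f\cong\Ind_{c^\infty}(K[t]/(f))$ by Corollary~\ref{cor4}, so by Theorem~\ref{main1} applied with the trivial grading this module is finite-dimensional precisely when the orbit $[c^\infty]$ is a finite set, the factor $\dim_K K[t]/(f)=\deg f$ being always finite. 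Thus the whole content of the corollary is the translation of ``$[x]$ is finite'' (for a sink $x$) into ``$x$ is a maximal sink with finitely many predecessors'' and of ``$[c^\infty]$ is finite'' (for a maximal cycle $c$) into ``$c$ is a maximal cycle with finitely many predecessors''.

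For the sink $x$: if $[x]$ is finite, then the sources of the paths in $[x]$ are precisely the predecessors of $x$, so there are finitely many of them; and if some predecessor $w$ lay on a cycle, rotating that cycle to be based at $w$ and appending a fixed path $w\to x$ would give infinitely many paths into $x$, so $x$ must be a maximal sink. Conversely, if $x$ is a maximal sink with finitely many predecessors, the full subgraph on these predecessors together with $x$ has finitely many vertices, hence by row-finiteness finitely many edges, and it is acyclic since any cycle in it would be based at a predecessor of $x$; a finite acyclic graph has only finitely many paths, so $[x]$ is finite. Row-finiteness is genuinely used here: an acyclic graph with one infinite emitter already has infinitely many paths.

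The cycle case is the main obstacle, because --- unlike for a sink --- there are always infinitely many finite paths ending at $s(c)$ (the powers $c^k$), so the argument must be run at the level of the orbit $[c^\infty]$ and must show that winding around $c$ is invisible there. The easy direction: if $c$ has infinitely many predecessors, then for each predecessor $w$ a chosen path $\nu_w\colon w\to s(c)$ gives $\nu_w c^\infty\in[c^\infty]$ with source $w$, so $[c^\infty]$ is infinite; and if $c$ is not maximal, a cycle $c'$ with a path $\mu$ to $s(c)$ gives the infinite family $(c')^k\mu c^\infty\in[c^\infty]$. Hence $[c^\infty]$ finite forces $c$ maximal with finitely many predecessors. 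For the converse I would take $c$ maximal with finite predecessor set $P$ and work in the subgraph $E_P$, which is finite by row-finiteness: maximality forces its only cycles to be rotations of $c$, so there are no chords or multiple edges inside the vertex set $V(c)$ of $c$ and the complement $E_P\setminus V(c)$ is acyclic. Every $p\in[c^\infty]$ is $\alpha c_i^\infty$ for a finite path $\alpha$ in $E_P$ ending on $V(c)$; reading $\alpha$ backwards from that endpoint it either stays inside $V(c)$, giving one of the $|c|$ paths $c_i^\infty$, or it first leaves $V(c)$ through an entry edge after which it runs through the finite acyclic graph $E_P\setminus V(c)$, and since inserting extra loops around $c$ leaves $\alpha c_i^\infty$ unchanged, $p$ depends only on this bounded ``first-entry'' prefix, of which there are finitely many. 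Therefore $[c^\infty]$ is finite, and assembling the two equivalences with Theorem~\ref{main3} gives the corollary.
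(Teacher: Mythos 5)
Your proposal is correct, and it supplies an argument where the paper offers essentially none: Corollary~\ref{row2} is presented as an immediate specialisation of Theorem~\ref{main3} to row-finite graphs (singular vertices become sinks, and the finiteness conditions are read off as ``maximal with finitely many predecessors''), with no written proof. What you do differently is to make that translation explicit: using row-finiteness you prove the two equivalences ``$\{\mu\in F(E)\mid r(\mu)=x\}$ is finite iff $x$ is a maximal sink with finitely many predecessors'' and ``$[c^\infty]$ is finite iff $c$ is a maximal cycle with finitely many predecessors''. In the cycle case you also, rightly, return to Corollary~\ref{cor4} and the finite-index criterion of Theorem~\ref{main1} (with trivial grading) and measure finite-dimensionality by finiteness of the orbit $[c^\infty]$, rather than by the condition appearing verbatim in Theorem~\ref{main3}(ii) --- the set $\{\mu\in F(E)\mid r(\mu)=s(c)\}$ is never finite for a cycle $c$, as it contains all powers $c^k$ --- so your route is in effect a clarification of the statement being specialised, which is a real gain over the paper's ``immediate'' citation. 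The one place where your write-up is terser than it should be is the converse of the cycle case: the claim that, reading $\alpha$ backwards, everything before the first entry edge into $V(c)$ stays in $E_P\setminus V(c)$ needs the observation that a path which exits $V(c)$ and later re-enters it would, together with an arc of $c$, yield a cycle in $E_P$ that is not a rotation of $c$ (either a cycle lying entirely outside $V(c)$ or one using the exit edge), contradicting maximality; this does follow from your stated claim that the only cycles in $E_P$ are rotations of $c$, but the short justification should be spelled out.
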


\section*{Acknowledgements}
Part of this work was done while the authors were visiting Vietnam Institute for Advanced Study in Mathematics (VIASM). We would like to thank VIASM for their support and hospitality. The second-named author was also partially supported by Vietnam National Foundation for Science and Technology Development (NAFOSTED) under grant number 101.04-2018.307 and by International Centre for Research and Postgraduate Training in Mathematics (ICRTM), Institute of Mathematics, VAST under grant number ICRTM01-2020.05.

\end{document}